\DeclareMathOperator*{\esssup}{ess\,sup}
\newtheorem{theorem}{Theorem}
\newtheorem{lemma}[theorem]{Lemma}
\newtheorem{corollary}[theorem]{Corollary}
\newtheorem{definition}[theorem]{Definition}
\newtheorem{remark}[theorem]{Remark}
\theoremstyle{plain}
\begin{document}

\title[The Riemann-Liouville fractional integral]{The Riemann-Liouville fractional integral in Bochner-Lebesgue spaces I}


\author[P. M. Carvalho-Neto]{Paulo M. Carvalho-Neto}
\address[Paulo M. de Carvalho Neto]{Departamento de Matem\'atica, Centro de Ci\^{e}ncias F\'{\i}sicas e Matem\'{a}ticas, Universidade Federal de Santa Catarina, Florian\'{o}polis - SC, Brazil}
\email[]{paulo.carvalho@ufsc.br}
\author[R. Fehlberg J\'{u}nior]{Renato Fehlberg J\'{u}nior}
\address[Renato Fehlberg J\'{u}nior]{Departamento de Matem\'atica, Universidade Federal do Esp\'{\i}rito Santo, Vit\'{o}ria - ES, Brazil}
\email[]{fjrenato@yahoo.com.br}


\subjclass[2010]{26A33, 47G10, 46B50}


\keywords{Riemann-Liouville fractional integral, Bochner-Lebesgue space, operator theory, semigroup theory.}


\begin{abstract}
In this paper we study the Riemann-Liouville fractional integral of order $\alpha>0$ as a linear operator from $L^p(I,X)$ into itself, when $1\leq p\leq \infty$, $I=[t_0,t_1]$ (or $I=[t_0,\infty)$) and $X$ is a Banach space. In particular, when $I=[t_0,t_1]$, we obtain necessary and sufficient conditions to ensure its compactness. We also prove that Riemann-Liouville fractional integral defines a $C_0-$semigroup but does not defines a uniformly continuous semigroup. We close this study by presenting lower and higher bounds to the norm of this operator.
\end{abstract}

\maketitle

\section{Introduction}

From a historical point of view, the fundamental structures of fractional calculus have its origin in an inquiry about the possibility to extend the notion of derivative. In fact, it was Leibniz the first to introduce the idea of a symbolic notation to the nth derivative, namely $d^n/dx^n=D^n$, where $n$ was considered a positive integer. It was L'Hospital that in 1695 directly questioned Leibniz about the possibility to consider $n=1/2$ (see \cite{Leib1,Ro2} for details). Perhaps, like B. Ross pointed out in \cite{Ro1}, ``it was naive play with symbols that prompted L'Hospital to ask Leibniz about the possibility that $n$ be a fraction'', however this questioning was the responsible for the consolidation of this theory in mathematics.

After its conception as an area of expertise, the fractional calculus have gone through a huge paradigm shift, going from being a forgotten subject to a trending one; today we may cite several papers that have been published in important mathematical journals: see \cite{AlCaVa1,CarFe1,Ch1,DiKeSiZa1, DoKi1,DoKi2, GiNa1, KiLiLiYa1,KiKiLi1,NaRy1,Po1} as a sample of this literature.

Nevertheless, besides the several open questions regarding this theory, this work intends to give attention to a more systematic study of the fractional integral itself. To be more precise, let us describe the highlights of each section of this manuscript.

In Section \ref{sec2} we introduce the theory of Bochner measurable and integrable functions, and present the Riemann-Lioville fractional integral of order $\alpha>0$. Then we give necessary and sufficient conditions to ensure the existence of Riemann-Liouville fractional integral of a Bochner integrable function.

In Section \ref{sec3} we prove our first main result, which presents necessary and sufficient conditions for the compactness of the Riemann-Liouville fractional integral of order $\alpha>0$, as an operator from $L^p(I;X)$ into itself. Besides that, we also study the continuity with respect to the order of integration. We end this section using the theory of semigroups of linear operators to prove the absence of the uniform continuity when $\alpha=0$.

We dedicate Section \ref{sec4} to present lower and higher bounds to this norm, proving that when $p=1$ or $p=\infty$, we can obtain an exact value.

Since there is no unifying source that contains the classical results we use in this manuscript, we dedicate Section \ref{sec5} to summarize them.

\section{The Riemann-Liouville fractional integral}
\label{sec2}

In order to study the Riemann-Liouville fractional integral in a more general context, we first present a few classical definitions and results about the Bochner measurable and integrable functions. We recall that vector-valued functions from a measurable domain of $\mathbb{R}^n$ into a Banach space, is a topic widely studied in the literature; some good references are W. Arendt et al. \cite{ArBaHiNe1}, U. Diestel \cite{Di1}, J. Mikusi\'nski \cite{Mik1}, J. Pettis \cite{Pe1} and R. E. Showalter \cite{Sho1}.

It worths to emphasize that throughout this manuscript we assume that $t_0<t_1$ are fixed real numbers, $I$ is a non-empty subset of $\mathbb{R}$ and $X$ always denotes a Banach vector space.

\begin{definition}\label{firstdef} Let $n\in\mathbb{N}^*:=\mathbb{N}\setminus{\{0\}}$, $S$ a subset of $\mathbb{R}^n$, $\Sigma$ a $\sigma-$algebra on the set $S$ and $\mu$ a measure in $(S,\Sigma)$. We call the triplet $(S,\Sigma,\mu)$ a measure space.\vspace*{0.2cm}
\begin{itemize}
\item[(i)] A step function $f:S\rightarrow X$ (i.e., a finite-valued function) is Bochner measurable if $f^{-1}(\{x\})\in \Sigma$, for every $x\in X$. If it also holds that $\mu\big(f^{-1}(\{x\})\big)<\infty$, for every $x\not=0$, we say that $f$ is integrable in $S$ and we define
    $$\int_S{f}\,d\mu=\sum_{x\in X}x\mu\big(f^{-1}(x)\big).$$\vspace*{0.1cm}
\item[(ii)] A function $f:S\rightarrow X$ is said to be Bochner measurable if there is a sequence $\{f_n(s)\}_{n=1}^\infty$ of Bochner measurable step functions for which $f_n(s)\rightarrow f(s)$ in the topology of $X$, for almost every $s\in S$.\vspace*{0.2cm}
\item[(iii)] A function $f:S\rightarrow X$ is Bochner integrable if there is a sequence $\{f_n(s)\}_{n=1}^\infty$ of integrable step functions such that
    $$\lim_{n\rightarrow\infty}{\int_S{\big\|f_n(s)-f(s)\big\|_X}\,d\mu}=0$$
    and each integrand belongs to $L^1(S;\mu)$. In this case, $\int_S{f_n(s)}\,d\mu$ converges in $X$ to a limit that is independent of the sequence of step functions, which we denote by $\int_S{f(s)}\,d\mu$. \vspace*{0.2cm}
\end{itemize}
\end{definition}

\begin{remark}\label{remarknova} We highlight two important results that arise from definition \ref{firstdef}.
\begin{itemize}
\item[(i)] Assume that $I\subset\mathbb{R}$. If $f:I\rightarrow X$ is a Bochner measurable function and $g:\mathbb{R}\rightarrow \mathbb{R}$ is a Lebesgue measurable function, then we deduce that function $\mathbb{R}\times I\ni (t,s)\mapsto g(t-s)f(s)$ is Bochner measurable (see \cite[Chapter XIV - Lemma 1.1]{Mik1}). Just note that  there are sequences of step functions  $\{f_n(s)\}_{n=1}^\infty$ and  $\{g_n(s)\}_{n=1}^\infty$ such that $f_n(s)\rightarrow f(s)$ (in the topology of $X$), for almost every $s\in I$, and $g_n(t-s)\rightarrow g(t-s)$ (in the topology of $\mathbb{R}$), for almost every $t,s\in \mathbb{R}$, thus
    \begin{multline*}\hspace*{1cm}\big\|g_n(t-s)f_n(s)-g(t-s)f(s)\big\|_X\leq |g_n(t-s)-g(t-s)|\big\|f_n(s)\big\|_X\\+|g(t-s)|\big\|f_n(s)-f(s)\big\|_X.\end{multline*}
    Since $\{f_n(s)\}_{n=1}^\infty$ is limited, for each fixed $s\in I$, the result follows.\vspace*{0.2cm}
\item[(ii)] The second result, proved by Bochner (see \cite[Theorem II.2.2]{Di1} or \cite[Theorem 1.2.1]{Vr1}), connects  part $(ii)$ and $(iii)$ from definition \ref{firstdef} and it allows us to introduce the Bochner-Lebesgue spaces. This result can be stated as: a function $f:I\rightarrow X$ is Bochner integrable if, and only if, $f$ is Bochner measurable and $\|f\|_X\in L^1(I;\mu)$.
\end{itemize}
\end{remark}

From now on we always consider $I\subset\mathbb{R}$ with the induced Lebesgue measure from $\mathbb{R}$. Thus we are able to introduce the classical Bochner-Lebesgue spaces $L^{p}(I;{X})$.

\begin{definition} Consider $1\leq p\leq\infty$.
We use the symbol $L^{p}(I;{X})$ to represent the set of all Bochner measurable functions $f:I\rightarrow X$ for which $\|f\|_X\in L^{p}(I;\mathbb{R})$, where $L^{p}(I;\mathbb{R})$ stands for the classical Lebesgue space. Moreover, $L^{p}(I;{X})$ is a Banach space when considered with the norm
$$\|f\|_{L^p(I;X)}:=\left\{\begin{array}{ll}\bigg[\displaystyle\int_I{\|f(s)\|^p_X}\,ds\bigg]^{1/p},&\textrm{ if }p\in[1,\infty),\vspace*{0.3cm}\\
\esssup_{s\in I}\|f(s)\|_X,&\textrm{ if }p=\infty.\end{array}\right.$$
When $I=[t_0,t_1]$  (or $I=[t_0,\infty)$) almost everywhere, we shall also use the notation $L^p(t_0,t_1;X)$ (or $L^p(t_0,\infty;X)$).\vspace*{0.2cm}
\end{definition}

\begin{remark}\label{remarktestf}
\begin{itemize}
\item[(i)] Several geometric properties of $X$ are lifted to $L^p(I;X)$ (see the paper of Dowling-Hu-Mupasir \cite{DoHuMu1} or the survey paper of Smith \cite{Sm1}). Specifically, it is well-known that strict convexity, uniform convexity, local uniform convexity, midpoint local uniform convexity and reflexivity are all lifted from $X$ to $L^p(I;X)$, when $1 < p < \infty$.
\item[(ii)] We shall also emphasize the existence of test functions in the context of Bochner-Lebesgue spaces. In fact, if $I$ is an open set, we shall call the set of every infinitely differentiable function $\phi:I\rightarrow X$ with compact support by $C^\infty_c(I;X)$, and call each of its elements test functions. It also holds that $C^\infty_c(I;X)$ is a dense subset of $L^p(I;X)$, when $1\leq p\leq\infty$. For more details on this subject, we may refer to \cite{ArBaHiNe1,Scw1,Ze1}.
\end{itemize}
\end{remark}

\subsection{Some Results About RL Fractional Integral}

Now we present the classical fractional integral used in the Bochner-Lebesgue spaces. Henceforth, we assume that $I\subset\mathbb{R}$ always denotes $[t_0,t_1]$ or $[t_0,\infty)$.

\begin{definition}\label{riemannint} For $\alpha\in(0,\infty)$ and $f:I\rightarrow{X}$, the Riemann-Liouville (RL for short) fractional integral of order $\alpha$ at $t_0$ of function $f(t)$ is defined by
\begin{equation}\label{fracinit}J_{t_0,t}^\alpha f(t):=\dfrac{1}{\Gamma(\alpha)}\displaystyle\int_{t_0}^{t}{(t-s)^{\alpha-1}f(s)}\,ds,\end{equation}
for every $t\in I$ such that integral \eqref{fracinit} exists. Above $\Gamma(z)$ denotes the classical Euler's gamma function.
\end{definition}

\begin{remark}
Recall that for $k\in\mathbb{N}$ and $f\in L^1(I;X)$, it holds
$$J_{t_0,t}^k f(t)=\dfrac{1}{(k-1)!}\displaystyle\int_{t_0}^{t}{(t-s)^{k-1}f(s)}\,ds,$$
for every $t\in I$. The above equality is called Cauchy formula for $k-$repeated integration of function $f$. This identity is the notion that induces Definition \ref{riemannint}.
\end{remark}

The next result binds the existence of $J^\alpha_{t_0,t}f(t)$ with the existence of $J^\alpha_{t_0,t}\|f(t)\|_X$.

\begin{lemma}\label{interlemma} Consider $\alpha>0$ and $f:[t_0,t_1]\rightarrow X$ a Bochner measurable function. Then $J^\alpha_{t_0,t}f(t)$ exists for almost every $t\in[t_0,t_1]$ if and only if $J^\alpha_{t_0,t}\|f(t)\|_X$ exists for almost every $t\in[t_0,t_1]$.\vspace*{0.2cm}
\end{lemma}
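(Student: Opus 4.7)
The plan is to reduce the biconditional to a pointwise-in-$t$ application of Bochner's theorem (Remark \ref{remarknova}(ii)). The key observation is that the two conditions are not merely equivalent on sets of full measure; they are \emph{pointwise} equivalent for every $t\in[t_0,t_1]$, and the ``almost everywhere'' statement then follows at once.

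First, I would fix $t\in[t_0,t_1]$ and examine the integrand $s\mapsto (t-s)^{\alpha-1}f(s)$ on $[t_0,t)$. The scalar factor $s\mapsto (t-s)^{\alpha-1}$ is continuous, hence Lebesgue measurable, on this interval, and $f$ is Bochner measurable by hypothesis. Invoking Remark \ref{remarknova}(i) with $g(u)=u^{\alpha-1}\chi_{(0,\infty)}(u)$ shows that the map $(t,s)\mapsto (t-s)^{\alpha-1}f(s)$ is jointly Bochner measurable; in particular, the slice $s\mapsto (t-s)^{\alpha-1}f(s)$ is Bochner measurable on $[t_0,t)$. (This slice-wise measurability is also immediate from the fact that the product of a scalar Lebesgue measurable function with a Bochner measurable vector-valued function is Bochner measurable.)

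Next, Bochner's theorem, applied to this fixed slice, gives that $\int_{t_0}^{t}(t-s)^{\alpha-1}f(s)\,ds$ exists in $X$ if and only if $\int_{t_0}^{t}(t-s)^{\alpha-1}\|f(s)\|_X\,ds<\infty$. Equivalently, for each $t\in[t_0,t_1]$, $J^\alpha_{t_0,t}f(t)$ exists in $X$ if and only if $J^\alpha_{t_0,t}\|f(t)\|_X$ is finite. Therefore the sets
$$A:=\{t\in[t_0,t_1]:J^\alpha_{t_0,t}f(t)\text{ exists}\}\quad\text{and}\quad B:=\{t\in[t_0,t_1]:J^\alpha_{t_0,t}\|f(t)\|_X<\infty\}$$
coincide, and $A$ has full Lebesgue measure in $[t_0,t_1]$ if and only if $B$ does, which is exactly the claim.

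I do not foresee a genuine obstacle here: the entire argument rests on applying Bochner's theorem fiberwise. The only point to be careful about is the Bochner measurability of the integrand for \emph{every} single $t$, rather than merely for almost every $t$; but since with $t$ frozen the weight $(t-s)^{\alpha-1}$ is continuous in $s$, this verification is routine and no exceptional null set in $t$ is introduced.
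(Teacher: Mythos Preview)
Your proof is correct and follows essentially the same route as the paper: both arguments rest on the Bochner measurability of $s\mapsto (t-s)^{\alpha-1}f(s)$ (Remark \ref{remarknova}(i)) together with Bochner's theorem (Remark \ref{remarknova}(ii)). Your presentation is in fact slightly cleaner, since you apply Bochner's theorem symmetrically to obtain the pointwise equality $A=B$, whereas the paper unpacks the forward implication by hand via approximating step functions before invoking Remark \ref{remarknova}(ii) only for the converse.
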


\begin{proof} If we assume that $J_{t_0,t}f(t)$ exists for almost every $t\in[t_0,t_1]$, then function $s\mapsto (t-s)^{\alpha-1}f(s)$ is a Bochner integrable function in $[t_0,t]$, for almost every $t\in[t_0,t_1]$.
But then, $s\mapsto (t-s)^{\alpha-1}f(s)$ is the limit of integrable step functions $\{g_{n,t}(s)\}_{n=1}^\infty$ that satisfies
        $$\lim_{n\rightarrow\infty}\int_{t_0}^{t}\big\|g_{n,t}(s)-(t-s)^{\alpha-1}f(s)\big\|_X\,ds=0.$$
        Hence, $\{\|g_{n,t}(s)\|_X\}_{n=1}^\infty$ are integrable step (real) functions that, by left triangular inequality, satisfies
        \begin{multline*}\hspace{1.8cm}\lim_{n\rightarrow\infty}\int_{t_0}^{t_1}\Big|\|g_{n,t}(s)\|_X-(t-s)^{\alpha-1}\|f(s)\|_X\Big|\,ds\\\leq \lim_{n\rightarrow\infty}\int_{t_0}^{t_1}\big\|g_{n,t}(s)-(t-s)^{\alpha-1}f(s)\big\|_X\,ds=0.\end{multline*}
        Therefore, $s\mapsto(t-s)^{\alpha-1}\|f(s)\|_X$ is an integrable function in $[t_0,t]$, for almost every $t\in[t_0,t_1]$, what implies that $J_{t_0,t}\|f(t)\|_X$ exists for almost every $t\in[t_0,t_1]$.

      On the other hand, If we assume that $J_{t_0,t}\|f(t)\|_X$ exists for almost every $t\in[t_0,t_1]$, we would have
      $$\int_{t_0}^t\|(t-s)^{\alpha-1}f(s)\|_X\,ds$$
      exists for almost every $t\in[t_0,t_1]$, i.e., $s\mapsto \|(t-s)^{\alpha-1}f(s)\|_X$ belongs to $L^1(t_0,t;\mathbb{R})$, for almost every $t\in[t_0,t_1]$. Since item $(i)$ of Remark \ref{remarknova} ensures that $s\mapsto (t-s)^{\alpha-1}f(s)$ is a Bochner measurable function in $[t_0,t]$, for almost every $t\in[t_0,t_1]$, item $(ii)$ of Remark \ref{remarknova} allows us to conclude that $s\mapsto (t-s)^{\alpha-1}f(s)$ is a Bochner integrable function in $[t_0,t]$, for almost every $t\in[t_0,t_1]$. In other words, we have deduced that $J_{t_0,t}f(t)$ exists for almost every $t\in[t_0,t_1]$.
\end{proof}

Now we present a necessary and sufficient condition to ensure the existence of the RL fractional integral for almost every $t\in I$. We were slightly inspired by Martinez-Sanz-Martinez (see \cite[Lemma 2.1]{MaSaMa1} for details).

\begin{theorem}\label{ajustefinal} Let $\alpha>0$.
\begin{itemize}
  \item[(i)] $f\in L^1(t_0,b;X)$, for every $b\in(t_0,t_1)$, if and only if, $J^\alpha_{t_0,t}f(t)$ exists for almost every $t\in[t_0,t_1]$.\vspace*{0.2cm}
  \item[(ii)] If $f\in L^1(t_0,t_1;X)$, then $J^\alpha_{t_0,t}f(t)$ is Bochner integrable in $[t_0,t_1]$.
\end{itemize}
\end{theorem}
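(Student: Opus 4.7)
The plan is to reduce part (i) to a scalar statement via Lemma \ref{interlemma} (so one works with $\|f(s)\|_X$, an ordinary real-valued Lebesgue measurable function), and then handle each direction with a Fubini/Tonelli argument. Part (ii) will then follow combining part (i), a Bochner measurability argument via test-function approximation, and Bochner's criterion.

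For the forward direction of (i), assume $f\in L^1(t_0,b;X)$ for every $b\in(t_0,t_1)$. Fix such a $b$ and apply Tonelli's theorem to the non-negative integrand $(t,s)\mapsto(t-s)^{\alpha-1}\|f(s)\|_X\,\chi_{\{s<t\}}$ on $[t_0,b]\times[t_0,b]$. The result is
\[
\int_{t_0}^{b}\int_{t_0}^{t}(t-s)^{\alpha-1}\|f(s)\|_X\,ds\,dt
=\int_{t_0}^{b}\|f(s)\|_X\,\frac{(b-s)^{\alpha}}{\alpha}\,ds
\leq \frac{(b-t_0)^{\alpha}}{\alpha}\,\|f\|_{L^1(t_0,b;X)}<\infty.
\]
Hence the inner integral is finite for almost every $t\in[t_0,b]$; since $b<t_1$ is arbitrary, $J^{\alpha}_{t_0,t}\|f(t)\|_X<\infty$ for a.e.\ $t\in[t_0,t_1]$, and Lemma \ref{interlemma} gives the existence of $J^{\alpha}_{t_0,t}f(t)$ for a.e.\ $t$.

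For the backward direction, fix $b\in(t_0,t_1)$ and pick (using that the exceptional set has measure zero) a point $t^{*}\in(b,t_1)$ at which $J^{\alpha}_{t_0,t^{*}}\|f(t^{*})\|_X<\infty$. The key observation is that $(t^{*}-s)^{\alpha-1}$ is bounded below by a strictly positive constant $C_{\alpha}>0$ for $s\in[t_0,b]$, where one takes $C_{\alpha}=(t^{*}-b)^{\alpha-1}$ when $\alpha\geq 1$ and $C_{\alpha}=(t^{*}-t_0)^{\alpha-1}$ when $0<\alpha<1$ (case split by monotonicity in $s$). Consequently
\[
\Gamma(\alpha)\,J^{\alpha}_{t_0,t^{*}}\|f(t^{*})\|_X
\geq \int_{t_0}^{b}(t^{*}-s)^{\alpha-1}\|f(s)\|_X\,ds
\geq C_{\alpha}\int_{t_0}^{b}\|f(s)\|_X\,ds,
\]
which forces $\|f\|_X\in L^{1}(t_0,b;\mathbb{R})$, i.e.\ $f\in L^{1}(t_0,b;X)$.

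For part (ii), the same Tonelli computation as above with $b=t_1$ gives the bound
\[
\int_{t_0}^{t_1}\frac{1}{\Gamma(\alpha)}\int_{t_0}^{t}(t-s)^{\alpha-1}\|f(s)\|_X\,ds\,dt\;\leq\;\frac{(t_1-t_0)^{\alpha}}{\Gamma(\alpha+1)}\,\|f\|_{L^{1}(t_0,t_1;X)}<\infty,
\]
so $t\mapsto\|J^{\alpha}_{t_0,t}f(t)\|_X\in L^{1}(t_0,t_1;\mathbb{R})$. The only remaining issue, and in my view the main obstacle, is to establish that $t\mapsto J^{\alpha}_{t_0,t}f(t)$ is Bochner measurable on $[t_0,t_1]$ (Remark \ref{remarknova}(ii) then concludes). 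I would do this by density: using Remark \ref{remarktestf}(ii), pick $\{\phi_n\}\subset C^{\infty}_{c}$ with $\phi_n\to f$ in $L^{1}(t_0,t_1;X)$. Each $J^{\alpha}_{t_0,t}\phi_n$ is continuous in $t$, hence Bochner measurable; applying the $L^{1}$-estimate above to $f-\phi_n$ shows $J^{\alpha}_{t_0,t}\phi_n\to J^{\alpha}_{t_0,t}f$ in $L^{1}(t_0,t_1;X)$, and extracting an a.e.\ convergent subsequence yields the Bochner measurability of $J^{\alpha}_{t_0,t}f$. Bochner's criterion then delivers integrability.
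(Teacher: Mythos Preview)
Your proof is correct, and the backward direction of (i) is essentially identical to the paper's. The forward direction of (i) and part (ii), however, take a genuinely different route. The paper works directly with the vector-valued integrand $\psi_b(t,s)=g_\alpha(t-s)f(s)$: it verifies Bochner measurability on $[t_0,b]\times[t_0,b]$ via Remark~\ref{remarknova}(i), checks $\|\psi_b\|_X\in L^1$ by the same Tonelli computation you use, and then invokes the vector-valued Fubini theorem (Theorem~\ref{FubiniFubini}), which in one stroke yields that $t\mapsto J^\alpha_{t_0,t}f(t)$ exists almost everywhere \emph{and} is Bochner integrable on $[t_0,b]$; part (ii) is then literally the same argument with $b=t_1$. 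Your approach instead reduces to the scalar case first (Tonelli on $\|f\|_X$, then Lemma~\ref{interlemma} to recover existence of $J^\alpha f$), and for (ii) you establish Bochner measurability separately via a density argument with $C^\infty_c$ approximants and a.e.\ convergent subsequences before appealing to Bochner's criterion. Your route is more elementary in that it avoids the vector-valued Fubini machinery, but the paper's route is more economical: the measurability-by-approximation step in your part (ii) is simply absorbed into the conclusion of Theorem~\ref{FubiniFubini}.
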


\begin{proof} $(i)$ Let $b\in(t_0,t_1)$ and $f\in L^1(t_0,b;X)$. Define function $\psi_b:[t_0,b]\times[t_0,b]\rightarrow X$ by
$$\psi_b(t,s)=g_\alpha(t-s)f(s),$$
where $g_\alpha:\mathbb{R}\rightarrow\mathbb{R}$ is given by
\begin{equation}\label{galphafunc}g_\alpha(s):=\left\{\begin{array}{cl} s^{\alpha-1}/\Gamma(\alpha),&s>0,\vspace{0.2cm}\\
0,&s\leq0.\end{array}\right.\end{equation}

Since $f(t)$ is a Bochner measurable function, item $(i)$ of Remark \ref{remarknova} ensures that $\psi_b(t,s)$ is a Bochner measurable function in $[t_0,b]\times[t_0,b]$. Now, observe by Holder's inequality that
\begin{multline*}\Gamma(\alpha+1)\int_{t_0}^{b}{\int_{t_0}^{b}{\|\psi_b(t,s)\|_X}\,dt}\,ds=\alpha\int_{t_0}^{b}\left[\int_{s}^{b}(t-s)^{\alpha-1}\|f(s)\|_X\,dt\right]ds\\
=\int_{t_0}^{b}{(b-s)^{\alpha}\|f(s)\|_X}\,ds\leq (t_1-t_0)^\alpha\|f\|_{L^1(t_0,b;X)}.
\end{multline*}

Thus, item $(ii)$ of Remark \ref{remarknova} allows us to conclude that $\psi_b(s,r)$ is Bochner integrable in $[t_0,b]\times[t_0,b]$. But then, Theorem \ref{FubiniFubini} allows us to conclude that function
$$[t_0,b]\ni t\mapsto \int_{t_0}^{t}{\psi(t,s)}\,ds\,\,\,\,\Big(=J_{t_0,t}^\alpha f(t)\Big)$$
exists almost every and is Bochner integrable in $[t_0,b]$. Since $b$ was arbitrarily chosen in $(t_0,t_1)$, we deduce that $J_{t_0,t}^\alpha f(t)$ exists for almost every $t\in[t_0,t_1]$.

Conversely, let $b\in(t_0,t_1)$. Since $J^\alpha_{t_0,t}\|f(t)\|_X$ exists for almost every $t\in[t_0,t_1]$ (by Lemma \ref{interlemma}), there exists $t^{*}\in(b,t_1)$ such that $s\mapsto(t^{*}-s)^{\alpha-1}\|f(s)\|_X$ belongs to $L^1(t_0,t^{*},X)$. Consequently, if $s\in [t_0,b]$ we deduce that
    \begin{multline*}\hspace*{1.9cm}\|f(s)\|_X=\Big[(t^{*}-s)^{\alpha-1}\|f(s)\|_X\Big](t^{*}-s)^{1-\alpha}\\\leq\left\{\begin{array}{ll}\Big[(t^{*}-s)^{\alpha-1}\|f(s)\|_X\Big](t^{*}-b)^{1-\alpha},&\textrm{if }\alpha>1,\vspace*{0.3cm}\\
    \Big[(t^{*}-s)^{\alpha-1}\|f(s)\|_X\Big](t^{*}-t_0)^{1-\alpha},&\textrm{if }0<\alpha\leq1.\end{array}\right.\end{multline*}
    Thus, we conclude that
    \begin{multline*}\hspace*{1.8cm}\int_{t_0}^b{\|f(s)\|_X}\,ds\leq{\Big[(t^{*}-t_0)^{1-\alpha}+(t^{*}-b)^{1-\alpha}\Big]}\int_{t_0}^b{(t^{*}-s)^{\alpha-1}\|f(s)\|_X}\,ds\\
    \leq\Gamma(\alpha)\big[(t^{*}-t_0)^{1-\alpha}+(t^{*}-b)^{1-\alpha}\big]\,J^\alpha_{t_0,t}\|f(t)\|_X\Big|_{t=t^{*}},\end{multline*}
    i.e., $f\in L^1(t_0,b;X)$. Since $b\in(t_0,t_1)$ was an arbitrarily chosen, we conclude that $f\in L^1(t_0,b;X)$ for every $b\in(t_0,t_1)$.

$(ii)$ Proceeding exactly as in the first part of the proof of item $(i)$, however considering $f\in L^1(t_0,t_1;X)$, we shall obtain that  $J^\alpha_{t_0,t}f(t)$ is Bochner integrable in $[t_0,t_1]$.
\end{proof}

As a direct consequence of Theorem \ref{ajustefinal}, we have

\begin{corollary} Let $\alpha>0$. It holds that $f\in L^1(t_0,b;X)$, for all $b>t_0$, if and only if, $J^\alpha_{t_0,t}f(t)$ exists for almost every $t\in[t_0,\infty)$.
\end{corollary}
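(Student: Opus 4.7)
The plan is to reduce the corollary to Theorem \ref{ajustefinal}(i) by exhausting the half-line $[t_0,\infty)$ with an increasing sequence of bounded intervals $[t_0,t_1^{(n)}]$ where $t_1^{(n)}\to\infty$ (say $t_1^{(n)}=t_0+n$). The statement of Theorem \ref{ajustefinal} only requires picking a fixed right endpoint $t_1$, so both implications will follow from choosing $t_1$ appropriately and invoking countable subadditivity of the Lebesgue measure.

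For the forward implication, assuming $f\in L^1(t_0,b;X)$ for every $b>t_0$, I fix $n\in\mathbb{N}^*$ and set $t_1:=t_0+n$. Then $f\in L^1(t_0,b;X)$ for every $b\in(t_0,t_1)$, so Theorem \ref{ajustefinal}(i) gives a null set $N_n\subset[t_0,t_0+n]$ off which $J^\alpha_{t_0,t}f(t)$ exists. The countable union $N:=\bigcup_{n\geq 1}N_n$ is still a null subset of $[t_0,\infty)$, and $J^\alpha_{t_0,t}f(t)$ exists for every $t\in[t_0,\infty)\setminus N$.

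For the converse, suppose $J^\alpha_{t_0,t}f(t)$ exists for almost every $t\in[t_0,\infty)$, and let $b>t_0$ be arbitrary. Choose any $t_1>b$; then $J^\alpha_{t_0,t}f(t)$ exists for almost every $t\in[t_0,t_1]$, and the ``conversely'' part of Theorem \ref{ajustefinal}(i) yields $f\in L^1(t_0,b';X)$ for every $b'\in(t_0,t_1)$. Taking $b'=b$ gives the desired conclusion.

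There is no real obstacle here; the corollary is a bookkeeping exercise in localizing the bounded-interval statement. The only point worth stating explicitly is that ``for almost every $t\in[t_0,\infty)$'' is compatible with ``for almost every $t\in[t_0,t_1]$ for each $t_1$'' precisely because a countable union of Lebesgue-null sets remains Lebesgue-null, which is what lets the reduction go through in the forward direction.
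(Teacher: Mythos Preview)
Your proof is correct and is precisely the argument the paper has in mind: the authors do not write out a proof at all, stating only that the corollary is ``a direct consequence of Theorem \ref{ajustefinal}'', and your exhaustion of $[t_0,\infty)$ by intervals $[t_0,t_0+n]$ together with countable subadditivity of Lebesgue measure is exactly how one unpacks that phrase.
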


\begin{remark}\label{remark5} The above results and definitions allow us to point out some important questions about the RL fractional integral of order $\alpha>0$.
\begin{itemize}
\item[(i)] If $f\in L^1(t_0,t_1;X)$ and, for $\alpha>0$ we consider function $g_\alpha:\mathbb{R}\rightarrow\mathbb{R}$ given by \eqref{galphafunc}, by defining $f_{t_0}:\mathbb{R}\rightarrow X$ as equal to $f$ in $[t_0,t_1]$ and equal to zero otherwise, we deduce that
$$J_{t_0,t}^\alpha f(t)=\big[\,g_{\alpha}*{f_{t_0}}\,\big](t),\quad \textrm{for almost every }t\in[t_0,t_1].$$
We omit the subscript $t_0$ in function $f(t)$ when it does not lead to any confusion. For more details on convolutions of Bochner integrable functions see \cite[Chapter XIV]{Mik1}.\vspace*{0.2cm}
\item[(ii)] For any $f\in L^p(t_0,t_1;\mathbb{R})$, with $1\leq p\leq\infty$, besides knowing that $J_{t_0,t}f(t)$ exists for almost every $t\in[t_0,t_1]$, we have that
\begin{equation*}\lim_{\alpha\rightarrow0^+}{\big\|J_{t_0,t}^\alpha f-f\big\|_{L^p(t_0,t_1;\mathbb{R})}}=0\end{equation*}
and that for any $\alpha_0>0$
\begin{equation*}\lim_{\alpha\rightarrow\alpha_0}\left\{\sup_{f\in L^p(t_0,t_1;\mathbb{R})\setminus\{0\}}\left[{\dfrac{\big\|J_{t_0,t}^\alpha f-J_{t_0,t}^{\alpha_0}f\big\|_{L^p(t_0,t_1;\mathbb{R})}}{\|f\|_{L^p(t_0,t_1;\mathbb{R})}}}\right]\right\}=0.\end{equation*}

Therefore, for the completeness of the definition of the RL fractional integral in the space $L^p(t_0,t_1;\mathbb{R})$, we define $J_{t_0,t}^0 f(t):=f(t),$ for every $f\in L^p(t_0,t_1;\mathbb{R})$. For details on the proof see \cite[Theorem 2.6]{SaKiMa1}. We emphasize that the case of Bochner-Lebesgue spaces is proved in Section \ref{sec3}. \vspace*{0.2cm}
\item[(iii)] We point out that the continuity in the order of integration of the RL fractional integral as an operator from $L^p(t_0,t_1;\mathbb{R})$ into itself is a well known result. To be more specific, it holds that
\begin{equation*}\|J_{t_0,t}^\alpha f(t)\|_{L^p(t_0,t_1;\mathbb{R})}\leq\dfrac{(t_1-t_0)^\alpha}{\Gamma(\alpha+1)}\|f\|_{L^p(t_0,t_1;\mathbb{R})},\end{equation*}
for every function $f\in L^p(t_0,t_1;\mathbb{R})$. This statement can be found in several classical texts (see for instance the proof of \cite[Theorem 2.6]{SaKiMa1}), however its demonstration, in general, is not displayed.
\end{itemize}
\end{remark}

\section{The compactness of RL fractional integral operator in Bochner-Lebesgue spaces}\label{sec3}

In this section we discuss the compactness of that RL fractional integral of order $\alpha>0$ as a liner operator from $L^p(I;X)$ into itself, for any $1\leq p\leq\infty$. However, since interval $I$ can be bounded or unbounded, we split this discussion in two cases.

\subsection{The Bounded Interval Case} Let us show first that the RL fractional integral operator of order $\alpha>0$, when $I=[t_0,t_1]$ almost everywhere, is a bounded operator. We emphasize that the boundness of this operator is widely known in the literature when $X=\mathbb{R}$ (see for instance \cite{SaKiMa1}) and that an adaptation to the case when $X\not=\mathbb{R}$ does not brings any new complexities. However, for the completeness of this paper, below we give a non classical proof to the case $X=\mathbb{R}$, by using interpolation, and then adapt it to the case when $X\not=\mathbb{R}$.

\begin{theorem}\label{minkowskiseq} Let $\alpha>0$, $1\leq p\leq\infty$ and $f\in L^p(t_0,t_1;X)$. Then $J_{t_0,t}^\alpha f(t)$ is Bochner integrable and belongs to $L^p(t_0,t_1;X)$. Furthermore, it holds that
\begin{equation}\label{naturalinclusion}\left[\int_{t_0}^{t_1}{\left\|J_{t_0,t}^\alpha f(t)\right\|^p_X}\,dt\right]^{1/p}\leq \left[\dfrac{(t_1-t_0)^\alpha}{\Gamma(\alpha+1)}\right] \|f\|_{L^p(t_0,t_1;X)}.\end{equation}
In other words, $J_{t_0,t}^\alpha$ is a bounded linear operator from $L^p(t_0,t_1;X)$ into itself.
\end{theorem}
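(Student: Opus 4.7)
The plan is to reduce to the scalar-valued case and then establish the scalar case via Riesz-Thorin interpolation between $p=1$ and $p=\infty$, exactly as the authors advertise. Since $[t_0,t_1]$ has finite measure we have the inclusion $L^p(t_0,t_1;X)\subseteq L^1(t_0,t_1;X)$ for every $1\leq p\leq\infty$, so Theorem \ref{ajustefinal}(ii) already guarantees that $J_{t_0,t}^\alpha f$ is defined almost everywhere on $[t_0,t_1]$ and is Bochner integrable there. The only remaining task is to produce the $L^p$ norm bound.

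For the scalar case $X=\mathbb{R}$, I would compute the two endpoints directly. At $p=\infty$, for a.e.\ $t\in[t_0,t_1]$,
$$|J_{t_0,t}^\alpha f(t)|\leq \frac{1}{\Gamma(\alpha)}\int_{t_0}^{t}(t-s)^{\alpha-1}|f(s)|\,ds\leq \frac{(t_1-t_0)^\alpha}{\Gamma(\alpha+1)}\|f\|_{L^\infty(t_0,t_1;\mathbb{R})}.$$
At $p=1$, Tonelli's theorem yields
$$\int_{t_0}^{t_1}|J_{t_0,t}^\alpha f(t)|\,dt\leq \frac{1}{\Gamma(\alpha)}\int_{t_0}^{t_1}|f(s)|\int_s^{t_1}(t-s)^{\alpha-1}\,dt\,ds\leq \frac{(t_1-t_0)^\alpha}{\Gamma(\alpha+1)}\|f\|_{L^1(t_0,t_1;\mathbb{R})}.$$
Thus $J_{t_0,t}^\alpha$ is a linear operator bounded by the same constant $M:=(t_1-t_0)^\alpha/\Gamma(\alpha+1)$ on both $L^1$ and $L^\infty$, and the Riesz-Thorin interpolation theorem immediately gives boundedness on $L^p(t_0,t_1;\mathbb{R})$ with operator norm at most $M$ for every intermediate $1<p<\infty$.

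For the general Banach-space case, I would pass back to the scalar case by means of the pointwise estimate
$$\bigl\|J_{t_0,t}^\alpha f(t)\bigr\|_X\leq \frac{1}{\Gamma(\alpha)}\int_{t_0}^{t}(t-s)^{\alpha-1}\|f(s)\|_X\,ds=J_{t_0,t}^\alpha\bigl(\|f(\cdot)\|_X\bigr)(t),$$
valid almost everywhere by the triangle inequality for the Bochner integral. Since $\|f(\cdot)\|_X\in L^p(t_0,t_1;\mathbb{R})$ by definition of $L^p(t_0,t_1;X)$, the scalar bound just proved applied to $s\mapsto\|f(s)\|_X$ has $L^p$-norm at most $M\|f\|_{L^p(t_0,t_1;X)}$; combining the two displays delivers \eqref{naturalinclusion}.

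The main (mild) obstacle is the justification of the pointwise norm bound: it requires the vector-valued triangle inequality together with the a.e.\ existence and integrability of the integrand $s\mapsto (t-s)^{\alpha-1}f(s)$ on $[t_0,t]$, which is furnished by Theorem \ref{ajustefinal}(i) combined with Lemma \ref{interlemma}. Beyond this, the argument simply assembles standard pieces; the true novelty advertised by the authors is the use of Riesz-Thorin in place of a direct Minkowski-integral computation over the range $1<p<\infty$.
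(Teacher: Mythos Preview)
Your proposal is correct and follows essentially the same route as the paper: Bochner integrability via Theorem \ref{ajustefinal}(ii), the endpoint bounds at $p=1$ (via Fubini/Tonelli) and $p=\infty$ (direct estimate) for $X=\mathbb{R}$, Riesz--Thorin interpolation for intermediate $p$, and then the reduction of the general $X$ case to the scalar one through the pointwise inequality $\|J_{t_0,t}^\alpha f(t)\|_X\leq J_{t_0,t}^\alpha(\|f(\cdot)\|_X)(t)$. The only cosmetic difference is that the paper invokes its Corollary \ref{FubiniFubini2} where you cite Tonelli, and it phrases the endpoint estimates as applications of H\"older's inequality rather than writing them out as you do; substantively the arguments coincide.
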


\begin{proof} Item $(ii)$ of Theorem \ref{ajustefinal} ensures that $J_{t_0,t}^\alpha f(t)$ is Bochner integrable in $[t_0,t_1]$.
Let us now prove that if $f\in L^p(t_0,t_1;X)$ then $J_{t_0,t}^\alpha f\in L^p(t_0,t_1;X)$, for any value $1\leq p\leq\infty$.

Assume initially that $X=\mathbb{R}$. Since
$$\int_{t_0}^{t_1}\|J_{t_0,t}^\alpha f(t)\|_X\,dt\leq \int_{t_0}^{t_1}\left[{\int_{t_0}^t{{{(t-w)^{\alpha-1}}\|f(w)\|_X}}\,dw}\right]\,dt,$$
and Corollary \ref{FubiniFubini2} guarantees
\begin{multline*}\int_{t_0}^{t_1}\left[{\int_{t_0}^t{{{(t-w)^{\alpha-1}}\|f(w)\|_X}}\,dw}\right]\,dt=\int_{t_0}^{t_1}\left[{\int_{w}^{t_1}{{{(t-w)^{\alpha-1}}}}\,dt}\right]\|f(w)\|_X\,dw\\
=\dfrac{1}{\alpha}\int_{t_0}^{t_1}(t_1-w)^\alpha\|f(w)\|_X\,dw,\end{multline*}
Holder's inequality gives us
$$\int_{t_0}^{t_1}\|J_{t_0,t}^\alpha f(t)\|_X\,dt\leq \left[\dfrac{(t_1-t_0)^\alpha}{\Gamma(\alpha+1)}\right]\int_{t_0}^{t_1}\|f(w)\|_X\,dw,$$
i.e., $J_{t_0,t}^\alpha f\in L^1(t_0,t_1;\mathbb{R})$.

On the other hand, for any $f\in L^\infty(t_0,t_1;\mathbb{R})$, Holder's inequality ensures
$$\dfrac{1}{\Gamma(\alpha)}\int_{t_0}^t{{{(t-w)^{\alpha-1}}|f(w)|}}\,dw\leq \left[\dfrac{(t-t_0)^\alpha}{\Gamma(\alpha+1)}\right]\esssup_{w\in [t_0,t_1]}|f(w)|,$$
and therefore we obtain the estimate
$$\esssup_{t\in [t_0,t_1]}|J_{t_0,t}^\alpha f(t)|\leq\left[\dfrac{(t_1-t_0)^\alpha}{\Gamma(\alpha+1)}\right]\esssup_{w\in [t_0,t_1]}|f(w)|,$$
i.e., $J_{t_0,t}^\alpha f\in L^\infty(t_0,t_1;\mathbb{R})$.

Thus, if we consider Theorem \ref{riezthorin} with $p_1=1$, $p_2=\infty$ and $\theta\in(0,1)$, for every function $f\in L^{{1/(1-\theta)}}(t_0,t_1;\mathbb{R})$ we have
$$\|J^\alpha _{t_0,t}f\|_{L^{1/(1-\theta)}(t_0,t_1;\mathbb{R})} \leq\left[\dfrac{(t_1-t_0)^\alpha}{\Gamma(\alpha+1)}\right]\|f\|_{L^{1/(1-\theta)}(t_0,t_1;\mathbb{R})}.$$
i.e., $J_{t_0,t}^\alpha f\in L^{{1/(1-\theta)}}(t_0,t_1;\mathbb{R})$. The fact that the range of $1/(1-\theta)$ is $(1,\infty)$ completes the proof of this theorem for this particular case.

Now assume that $f\in L^p(t_0,t_1;X)$, for any $1\leq p\leq\infty$. Since it holds that function $\|f(t)\|_X$ belongs to $L^p(t_0,t_1;\mathbb{R})$, by applying the already proved version of this result, we deduce that
$$\|J^\alpha _{t_0,t}\|f\|_X\|_{L^{p}(t_0,t_1;\mathbb{R})}\leq\left[\dfrac{(t_1-t_0)^\alpha}{\Gamma(\alpha+1)}\right]\|\|f\|_X\|_{L^{p}(t_0,t_1;\mathbb{R})}.$$
However, the above inequality implies that
\begin{multline*}\|J^\alpha _{t_0,t}f\|_{L^{p}(t_0,t_1;X)}\leq \|J^\alpha _{t_0,t}\|f\|_X\|_{L^{p}(t_0,t_1;\mathbb{R})}\leq\left[\dfrac{(t_1-t_0)^\alpha}{\Gamma(\alpha+1)}\right]\|\|f\|_X\|_{L^{p}(t_0,t_1;\mathbb{R})}
\\=\left[\dfrac{(t_1-t_0)^\alpha}{\Gamma(\alpha+1)}\right]\|f\|_{L^{p}(t_0,t_1;X)},\end{multline*}
i.e., $J_{t_0,t}^\alpha f\in L^{p}(t_0,t_1;X)$ and inequality \eqref{naturalinclusion} holds.
\end{proof}

For the completeness of the theory presented in this paper, in what follows we present another proof to Theorem \ref{minkowskiseq}, when $1\leq p<\infty$, which now is only based on several algebraic manipulations.

\begin{theorem}\label{continuity} If $1\leq p<\infty$ and $f\in L^p(t_0,t_1;X)$, then $J_{t_0,t}^\alpha f(t)$ belongs to $L^p(t_0,t_1;X)$. Furthermore, it holds that
\begin{equation*}\left[\int_{t_0}^{t_1}{\left\|J_{t_0,t}^\alpha f(t)\right\|^p_X}\,dt\right]^{1/p}\leq \left[\dfrac{(t_1-t_0)^\alpha}{\Gamma(\alpha+1)}\right] \|f\|_{L^p(t_0,t_1;X)}.\end{equation*}
\end{theorem}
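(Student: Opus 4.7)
The plan is to establish the bound directly by a Hölder-plus-Fubini argument that works uniformly for $1\leq p<\infty$, thereby avoiding the Riesz-Thorin interpolation employed in Theorem \ref{minkowskiseq}. First I would use the triangle inequality for the Bochner integral to reduce to the real-valued case via the pointwise estimate $\|J^\alpha_{t_0,t}f(t)\|_X \leq J^\alpha_{t_0,t}\|f(t)\|_X$, valid for almost every $t\in[t_0,t_1]$; this shifts the work to proving the desired $L^p$ bound for the scalar nonnegative function $\|f(\cdot)\|_X \in L^p(t_0,t_1;\mathbb{R})$.

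For $1<p<\infty$, with $p'$ denoting the conjugate exponent, I would perform the algebraic split $(t-s)^{\alpha-1} = (t-s)^{(\alpha-1)/p'}\cdot(t-s)^{(\alpha-1)/p}$ and apply Hölder's inequality to obtain
\begin{equation*}
\int_{t_0}^t (t-s)^{\alpha-1}\|f(s)\|_X \, ds \leq \left[\int_{t_0}^t (t-s)^{\alpha-1}\,ds\right]^{1/p'}\left[\int_{t_0}^t (t-s)^{\alpha-1}\|f(s)\|_X^p\,ds\right]^{1/p}.
\end{equation*}
The first factor evaluates to $[(t-t_0)^\alpha/\alpha]^{1/p'}$, which is bounded above by $[(t_1-t_0)^\alpha/\alpha]^{1/p'}$, producing a clean pointwise estimate for $\|J^\alpha_{t_0,t}f(t)\|_X$.

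Raising this estimate to the $p$-th power, integrating in $t$ over $[t_0,t_1]$, and interchanging the order of integration via Corollary \ref{FubiniFubini2}, the inner integral becomes
\begin{equation*}
\int_s^{t_1}(t-s)^{\alpha-1}\,dt = \frac{(t_1-s)^\alpha}{\alpha} \leq \frac{(t_1-t_0)^\alpha}{\alpha}.
\end{equation*}
Collecting constants, the identity $p/p'+1=p$ makes the exponent of $(t_1-t_0)^\alpha$ equal to $p$, while $\alpha^{-p/p'}\cdot\alpha^{-1}=\alpha^{-p}$ combines with $\Gamma(\alpha)^{-p}$ to produce $\Gamma(\alpha+1)^{-p}$; taking the $p$-th root delivers the claimed inequality. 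The endpoint $p=1$ corresponds to the degenerate split in which the first Hölder factor disappears, and the estimate reduces directly to the Fubini computation already executed at the beginning of the proof of Theorem \ref{minkowskiseq}.

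Since each step is routine, there is no substantive analytical obstacle; the only care required is in the exponent bookkeeping, where the relation $1/p+1/p'=1$ must be tracked throughout so that the contributions of $(t_1-t_0)^\alpha$ and $\alpha^{-1}$ assemble exactly into the target constant $(t_1-t_0)^{\alpha p}/\Gamma(\alpha+1)^p$.
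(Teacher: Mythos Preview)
Your argument is correct and yields exactly the stated constant; the exponent bookkeeping checks out. However, it follows a genuinely different route from the paper's own proof of Theorem~\ref{continuity}. The paper does not use H\"older's inequality at all: after two changes of variables it applies Minkowski's inequality for integrals (Corollary~\ref{minkowskiminkowski}) to move the $L^p$-norm inside, and a final change of variables produces the bound. Your approach instead splits the kernel as $(t-s)^{(\alpha-1)/p'}(t-s)^{(\alpha-1)/p}$, applies H\"older pointwise, and then uses Fubini on the $p$-th power---essentially the standard proof of Young's convolution inequality specialized to the one-sided kernel $g_\alpha$. What each buys: the paper's Minkowski argument handles all $1\leq p<\infty$ in a single stroke with no case distinction, whereas your method must treat $p=1$ separately (the H\"older split degenerates). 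On the other hand, your proof relies only on H\"older and Fubini and avoids invoking Minkowski's integral inequality, which some would regard as the marginally heavier tool. Both deliver the sharp constant $(t_1-t_0)^\alpha/\Gamma(\alpha+1)$ with the same ease.
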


\begin{proof} If we change the variables $s=t-w$, we obtain
\begin{multline*}\Gamma(\alpha)\left[\int_{t_0}^{t_1}{\left\|J_{t_0,t}^\alpha f(t)\right\|^p_X}\,dt\right]^{1/p}\leq\left[\int_{t_0}^{t_1}\left[\int_{t_0}^{t}(t-w)^{\alpha-1}\|f(w)\|_X\,dw\right]^p\,dt\right]^{1/p}\\
=\left[\int_{t_0}^{t_1}\left[\int_{0}^{t-t_0}s^{\alpha-1}\|f(t-s)\|_X\,ds\right]^p\,dt\right]^{1/p}.
\end{multline*}
Again, by changing the variables $t=r+t_0$, we deduce
$$\Gamma(\alpha)\left[\int_{t_0}^{t_1}{\left\|J_{t_0,t}^\alpha f(t)\right\|^p_X}\,dt\right]^{1/p}\leq\left[\int_{0}^{t_1-t_0}\left[\int_{0}^{r}s^{\alpha-1}\|f(r+t_0-s)\|_X\,ds\right]^p\,dr\right]^{1/p}.$$
Now, if we apply Corollary \ref{minkowskiminkowski} in the term on the right side of the above inequality, we get
$$\Gamma(\alpha)\left[\int_{t_0}^{t_1}{\left\|J_{t_0,t}^\alpha f(t)\right\|^p_X}\,ds\right]^{1/p}
\leq\int_{0}^{t_1-t_0}s^{\alpha-1}\left[\int_{s}^{t_1-t_0}\|f(r+t_0-s)\|_X^p\,dr\right]^{1/p}\,ds.$$

Finally, by choosing $r=l+s-t_0$, we achieve
\begin{multline*}\Gamma(\alpha)\left[\int_{t_0}^{t_1}{\left\|J_{t_0,t}^\alpha f(t)\right\|^p_X}\,ds\right]^{1/p}\leq \int_{0}^{t_1-t_0}s^{\alpha-1}\left[\int_{t_0}^{t_1-s}\|f(l)\|_X^p\,dl\right]^{1/p}\,ds\\
\leq\left[\int_{0}^{t_1-t_0}s^{\alpha-1}\,ds\right]\left[\int_{t_0}^{t_1}\|f(l)\|_X^p\,dl\right]^{1/p}=\left[\dfrac{(t_1-t_0)^\alpha}{\alpha}\right]\|f\|_{L^p(t_0,t_1;X)},\end{multline*}
what is equivalent to
$$\left[\int_{t_0}^{t_1}{\left\|J_{t_0,t}^\alpha f(t)\right\|^p_X}\,ds\right]^{1/p}\leq \left[\dfrac{(t_1-t_0)^\alpha}{\Gamma(\alpha+1)}\right] \|f\|_{L^p(t_0,t_1;X)},$$
as we wanted.
\end{proof}

As proved above, the fractional integral $J_{t_0,t}^\alpha$ defines a bounded linear operator from the Banach space $L^p(t_0,t_1;X)$ into itself, no matter what value $1\leq p\leq\infty$. Thus, following this line of thought, it is very natural to ask whether or not it is also a compact operator.

Let us begin by recalling J. Simon's result about the characterization of compact sets in $L^p(t_0,t_1;X)$, when $1\leq p<\infty$.

\begin{theorem}\label{simon}\cite[Theorem 1]{Si1} Let $1\leq p<\infty$. A set of functions $F\subset L^p(t_0,t_1;X)$ is relatively compact in $L^p(t_0,t_1;X)$ if, and only if:\vspace*{0.2cm}
\begin{itemize}
\item[(i)] $\left\{\int_{t_0^*}^{t_1^*}f(t)dt\,:\,f\in F\right\}$ is relatively compact in $X$, for every $t_0<t_0^*<t_1^*<t_1$;\vspace*{0.2cm}
\item[(ii)] $\lim_{h\rightarrow 0^+}\left[\sup_{f\in F}\left(\int_{t_0}^{t_1-h}\|f(t+h)-f(t)\|_X^p\,dt\right)^{1/p}\right]=0$.
\end{itemize}
\end{theorem}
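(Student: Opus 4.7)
\medskip

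\textbf{Plan of proof.} The plan is to recognize Simon's theorem as the Bochner-space analogue of the Kolmogorov--Riesz--Fr\'echet compactness criterion, and prove it by a standard mollification/Arzel\`a--Ascoli scheme. The proof splits naturally into a (straightforward) necessity part and a more delicate sufficiency part that uses both conditions.

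\medskip

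\textbf{Necessity.} Assume $F$ is relatively compact in $L^p(t_0,t_1;X)$. For $(i)$, fix $t_0<t_0^*<t_1^*<t_1$ and note that
\begin{equation*}
T(f):=\int_{t_0^*}^{t_1^*}f(t)\,dt
\end{equation*}
defines a bounded linear operator $T:L^p(t_0,t_1;X)\to X$ (bounded by H\"older's inequality with constant $(t_1^*-t_0^*)^{1/p'}$). Continuous linear images of relatively compact sets are relatively compact, so $T(F)$ is relatively compact in $X$. For $(ii)$, the classical argument of equicontinuity of translations applies: given $\varepsilon>0$, cover $F$ by finitely many balls of radius $\varepsilon/3$ around elements $f_1,\dots,f_N$, approximate each $f_j$ in $L^p$ by a function $\phi_j\in C^\infty_c(t_0,t_1;X)$ (using Remark \ref{remarktestf}$(ii)$), and use the uniform continuity of the $\phi_j$ on the closure of their supports to obtain $h_0>0$ such that $\|\phi_j(\cdot+h)-\phi_j\|_{L^p}<\varepsilon/3$ for all $j$ and $0<h<h_0$. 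A triangle inequality then yields $\sup_{f\in F}\|f(\cdot+h)-f\|_{L^p(t_0,t_1-h;X)}<\varepsilon$.

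\medskip

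\textbf{Sufficiency.} Assume $(i)$ and $(ii)$ hold. The key device is the Steklov averaging operator
\begin{equation*}
(M_h f)(t):=\frac{1}{h}\int_t^{t+h}f(s)\,ds,\qquad t\in[t_0,t_1-h].
\end{equation*}
Condition $(ii)$ together with Minkowski's integral inequality shows that $\|M_hf-f\|_{L^p(t_0,t_1-h;X)}$ tends to $0$ as $h\to 0^+$ \emph{uniformly} in $f\in F$, so $F$ is approximated in $L^p$ by the family $M_hF$ arbitrarily well. It thus suffices to prove that for each fixed $h>0$ and each fixed subinterval $[t_0^*,t_1^*]\subset(t_0,t_1-h)$, the family $\{M_hf|_{[t_0^*,t_1^*]}:f\in F\}$ is relatively compact in $C([t_0^*,t_1^*];X)$. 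Relative compactness in $L^p$ over the full interval then follows by a standard cutoff/diagonal argument together with the uniform approximation property. To apply Arzel\`a--Ascoli to $M_hF$ on $[t_0^*,t_1^*]$, one verifies two points:
\begin{itemize}
\item Pointwise precompactness: for each fixed $t\in[t_0^*,t_1^*]$, the set $\{(M_hf)(t):f\in F\}=h^{-1}\{\int_t^{t+h}f(s)\,ds:f\in F\}$ is relatively compact in $X$ by condition $(i)$.
\item Equicontinuity: for $t,t'\in[t_0^*,t_1^*]$ with $|t-t'|=k$ small, a direct estimate of $(M_hf)(t')-(M_hf)(t)$ in terms of $f(\cdot+k)-f(\cdot)$, combined with H\"older, yields $\|(M_hf)(t')-(M_hf)(t)\|_X\le Ch^{-1}\|f(\cdot+k)-f\|_{L^p}$, and condition $(ii)$ makes this uniformly small in $f$.
\end{itemize}

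\medskip

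\textbf{Main obstacle.} The delicate step is the sufficiency direction, specifically the interplay between the two conditions when passing from compactness of the Steklov averages on each interior subinterval $[t_0^*,t_1^*]$ to relative compactness of $F$ itself on the full interval $[t_0,t_1]$. One must absorb the boundary layers $[t_0,t_0^*]\cup[t_1^*,t_1]$ using the $L^p$ smallness of $f$ on small sets (which follows, for $p=1$, from an extra uniform integrability argument --- automatic for $1<p<\infty$ if boundedness of $F$ in $L^p$ is first extracted from $(i)$ and $(ii)$) and then choose $h=h(\varepsilon)$ and $[t_0^*,t_1^*]=[t_0^*(\varepsilon),t_1^*(\varepsilon)]$ jointly so that the $C$-precompactness of $M_hF$ on the interior combined with the $L^p$-closeness $\|M_hf-f\|_{L^p}<\varepsilon$ delivers a finite $\varepsilon$-net for $F$. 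This bookkeeping, and the verification that boundedness of $F$ in $L^p$ is indeed a consequence of $(i)$ and $(ii)$, is the technical heart of Simon's argument.
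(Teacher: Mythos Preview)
The paper does not prove this statement at all: Theorem~\ref{simon} is stated with the citation \cite[Theorem~1]{Si1} and used as a black box, so there is no ``paper's own proof'' to compare against. Your sketch is essentially Simon's original argument (Steklov means plus Arzel\`a--Ascoli on interior subintervals, with the uniform $L^p$-closeness of $M_hf$ to $f$ coming from condition~$(ii)$), so in spirit you are reproducing exactly the proof the paper is citing.

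As a sketch it is sound; the one place where you are honest about a gap is also the only real issue. You correctly flag that boundedness of $F$ in $L^p$ must be extracted from $(i)$ and $(ii)$ before the rest of the machinery runs, and that for $p=1$ one also needs uniform integrability near the endpoints to absorb the boundary layers. In Simon's paper these are handled explicitly (his Lemmas~2 and~3): boundedness follows by combining $(i)$ on a fixed interior interval with the translation estimate $(ii)$ to control $\|f\|_{L^p}$ on the rest, and the same mechanism gives the uniform smallness of $\int_{t_0}^{t_0^*}\|f\|_X^p+\int_{t_1^*}^{t_1}\|f\|_X^p$ needed for the cutoff argument. If you were to write this out in full you would need to make that step precise; as presented, your ``Main obstacle'' paragraph identifies exactly the right difficulty but does not resolve it.
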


\begin{remark} The case $p=\infty$ in the above theorem does not holds in general, as pointed out by J. Simon. In fact, if we define $\phi:[0,2]\rightarrow\mathbb{R}$ by
$$\phi(t)=\left\{\begin{array}{ll}0,&\textrm{for }\,t\in[0,1),\\1,&\textrm{for }\,t\in[1,2],\end{array}\right.$$
then $F:=\{\phi\}$ is compact in $L^\infty(0,2;\mathbb{R})$ but does not satisfies item $(ii)$, since
$$\esssup_{s\in [0,2-h]}\|\phi(s+h)-\phi(s)\|_{X}=1.$$
\end{remark}

Bearing Theorem \ref{simon} in mind we prove the following compactness result to the RL fractional integral.

\begin{theorem}\label{compactrieman} Let $1\leq p<\infty$ and $\alpha>0$. The bounded operator
$$J_{t_0,t}^\alpha:L^p(t_0,t_1;X)\rightarrow L^p(t_0,t_1;X)$$
is compact if, and only if, for any bounded set $F\subset L^p(t_0,t_1;X)$ it holds that
\begin{equation}\label{novahip}\left\{\begin{array}{l}\left\{J_{t_0,t_1^*}^{1+\alpha} f(t_1^*)-J_{t_0,t_0^*}^{1+\alpha} f(t_0^*)\,:\,f\in F\right\}\textrm{ is relatively compact in } X,\vspace*{0.2cm}\\
\textrm{for every }t_0<t_0^*<t_1^*<t_1.\end{array}\right.\end{equation}
\end{theorem}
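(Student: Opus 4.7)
The plan is to use Simon's characterization (Theorem \ref{simon}) applied to the image $J^\alpha_{t_0,t}(F)$ of a bounded set $F\subset L^p(t_0,t_1;X)$. Since $J^\alpha_{t_0,t}$ is bounded (Theorem \ref{minkowskiseq}), the image is bounded in $L^p(t_0,t_1;X)$, and the compactness of $J^\alpha_{t_0,t}$ is equivalent to the relative compactness of $J^\alpha_{t_0,t}(F)$ for every such $F$. I therefore need to show that condition $(i)$ of Simon's theorem applied to $J^\alpha_{t_0,t}(F)$ is exactly \eqref{novahip}, and that condition $(ii)$ is automatic for bounded $F$.

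For the identification of Simon's condition $(i)$ with \eqref{novahip}: fix $t_0<t_0^*<t_1^*<t_1$ and compute, by Fubini (Theorem \ref{FubiniFubini}),
\begin{multline*}
\int_{t_0^*}^{t_1^*}J^\alpha_{t_0,t}f(t)\,dt = \frac{1}{\Gamma(\alpha)}\int_{t_0^*}^{t_1^*}\int_{t_0}^{t}(t-s)^{\alpha-1}f(s)\,ds\,dt \\
= \frac{1}{\Gamma(\alpha+1)}\left[\int_{t_0}^{t_1^*}(t_1^*-s)^\alpha f(s)\,ds - \int_{t_0}^{t_0^*}(t_0^*-s)^\alpha f(s)\,ds\right] = J^{1+\alpha}_{t_0,t_1^*}f(t_1^*) - J^{1+\alpha}_{t_0,t_0^*}f(t_0^*),
\end{multline*}
after splitting the $s$-integral at $t_0^*$ and evaluating the inner $t$-integral in the two resulting pieces. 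This equality is the heart of the theorem, and it immediately gives both directions of the stated equivalence modulo Simon's condition $(ii)$.

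To verify Simon's condition $(ii)$, write for $t\in[t_0,t_1-h]$
\begin{equation*}
J^\alpha_{t_0,t+h}f(t+h) - J^\alpha_{t_0,t}f(t) = \frac{1}{\Gamma(\alpha)}\int_0^h u^{\alpha-1}f(t+h-u)\,du + \frac{1}{\Gamma(\alpha)}\int_0^{t-t_0}\!\!\bigl[(u+h)^{\alpha-1}-u^{\alpha-1}\bigr]f(t-u)\,du,
\end{equation*}
obtained by splitting the integral at $t$ and performing the change of variable $u = t+h-s$ (resp.\ $u=t-s$). Taking the $L^p$-norm in $t$ and invoking Minkowski's integral inequality (Corollary \ref{minkowskiminkowski}), each term is bounded by a multiple of $\|f\|_{L^p(t_0,t_1;X)}$ times an explicit scalar integral; the first contributes at most $h^\alpha/\Gamma(\alpha+1)\,\|f\|_{L^p}$, while the second is dominated by $\Gamma(\alpha)^{-1}\|f\|_{L^p}\int_0^{t_1-t_0-h}\bigl|(u+h)^{\alpha-1}-u^{\alpha-1}\bigr|\,du$. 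The remaining scalar integral can be evaluated in closed form by separately treating the monotonicity of $u\mapsto u^{\alpha-1}$ in the regimes $\alpha>1$, $\alpha=1$, $0<\alpha<1$, and in every case equals a constant multiple of $|(t_1-t_0)^\alpha-(t_1-t_0-h)^\alpha-h^\alpha|$, which tends to $0$ as $h\to 0^+$. Consequently the supremum over $f\in F$ of the $L^p$-translation modulus vanishes uniformly, establishing $(ii)$.

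Having both conditions, Simon's theorem yields that $J^\alpha_{t_0,t}(F)$ is relatively compact whenever \eqref{novahip} holds, giving the sufficiency. Conversely, if $J^\alpha_{t_0,t}$ is compact then $J^\alpha_{t_0,t}(F)$ is relatively compact, and Simon's condition $(i)$ together with the Fubini identity above forces \eqref{novahip}. The main technical obstacle is the singular regime $0<\alpha<1$, where the kernel $(t-s)^{\alpha-1}$ is unbounded near the diagonal; here the explicit evaluation of $\int_0^{t_1-t_0-h}\bigl[u^{\alpha-1}-(u+h)^{\alpha-1}\bigr]du$ (exploiting monotonicity to drop the absolute value) is what saves the argument and produces a uniform bound tending to $0$.
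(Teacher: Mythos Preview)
Your proof is correct and follows essentially the same route as the paper: the Fubini identity $\int_{t_0^*}^{t_1^*}J^\alpha_{t_0,t}f(t)\,dt = J^{1+\alpha}_{t_0,t_1^*}f(t_1^*)-J^{1+\alpha}_{t_0,t_0^*}f(t_0^*)$ to match Simon's condition (i) with \eqref{novahip}, and the same split of the translation increment combined with Minkowski's integral inequality to handle condition (ii). The only cosmetic difference is that you evaluate the residual integral $\int_0^{t_1-t_0-h}\bigl|(u+h)^{\alpha-1}-u^{\alpha-1}\bigr|\,du$ in closed form, whereas the paper simply asserts its limit is zero.
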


\begin{proof} Given $t_0<t_0^*<t_1^*<t_1$, consider the linear operator %
$$\begin{array}{lclc}J_{t_0^*,t_1^*}:&L^p(t_0,t_1;X)&\rightarrow& X,\\
&f&\mapsto& J_{t_0^*,t_1^*}f:=\int_{t_0^*}^{t_1^*}f(s)\,ds.\end{array}$$

Observe that $J_{t_0^*,t_1^*}$ is a linear operator and also satisfies
$$\left\|J_{t_0^*,t_1^*}f\right\|_X\leq(t_1^*-t_0^*)^{1-(1/p)}\|f\|_{L^p(t_0,t_1;X)},$$
i.e., $J_{t_0^*,t_1^*}$ is a bounded operator from $L^p(t_0,t_1;X)$ into $X$.

Now, since we are assuming that $J_{t_0,t}^\alpha: L^p(t_0,t_1;X) \rightarrow L^p(t_0,t_1;X)$ is a compact operator, we conclude that
$$\begin{array}{lclc}J_{t_0^*,t_1^*}\circ J_{t_0,t}^\alpha:&L^p(t_0,t_1;X)&\rightarrow& X,\\
&f&\mapsto& \int_{t_0^*}^{t_1^*}J_{t_0,s}^\alpha f(s)\,ds,\end{array}$$
is a compact operator. Furthermore, notice that
\begin{multline*}\int_{t_0^*}^{t_1^*}J_{t_0,s}^\alpha f(s)\,ds=\dfrac{1}{\Gamma(\alpha)}\int_{t_0^*}^{t_1^*}\left[\int_{t_0}^s(s-w)^{\alpha-1} f(w)\,dw\right]\,ds\\
\hspace*{2.5cm}=\dfrac{1}{\Gamma(\alpha)}\int_{t_0^*}^{t_1^*}\left[\int_{t_0}^{t_0^*}(s-w)^{\alpha-1} f(w)\,dw\right]\,ds\\+\dfrac{1}{\Gamma(\alpha)}\int_{t_0^*}^{t_1^*}\left[\int_{t_0^*}^s(s-w)^{\alpha-1} f(w)\,dw\right]\,ds,
\end{multline*}
and therefore Theorem \ref{FubiniFubini} and Corollary \ref{FubiniFubini2} ensure that
\begin{multline*}\int_{t_0^*}^{t_1^*}J_{t_0,s}^\alpha f(s)\,ds=\dfrac{1}{\Gamma(\alpha)}\int_{t_0}^{t_0^*}\left[\int_{t_0^*}^{t_1^*}(s-w)^{\alpha-1} \,ds\right]f(w)\,dw\\+\dfrac{1}{\Gamma(\alpha)}\int_{t_0^*}^{t_1^*}\left[\int_{w}^{t_1^*}(s-w)^{\alpha-1}\,ds\right] f(w)\,dw,
\end{multline*}
what is equivalent to
$$\int_{t_0^*}^{t_1^*}J_{t_0,s}^\alpha f(s)\,ds=\dfrac{1}{\Gamma(\alpha+1)}\int_{t_0}^{t_1^*}(t_1^*-w)^{\alpha}f(w)\,dw-\dfrac{1}{\Gamma(\alpha+1)}\int_{t_0}^{t_0^*}(t_0^*-w)^{\alpha}f(w)\,dw.$$

Hence, if $F$ is any bounded subset of $L^p(t_0,t_1;X)$, we know that
$$\Big[J_{t_0^*,t_1^*}\circ J_{t_0,t}^\alpha \Big](F)=\left\{J_{t_0,t_1^*}^{1+\alpha} f(t_1^*)-J_{t_0,t_0^*}^{1+\alpha} f(t_0^*)\,:\,f\in F\right\},$$
is a relatively compact set in $X$, as we wanted.

Conversely, let $F\subset L^p(t_0,t_1;X)$ be a bounded set. In order to prove that $J_{t_0,t}^\alpha F$ is relatively compact in $L^p(t_0,t_1;X)$, we need to verify that this set satisfies items $(i)$ and $(ii)$ of Theorem \ref{simon}. Since
$$\left\{\int_{t_1^*}^{t_0^*}{J_{t_0,s}^\alpha f(s)}\,ds\,:\,f\in F\right\}=\left\{J_{t_0,t_1^*}^{1+\alpha} f(t_1^*)-J_{t_0,t_0^*}^{1+\alpha} f(t_0^*)\,:\,f\in F\right\},$$
item $(i)$ becomes a direct consequence of \eqref{novahip}. Thus, to complete this proof we only need to prove that $J_{t_0,t}^\alpha F$ satisfies item $(ii)$ of Theorem \ref{simon}.

Thus, observe that for each $h>0$ (sufficiently small)
\begin{multline*}\Gamma(\alpha)\big\|J_{t_0,s+h}^\alpha f(s+h)-J_{t_0,s}^\alpha f(s)\big\|_X\,\leq\int_{t_0}^s\big|(s-w)^{\alpha-1}-(s+h-w)^{\alpha-1}\big|\|f(w)\|_X\,dw\\
+\int_{s}^{s+h}(s+h-w)^{\alpha-1}\|f(w)\|_X\,dw.\end{multline*}
By changing the variable $w=s-r$, we obtain
\begin{multline*}\Gamma(\alpha)\big\|J_{t_0,s+h}^\alpha f(s+h)-J_{t_0,s}^\alpha f(s)\big\|_X\leq\int_{0}^{s-t_0}\big|r^{\alpha-1}-(r+h)^{\alpha-1}\big|\|f(s-r)\|_X\,dr\\
+\int_{-h}^{0}(r+h)^{\alpha-1}\|f(s-r)\|_X\,dr.\end{multline*}
Therefore, Minkowski's inequality ensures that
\begin{multline*}\Gamma(\alpha)\left(\int_{t_0}^{t_1-h}\big\|J_{t_0,s+h}^\alpha f(s+h)-J_{t_0,s}^\alpha f(s)\big\|_X^p ds\right)^{1/p}\\\leq \left(\int_{t_0}^{t_1-h}\left[\int_{0}^{s-t_0}\big|r^{\alpha-1}-(r+h)^{\alpha-1}\big|\|f(s-r)\|_X\,dr\right]^p\,ds\right)^{1/p}\\
 +\left(\int_{t_0}^{t_1-h}\left[\int_{-h}^{0}(r+h)^{\alpha-1}\|f(s-r)\|_X\,dr\right]^p\,ds\right)^{1/p}=:\mathcal{I}_h+\mathcal{J}_h.\end{multline*}

Firstly, let us prove that $\sup_{f\in F}\mathcal{I}_h\rightarrow0$, when $h\rightarrow0^+$. To this end, change the variable $s=x+t_0$ to deduce the identity
$$\mathcal{I}_h=\left(\int_{0}^{t_1-t_0-h}\left[\int_{0}^{x}\big|r^{\alpha-1}-(r+h)^{\alpha-1}\big|\|f(x+t_0-r)\|_X\,dr\right]^p\,dx\right)^{1/p}.$$
Thus, Corollary \ref{minkowskiminkowski} ensures that
$$\mathcal{I}_h\leq\int_{0}^{t_1-t_0-h}\big|r^{\alpha-1}-(r+h)^{\alpha-1}\big|\left[\int_{r}^{t_1-t_0-h}\|f(x+t_0-r)\|^p_X\,dx\right]^{1/p}\,dr,$$
which, by changing the variable $x=y+r-t_0$, becomes
$$\mathcal{I}_h\leq\int_{0}^{t_1-t_0-h}\big|r^{\alpha-1}-(r+h)^{\alpha-1}\big|\left[\int_{t_0}^{t_1-h-r}\|f(y)\|^p_X\,dy\right]^{1/p}\,dr.$$
Finally, by changing $r=t_1-h-z$ we obtain
\begin{multline*}\mathcal{I}_h\leq\int_{t_0}^{t_1-h}\big|(t_1-h-z)^{\alpha-1}-(t_1-z)^{\alpha-1}\big|\left[\int_{t_0}^{z}\|f(y)\|^p_X\,dy\right]^{1/p}\,dz\\
\leq\|F\|_{L^p(t_0,t_1;X)}\int_{t_0}^{t_1-h}\big|(t_1-h-z)^{\alpha-1}-(t_1-z)^{\alpha-1}|\,dz,\end{multline*}
where $\|F\|_{L^p(t_0,t_1;X)}=\sup_{f\in F}\|f\|_{L^p(t_0,t_1;X)}<\infty$. The conclusion follows now directly from
$$\lim_{h\rightarrow0^+}\int_{t_0}^{t_1-h}\big|(t_1-h-z)^{\alpha-1}-(t_1-z)^{\alpha-1}|\,dz=0.$$

Secondly, let us prove that $\sup_{f\in F}\mathcal{J}_h\rightarrow0$, when $h\rightarrow0^+$. By applying Theorem \ref{minkowski} we deduce that
$$\mathcal{J}_h\leq\int_{-h}^{0}(r+h)^{\alpha-1}\left[\int_{t_0}^{t_1-h}\|f(s-r)\|^p_X\,ds\right]^{1/p}\,dr$$
which, by changing of variables $s=x+r$ and then $r=-y$, is equivalent to
$$\mathcal{J}_h\leq\int_{0}^{h}(h-y)^{\alpha-1}\left[\int_{t_0+y}^{t_1+y-h}\|f(x)\|^p_X\,dx\right]^{1/p}\,dy.$$

Hence, we achieve the inequality
$$\mathcal{J}_h\leq\|F\|_{L^p(t_0,t_1;X)}\int_{0}^{h}(h-y)^{\alpha-1}\,dy,$$
where $\|F\|_{L^p(t_0,t_1;X)}=\sup_{f\in F}\|f\|_{L^p(t_0,t_1;X)}<\infty$. The conclusion follows now directly from
$$\lim_{h\rightarrow0^+}\int_{0}^{h}(h-y)^{\alpha-1}\,dy=0.$$
\end{proof}

\begin{corollary}\label{coroaux} Let $1\leq p<\infty$ and $\alpha>0$. If $X$ is a finite dimensional Banach space, then the bounded operator
$$J_{t_0,t}^\alpha:L^p(t_0,t_1;X)\rightarrow L^p(t_0,t_1;X)$$
is compact.
\end{corollary}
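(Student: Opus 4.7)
The plan is to invoke Theorem \ref{compactrieman} directly: all that must be checked is that, for every bounded $F\subset L^p(t_0,t_1;X)$ and every $t_0<t_0^*<t_1^*<t_1$, the set
$$\mathcal{K}(F,t_0^*,t_1^*):=\left\{J_{t_0,t_1^*}^{1+\alpha}f(t_1^*)-J_{t_0,t_0^*}^{1+\alpha}f(t_0^*)\,:\,f\in F\right\}$$
is relatively compact in $X$. The decisive reduction is that in a finite dimensional Banach space, by the Heine--Borel theorem, relative compactness is equivalent to mere boundedness. So the entire task collapses to producing a uniform norm bound on the vectors in $\mathcal{K}(F,t_0^*,t_1^*)$.

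Next I would estimate, for any $t^*\in(t_0,t_1)$ and any $f\in L^p(t_0,t_1;X)$, the quantity
$$\big\|J^{1+\alpha}_{t_0,t^*}f(t^*)\big\|_X\leq \frac{1}{\Gamma(1+\alpha)}\int_{t_0}^{t^*}(t^*-w)^{\alpha}\|f(w)\|_X\,dw.$$
An application of Hölder's inequality (with conjugate exponent $q=p/(p-1)$ when $p>1$, and separately with the $L^\infty$ bound $(t^*-w)^\alpha\leq (t_1-t_0)^\alpha$ when $p=1$) shows that $\|J^{1+\alpha}_{t_0,t^*}f(t^*)\|_X\leq C_{p,\alpha,t_0,t_1}\|f\|_{L^p(t_0,t_1;X)}$, where the constant depends only on $p,\alpha,t_0,t_1$ and not on $t^*$ or $f$ (the singular weight $(t^*-w)^{\alpha q}$ is integrable precisely because $\alpha>0$). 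Applying this bound at both endpoints $t_0^*$ and $t_1^*$ and using the triangle inequality gives
$$\big\|J^{1+\alpha}_{t_0,t_1^*}f(t_1^*)-J^{1+\alpha}_{t_0,t_0^*}f(t_0^*)\big\|_X\leq 2C_{p,\alpha,t_0,t_1}\sup_{g\in F}\|g\|_{L^p(t_0,t_1;X)},$$
which is finite since $F$ is bounded.

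Thus $\mathcal{K}(F,t_0^*,t_1^*)$ is a bounded subset of the finite dimensional space $X$, hence relatively compact, so hypothesis \eqref{novahip} of Theorem \ref{compactrieman} is verified and $J_{t_0,t}^\alpha$ is compact. There is essentially no obstacle here: the only minor care required is the split between $p=1$ (where the Hölder pair is $(1,\infty)$) and $1<p<\infty$, and the verification that $\alpha q+1>0$ so the weight remains integrable; both are immediate.
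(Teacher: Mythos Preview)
Your proof is correct and follows essentially the same approach as the paper: both invoke Theorem \ref{compactrieman}, use H\"older's inequality to bound $\|J^{1+\alpha}_{t_0,t^*}f(t^*)\|_X$ in terms of $\|f\|_{L^p}$, and then appeal to Heine--Borel in the finite dimensional space $X$. The only cosmetic difference is that you separate the cases $p=1$ and $p>1$ explicitly, whereas the paper writes a single H\"older estimate whose constant degenerates correctly at $p=1$.
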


\begin{proof} Just observe that for $f\in L^p(t_0,t_1;X)$ and $t^*\in(t_0,t_1)$, Holder's inequality ensures that
\begin{multline*}\left\|\int_{t_0}^{t^*}(t^*-s)^{\alpha}f(s)\,ds\right\|_X\leq
\left[\int_{t_0}^{t^*}(t^*-s)^{(\alpha p)/(p-1)}\,ds\right]^{(p-1)/p}\|f\|_{L^p(t_0,t_1;X)}\\=\left[\dfrac{p-1}{(\alpha+1)p-1}\right]^{(p-1)/p}(t^*-t_0)^{\alpha+1-(1/p)}\|f\|_{L^p(t_0,t_1;X)}.\end{multline*}

The above estimate allows us to conclude that for any $F\subset L^p(t_0,t_1;X)$ bounded, the set $\{J_{t_0,t_1^*}^{1+\alpha} f(t_1^*)-J_{t_0,t_0^*}^{1+\alpha} f(t_0^*)\,:\,f\in F\}$ is also bounded in $X$. Since the dimension of $X$ is finite, then we deduce that %
$$\{J_{t_0,t_1^*}^{1+\alpha} f(t_1^*)-J_{t_0,t_0^*}^{1+\alpha} f(t_0^*)\,:\,f\in F\}$$
is relatively compact in $X$. Thus, Theorem \ref{compactrieman} gives us the compactness of the RL fractional integral operator in $L^p(t_0,t_1;X)$.
\end{proof}

To deal with the compactness of RL fractional integral in $L^\infty(t_0,t_1;X)$, let us first prove the following result.

\begin{theorem}\label{cntlinf} If $\alpha>0$ and $f\in L^\infty(t_0,t_1;X)$, it holds that $J_{t_0,t}^\alpha f\in C^0([t_0,t_1];X)$.
\end{theorem}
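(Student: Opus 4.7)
The plan is to prove continuity at each $t \in [t_0, t_1]$ by direct estimation, using the $L^\infty$ bound to reduce everything to explicit scalar integrals of the kernel. I set $J_{t_0,t_0}^\alpha f(t_0) := 0$ (the empty integral). Right-continuity at $t_0$ is immediate from
$$\|J_{t_0,t}^\alpha f(t)\|_X \leq \frac{\|f\|_{L^\infty(t_0,t_1;X)}}{\Gamma(\alpha)} \int_{t_0}^{t}(t-s)^{\alpha-1}\,ds = \frac{(t-t_0)^\alpha}{\Gamma(\alpha+1)}\|f\|_{L^\infty(t_0,t_1;X)} \xrightarrow[t\to t_0^+]{} 0.$$

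For continuity at an interior point $t$ from the right (with $h>0$ small enough that $t+h \leq t_1$), the key step is the standard decomposition
\begin{multline*}
\Gamma(\alpha)\bigl[J_{t_0,t+h}^\alpha f(t+h) - J_{t_0,t}^\alpha f(t)\bigr] \\ = \int_{t_0}^{t}\bigl[(t+h-s)^{\alpha-1} - (t-s)^{\alpha-1}\bigr] f(s)\,ds + \int_{t}^{t+h}(t+h-s)^{\alpha-1} f(s)\,ds.
\end{multline*}
Passing to norms and pulling out $\|f\|_{L^\infty(t_0,t_1;X)}$ reduces the problem to showing that both scalar integrals (with $f$ replaced by $1$) tend to $0$ as $h \to 0^+$. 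The second is exactly $h^\alpha/\alpha$. For the first, I split according to the sign of $\alpha - 1$: the map $x \mapsto x^{\alpha-1}$ is monotone, so the integrand has constant sign and its absolute value integrates explicitly to
$$\frac{1}{\alpha}\bigl|(t+h-t_0)^\alpha - (t-t_0)^\alpha - h^\alpha\bigr|,$$
which vanishes as $h \to 0^+$ in each case ($0<\alpha<1$, $\alpha=1$, $\alpha>1$). Left-continuity at any $t \in (t_0,t_1]$ is handled by the symmetric decomposition, splitting $J_{t_0,t}^\alpha f(t) - J_{t_0,t-h}^\alpha f(t-h)$ into an integral over $[t_0,t-h]$ of the kernel difference and a boundary integral over $[t-h,t]$, and estimating both by the same explicit formulas.

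The only real obstacle is bookkeeping the sign of $(t+h-s)^{\alpha-1} - (t-s)^{\alpha-1}$ in the case $0 < \alpha < 1$, where the kernel blows up at $s = t$; but since one never needs pointwise estimates of the kernel difference, only its integral (which is finite because $\alpha > 0$), the singularity is harmless. An alternative, shorter route would be to invoke the convolution representation $J_{t_0,t}^\alpha f = g_\alpha * f_{t_0}$ from Remark~\ref{remark5}(i), note that $g_\alpha \in L^1(0,t_1-t_0;\mathbb{R})$ and $f_{t_0} \in L^\infty(\mathbb{R};X)$, and apply the classical fact that convolution of an $L^1$ function with an $L^\infty$ function is (uniformly) continuous, via continuity of translation in $L^1$; but the direct argument above is more self-contained and fits the flow of the paper.
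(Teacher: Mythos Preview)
Your proof is correct and follows essentially the same approach as the paper: both split $J_{t_0,t+h}^\alpha f(t+h)-J_{t_0,t}^\alpha f(t)$ into the boundary integral over $[t,t+h]$ and the kernel-difference integral over $[t_0,t]$, pull out $\|f\|_{L^\infty}$, and evaluate the resulting scalar integrals explicitly to obtain the bound $\frac{1}{\Gamma(\alpha+1)}\bigl[h^\alpha+\bigl|h^\alpha+(t-t_0)^\alpha-(t+h-t_0)^\alpha\bigr|\bigr]\|f\|_{L^\infty}$. The paper handles both directions at once by taking generic $t>s$, while you treat right- and left-continuity and the endpoint $t_0$ separately, but the computations are identical.
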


\begin{proof} Initially, Holder's inequality ensures the estimate
$$\|J_{t_0,t}^\alpha f(t)\|_X\leq\dfrac{1}{\Gamma(\alpha)}\int_{t_0}^t{(t-w)^{\alpha-1}\|f(w)\|_X}\,dw\leq\dfrac{(t_1-t_0)^\alpha}{\Gamma(\alpha+1)}\|f\|_{L^\infty(t_0,t_1;X)},$$
for any $t\in[t_0,t_1]$, i.e, $J_{t_0,t}^\alpha f(t)$ exists for every $t\in[t_0,t_1]$.

Now, for $t,s\in[t_0,t_1]$ (assume without loss of generality that $t>s$) we have
\begin{multline*}\|J_{t_0,t}^\alpha f(t)-J_{t_0,s}^\alpha f(s)\|_X\leq \dfrac{1}{\Gamma(\alpha)}\int_{s}^{t}{(t-w)^{\alpha-1}\|f(w)\|_X}\,dw\\
+\dfrac{1}{\Gamma(\alpha)}\int_{t_0}^{s}{\big|(s-w)^{\alpha-1}-(t-w)^{\alpha-1}\big|\|f(w)\|_X}\,dw.\end{multline*}
But then, by Holder's inequality, we have that
\begin{multline*}\|J_{t_0,t}^\alpha f(t)-J_{t_0,s}^\alpha f(s)\|_X\leq \left[\dfrac{(t-s)^\alpha}{\Gamma(\alpha+1)}\right.\\\left.+\left|\dfrac{(t-s)^\alpha}{\Gamma(\alpha+1)}+\dfrac{(s-t_0)^\alpha}{\Gamma(\alpha+1)}-\dfrac{(t-t_0)^\alpha}{\Gamma(\alpha+1)}\right|\right]\|f\|_{L^\infty(t_0,t_1;X)}.\end{multline*}
The above estimate ensures that $J_{t_0,t}^\alpha f(t)$ is continuous in $[t_0,t_1]$.
\end{proof}

Last result allows us to understand that the compactness of RL fractional integral as an operator from $L^\infty(t_0,t_1;X)$ into itself can be proved by considering a characterization of compacts sets in $C^0([t_0,t_1];X)$, which has an interesting formulation proved by J. Simon. It worths to emphasize that J. Simon's Theorem fits better our needs then Arzela-Ascoli characterization of compact sets in $C^0([t_0,t_1];X)$.

\begin{theorem}\label{Arzelasimon}\cite[Theorem 1]{Si1} A set of functions $F\subset C^0([t_0,t_1];X)$ is relatively compact if, and only if:\vspace*{0.2cm}
\begin{itemize}
\item[(i)] $\left\{\int_{t_0^*}^{t_1^*}f(t)dt\,:\,f\in F\right\}$ is relatively compact in $X$, for every $t_0<t_0^*<t_1^*<t_1$;\vspace*{0.2cm}
\item[(ii)] $\lim_{h\rightarrow 0^+}\left[\sup_{f\in F}\left(\sup_{t\in[t_0,t_1-h]}\|f(t+h)-f(t)\|_X\right)\right]=0$.
\end{itemize}
\end{theorem}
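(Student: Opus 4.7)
The approach will be to reduce the stated criterion to the classical Arzelà–Ascoli theorem in $C^0([t_0,t_1];X)$, which states that $F$ is relatively compact if and only if it is uniformly equicontinuous and, for every $t\in[t_0,t_1]$, the set $\{f(t):f\in F\}$ is relatively compact in $X$. Condition $(ii)$ is precisely uniform equicontinuity phrased through a modulus of continuity, so the game is to match condition $(i)$ with pointwise relative compactness.

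For the necessity direction both items are essentially immediate. If $F$ is relatively compact in $C^0([t_0,t_1];X)$, then $F$ is equicontinuous by Arzelà–Ascoli, which gives $(ii)$. For $(i)$, the integration map $T:C^0([t_0,t_1];X)\to X$ defined by $Tf=\int_{t_0^*}^{t_1^*}f(s)\,ds$ is bounded linear with $\|Tf\|_X\leq(t_1^*-t_0^*)\sup_{t\in[t_0,t_1]}\|f(t)\|_X$, so continuity sends the relatively compact set $F$ to a relatively compact set $T(F)\subset X$.

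For sufficiency, assume $(i)$ and $(ii)$. The plan is to use an averaging trick to extract pointwise relative compactness from integral relative compactness. Fix $t\in(t_0,t_1)$ and $\varepsilon>0$; by $(ii)$ there exists $h_0>0$ such that for every $0<h\leq h_0$ with $t+h<t_1$ and every $f\in F$ we have $\|f(s)-f(t)\|_X<\varepsilon$ for all $s\in[t,t+h]$. Hence
$$\left\|\frac{1}{h}\int_{t}^{t+h}f(s)\,ds-f(t)\right\|_X<\varepsilon\quad\text{for every }f\in F.$$
By $(i)$ the set $\{\int_{t}^{t+h}f(s)\,ds:f\in F\}$ is relatively compact in $X$, hence totally bounded, and multiplication by $1/h$ preserves this. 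Consequently $\{f(t):f\in F\}$ lies within Hausdorff distance $\varepsilon$ of a totally bounded set; since $\varepsilon$ is arbitrary, $\{f(t):f\in F\}$ is itself totally bounded and, by completeness of $X$, relatively compact. The endpoints $t_0$ and $t_1$ are handled through equicontinuity: for any $\varepsilon>0$, the set $\{f(t_0):f\in F\}$ is within $\varepsilon$ of $\{f(t_0+\delta):f\in F\}$ for sufficiently small $\delta>0$, and the latter is already relatively compact by the interior argument; the same reasoning applies at $t_1$.

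The chief difficulty lies precisely in this endpoint step and in the observation that the class of relatively compact subsets of a Banach space is closed under Hausdorff limits; both are standard but easy to overlook, and both stem from condition $(i)$ being stated only for strict subintervals $t_0<t_0^*<t_1^*<t_1$. Once pointwise relative compactness is established on all of $[t_0,t_1]$ and $(ii)$ is recognized as uniform equicontinuity, a direct appeal to Arzelà–Ascoli closes the argument.
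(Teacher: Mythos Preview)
The paper does not give a proof of this theorem; it simply cites it from Simon \cite[Theorem~1]{Si1}. Your argument is therefore not being compared against anything in the paper itself, and it is correct as a self-contained proof. The reduction to the Banach-valued Arzel\`a--Ascoli theorem via the averaging approximation $f(t)\approx h^{-1}\int_t^{t+h}f$ is exactly the mechanism Simon uses in the $C^0$ case (and is the natural bridge between the integral condition $(i)$ and pointwise relative compactness). Your treatment of the endpoints and of the stability of total boundedness under Hausdorff limits is sound; the only cosmetic point worth noting is that condition $(ii)$, stated for $h>0$ only, does yield full two-sided equicontinuity because $\|f(t+h)-f(t)\|=\|f(s)-f(s-h)\|$ with $s=t+h$.
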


With these acquired insights in mind, we present the following compactness criteria to the RL fractional integral in $L^\infty(t_0,t_1;X)$.

\begin{theorem} Let $\alpha>0$. The bounded operator
$$J_{t_0,t}^\alpha:L^\infty(t_0,t_1;X)\rightarrow L^\infty(t_0,t_1;X)$$
is compact if, and only if, for any bounded set $F\subset L^\infty(t_0,t_1;X)$ it holds that
\begin{equation}\label{novahip1}\left\{\begin{array}{l}\left\{J_{t_0,t_1^*}^{1+\alpha} f(t_1^*)-J_{t_0,t_0^*}^{1+\alpha} f(t_0^*)\,:\,f\in F\right\}\textrm{ is relatively compact in } X,\vspace*{0.2cm}\\
\textrm{for every }t_0<t_0^*<t_1^*<t_1.\end{array}\right.\end{equation}
\end{theorem}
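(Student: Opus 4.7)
The plan is to mirror the proof of Theorem~\ref{compactrieman}, with two essential adjustments: the target characterization of compactness becomes Theorem~\ref{Arzelasimon} instead of Theorem~\ref{simon}, and the uniform-in-$f$ estimates simplify considerably because the bound $\|f(\cdot)\|_X\leq \|f\|_{L^\infty(t_0,t_1;X)}$ replaces the Minkowski integral-inequality bookkeeping used in the $L^p$ case. The enabling observation is Theorem~\ref{cntlinf}: for every $f\in L^\infty(t_0,t_1;X)$ the image $J^\alpha_{t_0,t}f$ already lies in $C^0([t_0,t_1];X)$. Since the $L^\infty$-norm of a continuous function equals its supremum norm, relative compactness in $C^0([t_0,t_1];X)$ implies relative compactness in $L^\infty(t_0,t_1;X)$, so it suffices to verify the criteria of Theorem~\ref{Arzelasimon} on $J^\alpha_{t_0,t}(F)$ for every bounded $F\subset L^\infty(t_0,t_1;X)$.

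For the necessity direction I would reproduce verbatim the argument of Theorem~\ref{compactrieman}: the evaluation map $J_{t_0^*,t_1^*}:L^\infty(t_0,t_1;X)\to X$, $f\mapsto \int_{t_0^*}^{t_1^*}f(s)\,ds$, is bounded (with norm at most $t_1^*-t_0^*$), hence $J_{t_0^*,t_1^*}\circ J^\alpha_{t_0,t}$ is compact under the standing hypothesis. Applying Theorem~\ref{FubiniFubini} and Corollary~\ref{FubiniFubini2} exactly as before rewrites this composition as $f\mapsto J^{1+\alpha}_{t_0,t_1^*}f(t_1^*)-J^{1+\alpha}_{t_0,t_0^*}f(t_0^*)$, which immediately yields \eqref{novahip1}.

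For sufficiency, given a bounded $F\subset L^\infty(t_0,t_1;X)$, condition~(i) of Theorem~\ref{Arzelasimon} for $J^\alpha_{t_0,t}(F)$ follows from \eqref{novahip1} together with the same Fubini identity $\int_{t_0^*}^{t_1^*}J^\alpha_{t_0,s}f(s)\,ds=J^{1+\alpha}_{t_0,t_1^*}f(t_1^*)-J^{1+\alpha}_{t_0,t_0^*}f(t_0^*)$ used in Theorem~\ref{compactrieman}. For condition~(ii), I would begin with the pointwise estimate
\begin{multline*}
\Gamma(\alpha)\bigl\|J^\alpha_{t_0,s+h}f(s+h)-J^\alpha_{t_0,s}f(s)\bigr\|_X\leq \int_{t_0}^{s}\bigl|(s-w)^{\alpha-1}-(s+h-w)^{\alpha-1}\bigr|\,\|f(w)\|_X\,dw\\
+\int_{s}^{s+h}(s+h-w)^{\alpha-1}\|f(w)\|_X\,dw,
\end{multline*}
insert $\|f(w)\|_X\leq \|F\|_{L^\infty(t_0,t_1;X)}:=\sup_{f\in F}\|f\|_{L^\infty(t_0,t_1;X)}<\infty$, and substitute $w=s-r$ to reach
$$\Gamma(\alpha)\bigl\|J^\alpha_{t_0,s+h}f(s+h)-J^\alpha_{t_0,s}f(s)\bigr\|_X\leq \|F\|_{L^\infty(t_0,t_1;X)}\left[\int_{0}^{s-t_0}\bigl|r^{\alpha-1}-(r+h)^{\alpha-1}\bigr|\,dr+\frac{h^\alpha}{\alpha}\right].$$
The right-hand side depends neither on $f\in F$ nor, after the final sup, delicately on $s$.

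The only step that requires care is showing that the bracketed quantity tends to $0$ uniformly in $s\in[t_0,t_1-h]$ as $h\to 0^+$. Settling the sign of $r^{\alpha-1}-(r+h)^{\alpha-1}$ (non-negative when $0<\alpha\leq 1$, non-positive when $\alpha>1$) reduces each integral to the closed form $\pm\bigl[(s-t_0)^\alpha-(s-t_0+h)^\alpha+h^\alpha\bigr]/\alpha$; subadditivity of $x\mapsto x^\alpha$ on $[0,\infty)$ when $\alpha\leq 1$, respectively uniform continuity of $x\mapsto x^\alpha$ on $[0,t_1-t_0+1]$ when $\alpha>1$, produces a majorant independent of $s$ that vanishes with $h$. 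This completes the verification of~(ii) of Theorem~\ref{Arzelasimon}, and hence the proof.
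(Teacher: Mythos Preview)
Your proposal is correct and follows the same architecture as the paper's proof: the necessity direction is handled by composing with the bounded evaluation $J_{t_0^*,t_1^*}$ and invoking the Fubini identity exactly as in Theorem~\ref{compactrieman}, and the sufficiency direction verifies the two conditions of Theorem~\ref{Arzelasimon} after noting (via Theorem~\ref{cntlinf}) that $J^\alpha_{t_0,t}(F)\subset C^0([t_0,t_1];X)$.

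The only place where the two arguments diverge is the final uniformity step for condition~(ii). The paper, after reaching the closed form
\[
\Gamma(\alpha+1)\bigl\|J^\alpha_{t_0,s+h}f(s+h)-J^\alpha_{t_0,s}f(s)\bigr\|_X\leq\Bigl[h^\alpha+\bigl|h^\alpha+(s-t_0)^\alpha-(s+h-t_0)^\alpha\bigr|\Bigr]\|f\|_{L^\infty(t_0,t_1;X)},
\]
asserts that the bracket is non-increasing in $s$ and evaluates it at $s=t_0$ to obtain the clean bound $h^\alpha/\Gamma(\alpha+1)$. Your approach instead splits into the cases $0<\alpha\le1$ and $\alpha>1$, removes the absolute value accordingly, and controls the resulting expression via subadditivity of $x\mapsto x^\alpha$ in the first case and uniform continuity on a compact interval in the second. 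Your route is slightly less sharp (it does not produce the single constant $h^\alpha$), but it is more robust: in fact the bracket above is non-\emph{decreasing} in $s$ when $\alpha\neq1$, so the paper's monotonicity claim is stated with the wrong direction, and it is precisely your case analysis (or an equivalent argument) that is needed to justify that the supremum over $s\in[t_0,t_1-h]$ still tends to $0$.
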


\begin{proof} If we assume that $J_{t_0,t}^\alpha$ is a compact operator from $L^\infty(t_0,t_1;X)$ into itself, then proceeding exactly as in the proof of Theorem \ref{compactrieman} we deduce that
$$\left\{J_{t_0,t_1^*}^{1+\alpha} f(t_1^*)-J_{t_0,t_0^*}^{1+\alpha} f(t_0^*)\,:\,f\in F\right\}$$
is relatively compact in $X$, for any bounded set $F\subset L^\infty(t_0,t_1;X)$.

Conversely, let $F\subset L^\infty(t_0,t_1;X)$ be a bounded set. In order to prove that $J_{t_0,t}^\alpha F$ is relatively compact in $L^\infty(t_0,t_1;X)$ (what in this case is equivalent to prove that $J_{t_0,t}^\alpha F$ is relatively compact in $C^0([t_0,t_1];X)$, see Theorem \ref{cntlinf}), we need to verify that this set satisfies items $(i)$ and $(ii)$ of Theorem \ref{Arzelasimon}. Since
$$\left\{\int_{t_1^*}^{t_0^*}{J_{t_0,s}^\alpha f(s)}\,ds\,:\,f\in F\right\}=\left\{J_{t_0,t_1^*}^{1+\alpha} f(t_1^*)-J_{t_0,t_0^*}^{1+\alpha} f(t_0^*)\,:\,f\in F\right\},$$
item $(i)$ becomes a direct consequence of \eqref{novahip1}. Thus, to complete this proof we need to prove that $J_{t_0,t}^\alpha F$ satisfies item $(ii)$ of Theorem \ref{Arzelasimon}.

Observe that for each $h>0$ (sufficiently small)
\begin{multline*}\Gamma(\alpha)\big\|J_{t_0,s+h}^\alpha f(s+h)-J_{t_0,s}^\alpha f(s)\big\|_X\,\leq\int_{t_0}^s\Big|(s-w)^{\alpha-1}-(s+h-w)^{\alpha-1}\Big|\|f(w)\|_X\,dw\\
+\int_{s}^{s+h}(s+h-w)^{\alpha-1}\|f(w)\|_X\,dw.\end{multline*}
But then, Holder's inequality ensures that
\begin{multline*}\Gamma(\alpha+1)\big\|J_{t_0,s+h}^\alpha f(s+h)-J_{t_0,s}^\alpha f(s)\big\|_X\,\\\leq\Big[h^\alpha+\Big|h^\alpha+(s-t_0)^\alpha-(s+h-t_0)^\alpha\Big|\Big]\|f\|_{L^\infty(t_0,t_1;X)},\end{multline*}
for every $s\in[t_0,t_1-h]$. Since $[t_0,t_1-h]\ni s\mapsto h^\alpha+\big|h^\alpha+(s-t_0)^\alpha-(s+h-t_0)^\alpha\big|$ is a non increasing function, we deduce that
$$\sup_{f\in F}\left(\sup_{s\in[t_0,t_1-h]}\|J_{t_0,s+h}^\alpha f(s+h)-J_{t_0,s}^\alpha f(s)\|_X\right)\leq\left[\dfrac{h^\alpha}{\Gamma(\alpha+1)}\right]\|F\|_{L^\infty(t_0,t_1;X)},$$
where $\|F\|_{L^\infty(t_0,t_1;X)}=\sup_{f\in F}\|f\|_{L^\infty(t_0,t_1;X)}<\infty$. The conclusion follows now directly.
\end{proof}

\begin{corollary} Let $\alpha>0$. If $X$ is a finite dimensional Banach space, then the bounded operator
$$J_{t_0,t}^\alpha:L^\infty(t_0,t_1;X)\rightarrow L^\infty(t_0,t_1;X)$$
is compact.
\end{corollary}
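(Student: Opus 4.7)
The plan is to mimic Corollary \ref{coroaux} verbatim, replacing the $L^p$-Hölder bound with the trivial $L^\infty$-bound. By the preceding theorem (the $L^\infty$ compactness criterion), it suffices to show that for every bounded $F\subset L^\infty(t_0,t_1;X)$ and every $t_0<t_0^*<t_1^*<t_1$, the set
$$\left\{J_{t_0,t_1^*}^{1+\alpha} f(t_1^*)-J_{t_0,t_0^*}^{1+\alpha} f(t_0^*)\,:\,f\in F\right\}$$
is relatively compact in $X$. Since $X$ is finite dimensional, this reduces to showing the set is \emph{bounded} in $X$.

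To bound the set, I would estimate each summand directly. For any $t^*\in(t_0,t_1)$ and $f\in L^\infty(t_0,t_1;X)$,
$$\big\|J_{t_0,t^*}^{1+\alpha}f(t^*)\big\|_X\leq \dfrac{1}{\Gamma(\alpha+1)}\int_{t_0}^{t^*}(t^*-s)^\alpha\|f(s)\|_X\,ds\leq \dfrac{(t^*-t_0)^{\alpha+1}}{\Gamma(\alpha+2)}\|f\|_{L^\infty(t_0,t_1;X)}.$$
Applying this to both endpoints $t_0^*$ and $t_1^*$ and using the triangle inequality gives a uniform bound in terms of $\|F\|_{L^\infty(t_0,t_1;X)}:=\sup_{f\in F}\|f\|_{L^\infty(t_0,t_1;X)}<\infty$, which is finite by assumption.

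Once boundedness is established, finite dimensionality of $X$ (Heine–Borel) yields relative compactness, and the preceding theorem closes the argument. There is no real obstacle here; the proof is a direct transcription of Corollary \ref{coroaux} with Hölder's inequality replaced by the elementary $L^\infty$ estimate above, and it could even be subsumed into that corollary as a unified statement valid for $1\leq p\leq\infty$.
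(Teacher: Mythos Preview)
Your proposal is correct and follows precisely the approach the paper intends: the paper's own proof simply reads ``Just follow the same steps of the proof of Corollary \ref{coroaux},'' and you have carried this out explicitly, replacing the H\"older estimate by the elementary $L^\infty$ bound to obtain boundedness, then invoking finite dimensionality and the $L^\infty$ compactness criterion.
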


\begin{proof} Just follow the same steps of the proof of Corollary \ref{coroaux}.
\end{proof}

\subsection{On the Continuity in the Order of Integration} Recall that item $(ii)$ of Remark \ref{remark5} does not discuss the general case $f\in L^p(t_0,t_1;X)$. This is why we present our next results. It worths to emphasize that besides being a generalization of the classical result discussed by S. G. Samko et al. in his book \cite{SaKiMa1}, the proof we present to Lemma \ref{auxtextfunc} discuss a technique that motivates the subject discussed in Theorem \ref{gerinf}.

\begin{lemma}\label{auxtextfunc}
  Let $1\leq p\leq\infty$. Then
\begin{equation*}\lim_{\alpha\rightarrow0^+}{\big\|J_{t_0,t}^\alpha\phi-\phi\big\|_{L^p(t_0,t_1;{X})}}=0,\vspace*{0.2cm}\end{equation*}
for each $\phi\in C_c^\infty([t_0,t_1];X)$.
\end{lemma}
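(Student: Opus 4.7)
Because $\phi\in C_c^\infty([t_0,t_1];X)$ has compact support in the interior, we have $\phi(t_0)=0$ and $\phi'\in L^\infty(t_0,t_1;X)$. The key first step is to integrate by parts in the defining integral of $J^\alpha_{t_0,t}\phi(t)$, differentiating $\phi(s)$ and antidifferentiating $(t-s)^{\alpha-1}$ to $-(t-s)^\alpha/\alpha$. Both boundary terms vanish (at $s=t$ because $(t-s)^\alpha\to 0$, at $s=t_0$ because $\phi(t_0)=0$), producing the identity
\[J^\alpha_{t_0,t}\phi(t)=\frac{1}{\Gamma(\alpha+1)}\int_{t_0}^t(t-s)^\alpha\phi'(s)\,ds,\qquad t\in[t_0,t_1].\]
The purpose of this rewriting is to replace the kernel $(t-s)^{\alpha-1}$, which concentrates into a Dirac-like singularity at $s=t$ as $\alpha\to 0^+$, with the uniformly bounded kernel $(t-s)^\alpha$, which tends to $1$ pointwise on $\{s<t\}$.

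\textbf{Uniform estimate.} Using the fundamental theorem of calculus $\phi(t)=\int_{t_0}^t\phi'(s)\,ds$, one obtains
\[J^\alpha_{t_0,t}\phi(t)-\phi(t)=\int_{t_0}^t\left[\frac{(t-s)^\alpha}{\Gamma(\alpha+1)}-1\right]\phi'(s)\,ds.\]
The plan is to bound $\|\phi'(s)\|_X$ by $\|\phi'\|_{L^\infty(t_0,t_1;X)}$, substitute $u=t-s$, and use that the integrand is nonnegative to get
\[\sup_{t\in[t_0,t_1]}\bigl\|J^\alpha_{t_0,t}\phi(t)-\phi(t)\bigr\|_X\le \|\phi'\|_{L^\infty(t_0,t_1;X)}\int_0^{t_1-t_0}\left|\frac{u^\alpha}{\Gamma(\alpha+1)}-1\right|du.\]
One then checks that the right-hand side tends to $0$ as $\alpha\to 0^+$ by dominated convergence: the integrand converges pointwise to $0$ for every $u>0$, and for $\alpha\in(0,1)$ is dominated by $1+\max(1,t_1-t_0)/\inf_{\alpha\in(0,1)}\Gamma(\alpha+1)$, a finite constant.

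\textbf{Conclusion and main difficulty.} The preceding step delivers uniform convergence $J^\alpha_{t_0,t}\phi\to\phi$ on $[t_0,t_1]$, which on a bounded interval immediately yields $L^p$ convergence for every $1\le p\le\infty$, proving the lemma. The only real obstacle is the integration-by-parts step: a direct attack on the original formula would have to contend with the singular factor $(t-s)^{\alpha-1}/\Gamma(\alpha)$, whose $L^1$-mass equals $(t-t_0)^\alpha/\Gamma(\alpha+1)\to 1$ while the kernel itself concentrates at $s=t$, so a dominated-convergence argument on the unmodified integrand is infeasible. It is precisely the smoothness of $\phi$ and the boundary condition $\phi(t_0)=0$ that allow the singular kernel to be traded for a bounded one, after which the $\alpha\to 0^+$ limit becomes transparent.
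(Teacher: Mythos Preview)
Your proof is correct. It shares with the paper's proof the crucial first move: integration by parts (using $\phi(t_0)=0$) to replace the singular kernel $(t-s)^{\alpha-1}/\Gamma(\alpha)$ by the bounded kernel $(t-s)^\alpha/\Gamma(\alpha+1)$, yielding the common identity
\[
J^\alpha_{t_0,t}\phi(t)-\phi(t)=\int_{t_0}^t\left[\frac{(t-s)^\alpha}{\Gamma(\alpha+1)}-1\right]\phi'(s)\,ds.
\]
Where you diverge is in the finishing step. You bound the integrand and invoke dominated convergence on $\int_0^{t_1-t_0}\bigl|u^\alpha/\Gamma(\alpha+1)-1\bigr|\,du$, which is the cleanest way to get the limit. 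The paper instead applies the Mean Value Theorem to the map $w\mapsto (t-s)^w/\Gamma(w+1)$, bringing in the Digamma function and producing an explicit bound of the form $C\alpha$; this gives a quantitative rate and, more to the point, introduces the computation $\frac{d}{dw}\bigl[(t-s)^w/\Gamma(w+1)\bigr]=(t-s)^w[\ln(t-s)-\psi(w+1)]/\Gamma(w+1)$ that the paper reuses verbatim when identifying the infinitesimal generator of the semigroup $\{J^\alpha_{t_0,t}\}$ in the subsequent theorem. So your route is more economical for the lemma itself, while the paper's is deliberately chosen to set up machinery for what follows.
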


\begin{proof} We may apply integration by parts to deduce that
$$J_{t_0,t}^\alpha\phi(t)=\dfrac{1}{\Gamma(\alpha+1)}\int_{t_0}^t(t-s)^\alpha\phi^\prime(s)\,ds,$$
for every $t\in [t_0,t_1]$.

Consider $\{r_n\}_{n=1}^\infty$ a sequence of positive real values such that $\lim_{n\rightarrow\infty}{r_n}=0$. We may assume, without loss of generality, that $r_n\in(0,1/2)$, for every $n\in\mathbb{N}$.

Thus, we have that
$$J_{t_0,t}^{r_n} \phi(t)-\phi(t)=\int_{t_0}^t{\left[\dfrac{(t-s)^{r_n}}{\Gamma(r_n+1)}-1\right]\phi^\prime(s)}\,ds,$$
for every $t\in[t_0,t_1]$.

Now, for each $t_0\leq s< t\leq t_1$, consider the continuously differentiable function $\nu_{t,s}:[0,1]\rightarrow\mathbb{R}$ given by
$$\nu_{t,s}(w)=\dfrac{(t-s)^w}{\Gamma(w+1)}.$$
Since
\begin{equation}\label{ajudaequili}\dfrac{d}{dw}\Big[\nu_{t,s}(w)\Big]=\dfrac{(t-s)^{w}\big[\ln(t-s)-\psi(w+1)\big]}{\Gamma(w+1)},\end{equation}
for every $w\in[0,1]$, the Main Value Theorem ensures that
$$\left|\dfrac{(t-s)^{r_n}}{\Gamma(r_n+1)}-1\right|
=r_n\left[\dfrac{(t-s)^{\xi_{t,s,n}}\big|\ln(t-s)-\psi(\xi_{t,s,n}+1)\big|}{\Gamma(\xi_{t,s,n}+1)}\right],$$
for some $\xi_{t,s,n}\in(0,r_n).$ Above, function $\psi(z)$ stands for the Digamma function (for more details see \cite{AbSt1}).

In this way, we obtain that
\begin{multline}\label{ultimadesinova-01}\left\|J_{t_0,t}^{r_n} \phi(t)-\phi(t)\right\|_X\leq
r_n\\\times\int_{t_0}^t{\left[\dfrac{(t-s)^{\xi_{t,s,n}-(1/2)}\Big[\big|(t-s)^{1/2}\ln(t-s)\big|+(t-s)^{1/2}\big|\psi(\xi_{t,s,n}+1)\big|\Big]}{\Gamma(\xi_{t,s,n}+1)}\right]\|\phi^\prime(s)\|_X}\,ds,\end{multline}
for every $t\in[t_0,t_1]$.  Since function $r^{1/2}\ln(r)$ is bounded in $[0,t_1-t_0]$, we deduce that
\begin{equation}\label{ultimadesinova}\left\|J_{t_0,t}^{r_n} \phi(t)-\phi(t)\right\|_X\leq
\Big[M_1r_n\sigma_n(t)\Big]\max_{s\in[t_0,t_1]}\|\phi^\prime(s)\|_X,\end{equation}
for almost every $t\in[t_0,t_1]$, where
\begin{multline}\label{ultimadesinova+01} \sigma_n(t)=\int_{t_0}^t{(t-s)^{\xi_{t,s,n}-(1/2)}}\,ds\qquad\textrm{and}\\
M_1=\max_{r\in[0,1]}{\left[\dfrac{1+\big|\psi(r+1)\big|}{\Gamma(r+1)}\right]}\left[\max_{r\in[0,t_1-t_0]}{|r^{1/2}\ln(r)|}+(t_1-t_0)^{1/2}\right].\end{multline}

Now just observe that
$$(t-s)^{\xi_{t,s,n}-(1/2)}\leq(t-s)^{-(1/2)}+(t-s)^{r_n-(1/2)},$$
for every $t_0\leq s< t\leq t_1$. Thus, we deduce the existence of $M_2>0$ such that
\begin{equation}\label{ultimadesinova2}\sigma_n(t)\leq {\dfrac{(t_1-t_0)^{1/2}}{1/2}+\dfrac{(t_1-t_0)^{r_n+(1/2)}}{r_n+(1/2)}}\leq M_2,\end{equation}
for every $t\in[t_0,t_1]$ and $n\in\mathbb{N}$.

Taking into account \eqref{ultimadesinova} and \eqref{ultimadesinova2}, we obtain
\begin{equation*}\left\|J_{t_0,t}^{r_n} \phi-\phi\right\|_{L^p(t_0,t_1;X)}\leq
\Big[M_1M_2(t_1-t_0)^{1/p}\max_{s\in[t_0,t_1]}\|\phi^\prime(s)\|_X\Big]r_n,\end{equation*}
for $1\leq p<\infty$, and
\begin{equation*}\left\|J_{t_0,t}^{r_n} \phi-\phi\right\|_{L^\infty(t_0,t_1;X)}\leq
\Big[M_1M_2\max_{s\in[t_0,t_1]}\|\phi^\prime(s)\|_X\Big]r_n.\end{equation*}

Therefore, $\{J_{t_0,t}^{r_n}\phi\}_{n=1}^\infty$ converges to $\phi$ in the topology of $L^p(t_0,t_1;X)$. If we recall that sequence $\{r_n\}_{n=1}^\infty$ was chosen arbitrarily, we achieve the desired result.  \end{proof}

\begin{theorem}\label{auxtextfunc2}
  Let $1\leq p\leq\infty$. It holds that
\begin{equation*}\lim_{\alpha\rightarrow0^+}{\big\|J_{t_0,t}^\alpha f-f\big\|_{L^p(t_0,t_1;{X})}}=0,\end{equation*}
for every $f\in L^p(t_0,t_1;{X})$.
\end{theorem}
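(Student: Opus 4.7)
The plan is a standard $\varepsilon/3$ density argument: transfer the convergence established in Lemma \ref{auxtextfunc} for test functions to the whole space $L^p(t_0,t_1;X)$ by combining a uniform operator bound on $J_{t_0,t}^\alpha$ near $\alpha = 0$ with the density of test functions.

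First, I would observe that Theorem \ref{minkowskiseq} yields the operator norm estimate $\|J_{t_0,t}^\alpha\|_{\mathcal{L}(L^p(t_0,t_1;X))} \le (t_1-t_0)^\alpha/\Gamma(\alpha+1)$. Since the right-hand side is continuous in $\alpha$ and tends to $1$ as $\alpha \to 0^+$, I can fix $\alpha_0 \in (0,1]$ and a constant $M \ge 1$ such that
$$\|J_{t_0,t}^\alpha g\|_{L^p(t_0,t_1;X)} \le M\,\|g\|_{L^p(t_0,t_1;X)}\quad\text{for every } g \in L^p(t_0,t_1;X) \text{ and every } \alpha \in (0,\alpha_0].$$
This uniform bound is what allows replacing $f$ by a smooth approximation without losing control as $\alpha \to 0^+$.

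Now I fix $f \in L^p(t_0,t_1;X)$ and $\varepsilon > 0$. By the density asserted in Remark \ref{remarktestf}(ii), I choose $\phi \in C^\infty_c([t_0,t_1];X)$ with $\|f - \phi\|_{L^p(t_0,t_1;X)} < \varepsilon/(3M)$. Lemma \ref{auxtextfunc} then supplies some $\alpha_1 \in (0,\alpha_0)$ such that $\|J_{t_0,t}^\alpha \phi - \phi\|_{L^p(t_0,t_1;X)} < \varepsilon/3$ whenever $0 < \alpha < \alpha_1$. For such $\alpha$, the triangle inequality together with the uniform bound produces
$$\|J_{t_0,t}^\alpha f - f\|_{L^p(t_0,t_1;X)} \le \|J_{t_0,t}^\alpha(f-\phi)\|_{L^p} + \|J_{t_0,t}^\alpha \phi - \phi\|_{L^p} + \|\phi - f\|_{L^p} \le M\cdot\frac{\varepsilon}{3M} + \frac{\varepsilon}{3} + \frac{\varepsilon}{3M} \le \varepsilon,$$
which is exactly the desired limit.

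I do not anticipate any serious obstacle: the genuinely analytic work — the integration by parts in $J_{t_0,t}^\alpha \phi$, the mean value theorem applied to $\nu_{t,s}(w)=(t-s)^w/\Gamma(w+1)$, and the integrable majorant controlling $r^{1/2}|\ln r|$ near $0$ — has already been dispatched inside Lemma \ref{auxtextfunc}. The present theorem is a clean corollary of two ingredients the paper has already made available, namely (i) convergence on a dense subclass and (ii) a uniform norm bound for the operators $J_{t_0,t}^\alpha$ as $\alpha \to 0^+$; given those, the density argument is automatic and applies uniformly for every $1 \le p \le \infty$.
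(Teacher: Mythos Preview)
Your proposal is correct and follows essentially the same $\varepsilon/3$ density argument as the paper: approximate $f$ by $\phi\in C^\infty_c([t_0,t_1];X)$ via Remark~\ref{remarktestf}(ii), invoke Lemma~\ref{auxtextfunc} on $\phi$, and control $J_{t_0,t}^\alpha(f-\phi)$ by the operator bound of Theorem~\ref{minkowskiseq}. The only cosmetic difference is that you extract a single uniform constant $M$ for $\alpha\in(0,\alpha_0]$ beforehand, whereas the paper absorbs the $\alpha$-dependent bound $(t_1-t_0)^\alpha/\Gamma(\alpha+1)$ directly into the choice of $\phi_\varepsilon$; both amount to the same thing.
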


\begin{proof} Let $f\in L^p(t_0,t_1;{X})$. For any $\varepsilon>0$,  choose $\phi_\varepsilon\in C_c^\infty([t_0,t_1];X)$ such that
\begin{equation}\label{desinovasera}\|f-\phi_\varepsilon\|_{L^p(t_0,t_1;X)}\leq \min{\left\{\dfrac{\varepsilon\Gamma(\alpha+1)}{2(t_1-t_0)^\alpha},\dfrac{\varepsilon}{2}\right\}}.\end{equation}

Now observe that
\begin{multline*}\big\|J_{t_0,t}^\alpha f-f\big\|_{L^p(t_0,t_1;{X})}\leq\big\|J_{t_0,t}^\alpha f-J_{t_0,t}^\alpha \phi_\varepsilon\big\|_{L^p(t_0,t_1;{X})}+\big\|J_{t_0,t}^\alpha \phi_\varepsilon-\phi_\varepsilon\big\|_{L^p(t_0,t_1;{X})}\\+\big\|\phi_\varepsilon-f\big\|_{L^p(t_0,t_1;{X})}.\end{multline*}

Thus, Theorem \ref{minkowskiseq}, Lemma \ref{auxtextfunc} and \eqref{desinovasera} ensure that
$$\lim_{\alpha\rightarrow0^+}\big\|J_{t_0,t}^\alpha f-f\big\|_{L^p(t_0,t_1;{X})}\leq\varepsilon.$$
Since $\varepsilon>0$ was chosen arbitrarily, the proof of the theorem is complete.\end{proof}

We finally present the result that discuss the uniform continuity of the RL fractional integral. It worths to emphasize that this proof is based on the proof given in \cite[Theorem 2.6]{SaKiMa1}.

\begin{theorem}\label{uniformconti01}
Let $1\leq p\leq\infty$. For any $\alpha_0>0$ it holds that
\begin{equation*}\lim_{\alpha\rightarrow\alpha_0}{\big\|J_{t_0,t}^\alpha -J_{t_0,t}^{\alpha_0}\big\|_{\mathcal{L}(L^p(t_0,t_1;{X}),L^p(t_0,t_1;{X}))}}=0.\end{equation*}
\end{theorem}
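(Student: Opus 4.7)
The plan is to exploit the convolution structure noted in Remark \ref{remark5}(i). For $f\in L^p(t_0,t_1;X)$ and $g_\alpha$ as in \eqref{galphafunc}, extending $f$ by zero outside $[t_0,t_1]$, we have
$$J_{t_0,t}^\alpha f(t)-J_{t_0,t}^{\alpha_0}f(t)=\int_{t_0}^{t}\bigl[g_\alpha(t-s)-g_{\alpha_0}(t-s)\bigr]f(s)\,ds.$$
The strategy is to reduce the operator-norm estimate to an $L^1$-convergence statement on scalar kernels.

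\textbf{Step 1 (reduction to the kernel).} First I would argue that for any $g\in L^1(0,t_1-t_0;\mathbb{R})$ and any $h\in L^p(t_0,t_1;X)$, the kernel estimate
$$\Bigl\|\int_{t_0}^{t}g(t-s)h(s)\,ds\Bigr\|_{L^p(t_0,t_1;X)}\le \|g\|_{L^1(0,t_1-t_0)}\,\|h\|_{L^p(t_0,t_1;X)}$$
holds uniformly in $1\le p\le\infty$. For $1\le p<\infty$ this is exactly the argument used in the proof of Theorem \ref{continuity} (change of variables, then Corollary \ref{minkowskiminkowski}), but with the absolute-value kernel $|g|$ in place of $s^{\alpha-1}/\Gamma(\alpha)$. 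For $p=\infty$ it follows from Hölder's inequality applied pointwise, as in the proof of Theorem \ref{minkowskiseq}. Applying this with $g=g_\alpha-g_{\alpha_0}$ and $h=f$ and taking the supremum over $\|f\|_{L^p}\le 1$ yields
$$\bigl\|J_{t_0,t}^\alpha-J_{t_0,t}^{\alpha_0}\bigr\|_{\mathcal{L}(L^p(t_0,t_1;X),L^p(t_0,t_1;X))}\le \bigl\|g_\alpha-g_{\alpha_0}\bigr\|_{L^1(0,t_1-t_0;\mathbb{R})}.$$

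\textbf{Step 2 ($L^1$ convergence of the kernels).} It remains to show that $\|g_\alpha-g_{\alpha_0}\|_{L^1(0,t_1-t_0)}\to 0$ as $\alpha\to\alpha_0$. Restricting to $\alpha$ in a compact neighborhood $[\alpha_0-\delta,\alpha_0+\delta]$ with $0<\delta<\alpha_0/2$, the function $\alpha\mapsto 1/\Gamma(\alpha)$ is continuous and hence bounded there, and for each fixed $s>0$ one has $g_\alpha(s)\to g_{\alpha_0}(s)$. For the dominating function, I would use the elementary bound
$$s^{\alpha-1}\le s^{\alpha_0-\delta-1}+s^{\alpha_0+\delta-1}\qquad\text{for every } s\in(0,t_1-t_0],$$
valid for all $\alpha\in[\alpha_0-\delta,\alpha_0+\delta]$. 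Both summands lie in $L^1(0,t_1-t_0)$ since $\alpha_0-\delta>0$. Hence $|g_\alpha(s)-g_{\alpha_0}(s)|$ is dominated by an integrable function independent of $\alpha$, and the Dominated Convergence Theorem gives $\|g_\alpha-g_{\alpha_0}\|_{L^1(0,t_1-t_0)}\to 0$, concluding the proof.

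\textbf{Expected obstacle.} The only subtlety is Step 1, because the kernels $g_\alpha$ and $g_{\alpha_0}$ have (possibly strong) singularities at $0$ when $\alpha,\alpha_0\in(0,1)$, so one must verify that the proof of Theorem \ref{continuity} truly goes through with $|g_\alpha-g_{\alpha_0}|$ in place of a monomial kernel; however, it does, since that argument only used integrability of the kernel on $(0,t_1-t_0)$ together with Minkowski's integral inequality. Step 2 is then a routine dominated-convergence computation.
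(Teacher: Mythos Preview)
Your proposal is correct and follows essentially the same route as the paper: bound the operator norm by the $L^1$ norm of the kernel difference via the Minkowski-for-integrals argument of Theorem~\ref{continuity}, then show that $L^1$ norm tends to zero by dominated convergence. The only cosmetic difference is that the paper first splits $g_\alpha-g_{\alpha_0}$ as $\tfrac{1}{\Gamma(\alpha)}\bigl(w^{\alpha-1}-w^{\alpha_0-1}\bigr)+\bigl(\tfrac{1}{\Gamma(\alpha)}-\tfrac{1}{\Gamma(\alpha_0)}\bigr)w^{\alpha_0-1}$ before estimating, whereas you apply dominated convergence directly to $|g_\alpha-g_{\alpha_0}|$; your version is slightly cleaner and also makes the $p=\infty$ case explicit.
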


\begin{proof} Let $\alpha_0>0$ and observe that for $\alpha>0$ we have
\begin{multline*}\|J_{t_0,t}^\alpha f(t)-J_{t_0,t}^{\alpha_0} f(t)\|_X\leq\dfrac{1}{\Gamma(\alpha)}\int_{t_0}^{t}\left|(t-s)^{\alpha-1}-(t-s)^{\alpha_0-1}\right|\|f(s)\|_X\,ds
\\+\left|\dfrac{\Gamma(\alpha_0)-\Gamma(\alpha)}{\Gamma(\alpha)\Gamma(\alpha_0)}\right|\int_{t_0}^t(t-s)^{\alpha_0-1}\|f(s)\|_X\,ds,\end{multline*}
for almost every $t\in[t_0,t_1]$. By changing the variable $s=t-w$ in the first term on the right side of the above inequality, we obtain
\begin{multline*}\|J_{t_0,t}^\alpha f(t)-J_{t_0,t}^{\alpha_0} f(t)\|_X\leq\dfrac{1}{\Gamma(\alpha)}\int_{0}^{t-t_0}\left|w^{\alpha-1}-w^{\alpha_0-1}\right|\|f(t-w)\|_X\,dw
\\+\left|\dfrac{\Gamma(\alpha_0)-\Gamma(\alpha)}{\Gamma(\alpha)\Gamma(\alpha_0)}\right|\int_{t_0}^t(t-s)^{\alpha_0-1}\|f(s)\|_X\,ds,\end{multline*}
for almost every $t\in[t_0,t_1]$. But then, Minkowski's inequality ensures that
\begin{multline*}\|J_{t_0,t}^\alpha f(t)-J_{t_0,t}^{\alpha_0} f(t)\|_{L^p(t_0,t_1;X)}\\\leq\dfrac{1}{\Gamma(\alpha)}\left\{\int_{t_0}^{t_1}\left[\int_{0}^{t-t_0}\left|w^{\alpha-1}-w^{\alpha_0-1}\right|\|f(t-w)\|_X\,dw\right]^p\,dt\right\}^{1/p}
\\+\left|\dfrac{\Gamma(\alpha_0)-\Gamma(\alpha)}{\Gamma(\alpha)\Gamma(\alpha_0)}\right|\left\{\int_{t_0}^{t_1}\left[\int_{t_0}^t(t-s)^{\alpha_0-1}\|f(s)\|_X\,ds\right]^p\,dt\right\}^{1/p}.\end{multline*}

Again, changing the variable $t=r+t_0$ in the first term on the right side of the above inequality, we get
\begin{multline*}\|J_{t_0,t}^\alpha f(t)-J_{t_0,t}^{\alpha_0} f(t)\|_{L^p(t_0,t_1;X)}\\\leq\dfrac{1}{\Gamma(\alpha)}\left\{\int_{0}^{t_1-t_0}\left[\int_{0}^{r}\left|w^{\alpha-1}-w^{\alpha_0-1}\right|\|f(r+t_0-w)\|_X\,dw\right]^p\,dr\right\}^{1/p}
\\+\left|\dfrac{\Gamma(\alpha_0)-\Gamma(\alpha)}{\Gamma(\alpha)\Gamma(\alpha_0)}\right|\left\{\int_{t_0}^{t_1}\left[\int_{t_0}^t(t-s)^{\alpha_0-1}\|f(s)\|_X\,ds\right]^p\,dt\right\}^{1/p}.\end{multline*}

If we apply Corollary \ref{minkowskiminkowski} and Theorem \ref{continuity} respectively in the first and second terms on the right side of the above inequality, we obtain
\begin{multline*}\|J_{t_0,t}^\alpha f(t)-J_{t_0,t}^{\alpha_0} f(t)\|_{L^p(t_0,t_1;X)}\\\leq\dfrac{1}{\Gamma(\alpha)}\int_{0}^{t_1-t_0}\left|w^{\alpha-1}-w^{\alpha_0-1}\right|\left[\int_{w}^{t_1-t_0}\|f(r+t_0-w)\|^p_X\,dr\right]^{1/p}\,dw
\\+\left[\dfrac{(t_1-t_0)^{\alpha_0}\big|\Gamma(\alpha_0)-\Gamma(\alpha)\big|}{\Gamma(\alpha)\Gamma(\alpha_0+1)}\right]\|f\|_{L^p(t_0,t_1;X)}.\end{multline*}

Observe that by changing the variable $r=h-t_0+w$ in the inner integral of the first term in the right side of the above inequality, we obtain
\begin{multline*}\|J_{t_0,t}^\alpha f(t)-J_{t_0,t}^{\alpha_0} f(t)\|_{L^p(t_0,t_1;X)} \leq\int_{0}^{t_1-t_0}\dfrac{\left|w^{\alpha-1}-w^{\alpha_0-1}\right|}{\Gamma(\alpha)}\left[\int_{t_0}^{t_1-w}\|f(h)\|^p_X\,dh\right]^{1/p}\,dw
\\+\left[\dfrac{(t_1-t_0)^{\alpha_0}\big|\Gamma(\alpha_0)-\Gamma(\alpha)\big|}{\Gamma(\alpha)\Gamma(\alpha_0+1)}\right]\|f\|_{L^p(t_0,t_1;X)},
\end{multline*}
and therefore
\begin{multline*}\|J_{t_0,t}^\alpha f(t)-J_{t_0,t}^{\alpha_0} f(t)\|_{L^p(t_0,t_1;X)} \\\leq \underbrace{\left[\dfrac{1}{\Gamma(\alpha)}\int_{0}^{t_1-t_0}\left|w^{\alpha-1}-w^{\alpha_0-1}\right|\,dw
+\dfrac{(t_1-t_0)^{\alpha_0}\big|\Gamma(\alpha_0)-\Gamma(\alpha)\big|}{\Gamma(\alpha)\Gamma(\alpha_0+1)}\right]}_{=\eta_\alpha}\|f\|_{L^p(t_0,t_1;X)}.
\end{multline*}

Finally, since $\eta_\alpha\rightarrow 0$, when $\alpha\rightarrow\alpha_0$, because of Theorem \ref{gedominatedconv}  and the continuity of Gamma function in the positive real line, we deduce that
\begin{multline*}\lim_{\alpha\rightarrow\alpha_0}{\big\|J_{t_0,t}^\alpha -J_{t_0,t}^{\alpha_0}\big\|_{\mathcal{L}(L^p(t_0,t_1;{X}),L^p(t_0,t_1;{X}))}}\\=\lim_{\alpha\rightarrow\alpha_0}\left[{\sup_{f\in L^p(t_0,t_1;X)\setminus\{0\}}\dfrac{\big\|J_{t_0,t}^\alpha f-J_{t_0,t}^{\alpha_0}f\big\|_{L^p(t_0,t_1;{X})}}{\|f\big\|_{L^p(t_0,t_1;{X})}}}\right]\leq\lim_{\alpha\rightarrow\alpha_0}{\eta_\alpha}=0.\end{multline*}
\end{proof}

\subsection{Semigroups and RL fractional integral}

In this last subsection we discuss the uniform continuity of RL fractional integral when $\alpha\rightarrow0^+$, in order to complement Theorem \ref{uniformconti01}. However, to do this discussion we need to recall the notions about semigroups of linear operators. Two classical sources on this subject are \cite{HiPh1,Paz1}.

\begin{definition} A family $\{T(t):t\geq0\}\subset\mathcal{L}(X)$ is called a semigroup of bounded linear operators (or just semigroup) in $X$ if:
\begin{itemize}
\item[(i)] $T(0)=Id$, where $Id:X\rightarrow X$ denotes de identity operator in $X$;
\item[(ii)] $T(t+s)=T(t)T(s)$, for every $t,s\geq0$.
\end{itemize}
\end{definition}

\begin{definition} If $\{T(t):t\geq0\}\subset\mathcal{L}(X)$ is a semigroup in $X$, we say that:
\begin{itemize}
\item[(i)] The semigroup is strongly continuous (or just $C_0-$semigroup) if
$$\lim_{t\rightarrow0^+}\|T(t)x-x\|_X=0,$$
for every $x\in X$;
\item[(ii)] The semigroup is uniformly continuous if
$$\lim_{t\rightarrow0^+}\|T(t)-Id\|_{\mathcal{L}(X)}=0.$$
\item[(iii)] The infinitesimal generator of the semigroup is a linear operator $A:D(A)\subset X\rightarrow X$, such that
$$D(A):=\left\{x\in X:\lim_{t\rightarrow0^+}\dfrac{\|T(t)x-x\|_X}{t}\textrm{ exists}\right\},$$
and
$$Ax=\lim_{t\rightarrow0^+}\dfrac{\|T(t)x-x\|_X}{t},$$
for any $x\in D(A)$.
\end{itemize}
\end{definition}

In order to prove the main theorem of this subsection, we shall recall the following classical result (see \cite[Corollary 1.4 and Theorem 2.4]{Paz1} for details on the proof):

\begin{theorem}\label{mainsemigrupo} Let $\{T(t):t\geq0\}$ be a $C_0-$semigroup in $X$ and $A:D(A)\subset X\rightarrow X$ its infinitesimal generator.
\begin{itemize}
\item[(i)] For any $x\in X$ and $t\geq0$ we have that
$$\lim_{h\rightarrow0^+}\left[\dfrac{1}{h}\int_{t}^{t+h}T(s)x\,ds\right]=T(t)x,$$
in the topology of $X$.\vspace*{0.2cm}
\item[(ii)] For $x\in X$, it holds that
$$\int_0^tT(s)x\,ds\in D(A)\qquad\textrm{and}\qquad A\left(\int_0^tT(s)x\,ds\right)=T(t)x-x.\vspace*{0.2cm}$$
\item[(iii)] For $x\in D(A)$ and $t\geq0$, we have that $T(t)x\in D(A)$ and
$$\dfrac{d}{dt}T(t)x=AT(t)x=T(t)Ax.\vspace*{0.2cm}$$
\item[(iv)] $\{T(t):t\geq0\}$ is uniformly continuous if, and only if, $A\in\mathcal{L}(X)$.
\end{itemize}
\end{theorem}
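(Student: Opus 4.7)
The plan is to establish the four items in order, relying on the strong continuity of $\{T(t):t\geq 0\}$, the semigroup identity $T(t+s)=T(t)T(s)$, and standard Bochner-integral manipulations. Item (i) reduces to the vector-valued Lebesgue differentiation theorem: one first uses strong continuity together with the semigroup law and the growth bound $\|T(s)\|_{\mathcal{L}(X)}\leq Me^{\omega s}$ (obtained via uniform boundedness on $[0,1]$ and iteration) to show that $s\mapsto T(s)x$ is continuous on $[0,\infty)$, and then notes that the average over $[t,t+h]$ of a continuous integrand tends to its value at $t$ as $h\to 0^+$.

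For item (ii), I would compute the difference quotient by pulling $T(h)$ inside the integral (legal since $T(h)\in\mathcal{L}(X)$) and performing the change of variables $u=s+h$:
\begin{equation*}
\frac{T(h)-Id}{h}\int_0^t T(s)x\,ds=\frac{1}{h}\int_t^{t+h}T(s)x\,ds-\frac{1}{h}\int_0^h T(s)x\,ds.
\end{equation*}
Sending $h\to 0^+$ and invoking item (i) on both averages yields the limit $T(t)x-x$ in $X$, so $\int_0^t T(s)x\,ds\in D(A)$ and the stated identity holds. Item (iii) uses the same trick: for $x\in D(A)$,
\begin{equation*}
\frac{T(h)-Id}{h}T(t)x=T(t)\frac{T(h)-Id}{h}x\longrightarrow T(t)Ax,
\end{equation*}
by continuity of $T(t)$, giving $T(t)x\in D(A)$ with $AT(t)x=T(t)Ax$ and the right-hand derivative. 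The left-hand derivative at $t>0$ is handled analogously with $T(t-h)$, using strong continuity to pass $T(t-h)Ax\to T(t)Ax$.

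For item (iv), the forward direction assumes $A\in\mathcal{L}(X)$: define $S(t)=\sum_{n=0}^\infty (tA)^n/n!$, verify via the series norm bound that $S$ is a uniformly continuous semigroup whose generator is $A$, and appeal to uniqueness of the generator to conclude $S=T$. The converse is the main obstacle. Assuming $\|T(t)-Id\|_{\mathcal{L}(X)}\to 0$ as $t\to 0^+$, select $h>0$ small enough that $\|Id-h^{-1}\int_0^h T(s)\,ds\|_{\mathcal{L}(X)}<1$, so $\int_0^h T(s)\,ds$ is invertible in $\mathcal{L}(X)$ by the Neumann series. Reading item (ii) uniformly in $x\in X$, one obtains $A\int_0^h T(s)x\,ds=T(h)x-x$ for every $x\in X$; consequently $A$ coincides on all of $X$ with the bounded operator $(T(h)-Id)\bigl(\int_0^h T(s)\,ds\bigr)^{-1}$, proving $A\in\mathcal{L}(X)$ and $D(A)=X$. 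The technical subtlety lies precisely here, in showing that the averaged semigroup is norm-close to the identity and then leveraging its invertibility to represent $A$ as a composition of bounded operators.
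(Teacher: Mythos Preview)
Your sketch is correct and follows the standard argument found in Pazy \cite{Paz1}, which is exactly what the paper does: it does not give its own proof of this theorem but simply cites \cite[Corollary~1.4 and Theorem~2.4]{Paz1}. So there is nothing to compare beyond observing that you have reproduced the classical proof that the paper refers the reader to; the only cosmetic point is that you label the implication ``$A\in\mathcal{L}(X)\Rightarrow$ uniformly continuous'' as the forward direction, whereas the statement is phrased with the implications in the opposite order.
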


It is widely known, as pointed by item (i) of Remark \ref{remark5}, that RL fractional integral can be reinterpreted as the convolution $g_\alpha*f_{t_0}(t)$, for almost every $t\in[t_0,t_1]$. Thus, the semigroup property of the family of functions $\{g_\alpha(t):\alpha>0\}$ and convolution associativity allow us to conclude that $\{J_{t_0,t}^{\alpha}:\alpha>0\}$ inherits this property. In order to keep this work self contained, bellow we state this result and make a reference to its detailed proof.

\begin{theorem}\label{covproper} Let $1\leq p\leq\infty$. Then the family $\{J_{t_0,t}^\alpha:\alpha\geq0\}$ defines a $C_0-$semigroup in $L^p(t_0,t_1;X)$.
\end{theorem}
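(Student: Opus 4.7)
The plan is to verify the three defining properties of a $C_0$-semigroup of bounded linear operators on $L^p(t_0,t_1;X)$. The boundedness of each $J_{t_0,t}^\alpha$ is already contained in Theorem \ref{minkowskiseq}; the identity $J_{t_0,t}^0=Id$ is the convention introduced in Remark \ref{remark5}(ii); and the strong continuity
\begin{equation*}
\lim_{\alpha\to 0^+}\|J_{t_0,t}^\alpha f-f\|_{L^p(t_0,t_1;X)}=0,\qquad f\in L^p(t_0,t_1;X),
\end{equation*}
is exactly Theorem \ref{auxtextfunc2}. Hence the only genuinely new step is to establish the semigroup law
\begin{equation*}
J_{t_0,t}^\alpha\circ J_{t_0,t}^\beta=J_{t_0,t}^{\alpha+\beta},\qquad \alpha,\beta>0.
\end{equation*}

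The natural route is the convolution representation of Remark \ref{remark5}(i). Extending $f\in L^p(t_0,t_1;X)$ by zero to $f_{t_0}:\mathbb{R}\to X$ one has $J_{t_0,t}^\alpha f(t)=(g_\alpha * f_{t_0})(t)$ for almost every $t\in[t_0,t_1]$. Since $f_{t_0}$ is compactly supported and $g_\alpha,g_\beta$ are locally integrable on $\mathbb{R}$, both $g_\beta*f_{t_0}$ and $g_\alpha*(g_\beta*f_{t_0})$ exist a.e.\ in the Bochner sense, and associativity of the Bochner convolution (a Fubini-type argument justified by Theorem \ref{FubiniFubini}) gives
\begin{equation*}
J_{t_0,t}^\alpha\bigl(J_{t_0,t}^\beta f\bigr)(t)=\bigl(g_\alpha * (g_\beta*f_{t_0})\bigr)(t)=\bigl((g_\alpha*g_\beta)*f_{t_0}\bigr)(t).
\end{equation*}
The classical Beta-Gamma identity
\begin{equation*}
(g_\alpha*g_\beta)(t)=\int_0^t\frac{s^{\alpha-1}}{\Gamma(\alpha)}\frac{(t-s)^{\beta-1}}{\Gamma(\beta)}\,ds=\frac{t^{\alpha+\beta-1}}{\Gamma(\alpha+\beta)}=g_{\alpha+\beta}(t),\quad t>0,
\end{equation*}
which transfers verbatim from the scalar case (the kernel $g_\alpha(t-s)g_\beta(s-w)$ being real valued), closes the composition to $J_{t_0,t}^{\alpha+\beta}f$.

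The only real obstacle is bookkeeping: one must verify that the hypotheses of Bochner-Fubini are satisfied when $J_{t_0,t}^\alpha(J_{t_0,t}^\beta f)(t)$ is expanded as the iterated integral
\begin{equation*}
\int_{t_0}^{t}\int_{t_0}^{s}\frac{(t-s)^{\alpha-1}(s-w)^{\beta-1}}{\Gamma(\alpha)\Gamma(\beta)}\,f(w)\,dw\,ds,
\end{equation*}
which follows from the estimate in Theorem \ref{minkowskiseq} applied to $\|f\|_X\in L^p(t_0,t_1;\mathbb{R})$. A complete treatment of Bochner convolutions, including associativity and the computation of $g_\alpha*g_\beta$, is given in Mikusi\'nski \cite[Chapter XIV]{Mik1}; consistently with the announcement preceding the statement, we cite this monograph rather than reproduce the argument here.
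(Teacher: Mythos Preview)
Your proposal is correct and follows the same approach that the paper itself indicates: the paragraph preceding Theorem \ref{covproper} already explains that the semigroup law comes from the convolution representation $J_{t_0,t}^\alpha f=g_\alpha*f_{t_0}$, the identity $g_\alpha*g_\beta=g_{\alpha+\beta}$, and associativity of the Bochner convolution, while the paper's proof simply defers the details to \cite[Propositions 2.24 and 2.35]{Car1}. You supply a more explicit assembly of the pieces (boundedness from Theorem \ref{minkowskiseq}, the convention $J_{t_0,t}^0=Id$, strong continuity from Theorem \ref{auxtextfunc2}), but this is entirely in line with the paper's sketch and its citations.
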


\begin{proof} See \cite[Propositions 2.24 and 2.35 for details on the proof]{Car1}.
\end{proof}

Now we address the infinitesimal generator of the $C_0-$semigroup $\{J_{t_0,t}^\alpha:\alpha\geq0\}$. It worths to point out that our next result was inspired in \cite[Theorem 23.16.1]{HiPh1} and in the subjects we have discussed in the proof of Lemma \ref{auxtextfunc}.

\begin{theorem}\label{gerinf} Let $1\leq p\leq \infty$ and assume that $A:D(A)\subset L^p(t_0,t_1;X)\rightarrow L^p(t_0,t_1;X)$ is the infinitesimal generator of the $C_0-$semigroup $\{J_{t_0,t}^\alpha:\alpha\geq0\}$ in $L^p(t_0,t_1;X)$. Then $f\in D(A)$ if, and only if,
$$\int_{t_0}^t\ln(t-s)f(s)\,ds$$
is absolutely continuous from $[t_0,t_1]$ into $X$ and its derivative belongs to $L^p(t_0,t_1;X)$. Moreover, we have
\begin{equation}\label{charac}Af(t)=-\psi(1)f(t)+\dfrac{d}{dt}\left[\int_{t_0}^t\ln(t-s)f(s)\,ds\right],\end{equation}
for almost every $t\in[t_0,t_1]$, where $\psi(t)$ denotes the Digamma function (note that $-\psi(1)$ is the Euler–Mascheroni constant).
\end{theorem}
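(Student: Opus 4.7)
The plan is to introduce the candidate operator
$$Bf(t) := -\psi(1)f(t) + \frac{d}{dt}\left[\int_{t_0}^t \ln(t-s) f(s)\,ds\right],$$
with $D(B)$ equal to the class of $f\in L^p(t_0,t_1;X)$ described in the statement, and to prove $A=B$ by verifying two inclusions: $A\subseteq B$ (forward direction) via a convergence-after-integration argument, and $D(B)\subseteq D(A)$ (reverse direction) via injectivity of $\lambda-B$ together with a resolvent argument.

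For the forward inclusion I will fix $f\in D(A)$ and apply the bounded operator $J_{t_0,t}^1$ (Theorem \ref{minkowskiseq}) to both sides of the defining limit $\alpha^{-1}(J_{t_0,t}^\alpha f - f)\to Af$ in $L^p$. By the semigroup identity $J_{t_0,t}^1 J_{t_0,t}^\alpha = J_{t_0,t}^{\alpha+1}$ (Theorem \ref{covproper}) the left-hand side becomes
$$\int_{t_0}^t \frac{1}{\alpha}\left[\frac{(t-s)^\alpha}{\Gamma(\alpha+1)}-1\right]f(s)\,ds,$$
whose integrand converges pointwise to $[\ln(t-s)-\psi(1)]f(s)$. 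The Mean Value Theorem estimate from the proof of Lemma \ref{auxtextfunc} will supply a bound of the form $C[1+|\ln(t-s)|]\|f(s)\|_X$, and Young's convolution inequality will turn this into a single $L^p$-majorant valid for all small $\alpha$. Bochner-valued dominated convergence then yields $L^p$ convergence to $g_f-\psi(1)J_{t_0,t}^1 f$, where $g_f(t)=\int_{t_0}^t\ln(t-s)f(s)\,ds$. Equating the two limits will give $g_f(t)=\int_{t_0}^t[Af(s)+\psi(1)f(s)]\,ds$, which exhibits $g_f$ as absolutely continuous on $[t_0,t_1]$ with derivative $Af+\psi(1)f\in L^p$, thus $f\in D(B)$ and $Bf=Af$.

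For the reverse inclusion I will use a resolvent argument. Since $\|J_{t_0,t}^\alpha\|_{\mathcal{L}(L^p)}\leq (t_1-t_0)^\alpha/\Gamma(\alpha+1)$ is bounded in $\alpha$, the growth bound of the semigroup is non-positive, so every $\lambda>0$ belongs to $\rho(A)$. Given $f\in D(B)$, surjectivity of $\lambda-A$ produces $u\in D(A)$ with $(\lambda-A)u=(\lambda-B)f$; the already-proved inclusion $A\subseteq B$ then rewrites this as $(\lambda-B)(u-f)=0$, so if $\lambda-B$ is injective then $f=u\in D(A)$. To establish this injectivity I will observe that $(\lambda-B)f=0$ integrates, using $g_f(t_0)=0$, to
$$\int_{t_0}^t [\ln(t-s)-\lambda-\psi(1)]\,f(s)\,ds=0,\qquad t\in[t_0,t_1].$$
Pairing with an arbitrary $x^*\in X^*$ will convert this into a scalar Volterra convolution equation whose kernel $\ln(u)-\lambda-\psi(1)$ is nonzero in every right-neighborhood of $0$; Titchmarsh's convolution theorem will then force $x^*\circ f=0$ almost everywhere, and Hahn-Banach (together with Pettis measurability of $f$) yields $f=0$.

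The main obstacle will be the $L^p$-dominated convergence step in the forward direction: although the pointwise limit and the MVT bound are routine, producing a single $L^p$-majorant valid uniformly for small $\alpha$ requires combining the logarithmic estimate with Young's convolution inequality in the $X$-valued setting, and verifying that the Bochner-valued dominated convergence theorem applies. A secondary subtlety is the reduction of Titchmarsh's convolution theorem to the scalar case, which I plan to handle purely by duality and Pettis measurability rather than by proving a Banach-valued version directly.
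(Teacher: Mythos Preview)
Your forward direction ($A \subseteq B$) coincides with the paper's: both integrate once (you apply $J_{t_0,t}^1$ to the difference quotient, the paper invokes the identity $\frac{d}{d\alpha}J_{t_0,t}^{\alpha+1}f = J_{t_0,t}^{\alpha+1}Af$ from item (iii) of Theorem \ref{mainsemigrupo} and lets $\alpha\to 0$), arrive at the integrand $\alpha^{-1}\big[(t-s)^\alpha/\Gamma(\alpha+1) - 1\big]f(s)$, control it by a Mean-Value-Theorem bound, and pass to the limit by dominated convergence. Your logarithmic majorant $C(1+|\ln(t-s)|)$ combined with Young's inequality is marginally cleaner than the paper's majorant $M(t-s)^{\xi-1/2}$, but the mechanism is identical.

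Your reverse direction ($D(B) \subseteq D(A)$) is genuinely different. The paper proceeds constructively: starting from $J_{t_0,t}^{\alpha_2}f - J_{t_0,t}^{\alpha_1}f = A\!\int_{\alpha_1}^{\alpha_2}J_{t_0,t}^\gamma f\,d\gamma$ (item (ii) of Theorem \ref{mainsemigrupo}) and applying the already-established formula \eqref{charac} to this element of $D(A)$, it performs a sequence of Fubini swaps and changes of variable to rewrite the right-hand side as $\int_{\alpha_1}^{\alpha_2}J_{t_0,t}^\gamma\big[-\psi(1)f + \Phi'\big]\,d\gamma$, then sends $\alpha_1\to 0^+$ and divides by $\alpha_2$ to exhibit the generator limit directly via item (i) of Theorem \ref{mainsemigrupo}. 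Your route via $\lambda\in\rho(A)$ and injectivity of $\lambda - B$ through Titchmarsh's convolution theorem is more structural: it avoids the lengthy Fubini manipulations, but imports the nontrivial Titchmarsh theorem and requires the Pettis/separability reduction to pass from scalar vanishing to $f=0$. Both arguments are valid; the paper's is self-contained within the tools already assembled in Section \ref{sec5}, while yours is shorter and makes transparent that the characterization is at bottom a uniqueness statement for a Volterra equation.
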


\begin{proof} For any $f\in D(A)$ item $(iii)$ of Theorem \ref{mainsemigrupo} ensures that
\begin{equation}\label{derlpha}\dfrac{d}{d\alpha}\Big[J_{t_0,t}^\alpha f\Big]=J_{t_0,t}^\alpha Af,\end{equation}
for every $\alpha\geq0$.

Thus, identity \eqref{derlpha} ensures that
$$\dfrac{d}{d\alpha}\left[\dfrac{1}{\Gamma(\alpha+1)}\int_{t_0}^t(t-s)^{\alpha}f(s)\,ds\right]=\dfrac{d}{d\alpha}\Big[J_{t_0,t}^{\alpha+1}f(t)\Big]=J_{t_0,t}^{\alpha+1}Af(t),$$
for almost every $t\in[t_0,t_1]$ and every $\alpha\geq0$, which, by Theorem \ref{leibnizint} and equation \eqref{ajudaequili}, is equivalent to
\begin{equation}\label{ultimanovidade01211}\int_{t_0}^t\underbrace{\left\{\dfrac{(t-s)^{\alpha}\big[\ln(t-s)-\psi(\alpha+1)\big]}{\Gamma(\alpha+1)}\right\}}_{=:\rho_\alpha(t,s)}f(s)\,ds=J_{t_0,t}^{\alpha+1}Af(t),\end{equation}
for almost every $t\in[t_0,t_1]$ and every $\alpha\geq0$. Above the symbol $\psi(z)$ denotes the Digamma function.

At this point, observe that a consequence of Theorems \ref{auxtextfunc2} and \ref{covproper} is that there exists a sequence $\{\tau_n\}_{n=1}^\infty$ of positive real numbers such that $\tau_n\rightarrow0$, when $n\rightarrow\infty$,
\begin{equation}\label{ultimanovidade01}\lim_{n\rightarrow\infty}J^{\tau_n+1}_{t_0,t}Af(t)=J^{1}_{t_0,t}Af(t),\end{equation}
for almost every $t\in[t_0,t_1]$ and $0<\tau_n<1/2$, for every $n\in\mathbb{N}$.

Then, for almost every $t\in[t_0,t_1]$ it holds that:
\begin{itemize}
\item[(i)] If we consider $[t_0,t]\ni s\mapsto\big[\ln(t-s)-\psi(1)\big]f(s)$, the continuity of the Gamma and Digamma functions in the positive real line ensure that the Bochner measurable functions $[t_0,t]\ni s\mapsto\rho_{\tau_n}(t,s)f(s)$ satisfies
$$\lim_{n\rightarrow\infty}\left\|\rho_{\tau_n}(t,s)f(s)-\big[\ln(t-s)-\psi(1)\big]f(s)\right\|_X,$$
for almost every $s\in[t_0,t]$; \vspace*{0.2cm}

\item[(ii)] Functions $[t_0,t]\ni s\mapsto(t-s)^{\tau_n-(1/2)}\|f(s)\|_X$ and $[t_0,t]\ni s\mapsto(t-s)^{-(1/2)}\|f(s)\|_X$ are Lebesgue integrable and also satisfies
$$\lim_{n\rightarrow\infty}\left|(t-s)^{\tau_n-(1/2)}\|f(s)\|_X-(t-s)^{-(1/2)}\|f(s)\|_X\right|=0,$$
for almost every $s\in[t_0,t]$; \vspace*{0.2cm}
\item[(iii)] $\|\rho_{\tau_n}(s)f(s)\|_X\leq M(t-s)^{\tau_n-(1/2)}\|f(s)\|_X,$ for almost every $s\in[t_0,t]$, where
$$M=\max_{r\in[0,1]}{\left[\dfrac{1+\big|\psi(r+1)\big|}{\Gamma(r+1)}\right]}\left[\max_{r\in[0,t_1-t_0]}{|r^{1/2}\ln(r)|}+(t_1-t_0)^{1/2}\right],$$
(confront with \eqref{ultimadesinova-01}, \eqref{ultimadesinova} and \eqref{ultimadesinova+01});\vspace*{0.2cm}
\item[(iv)] Finally, we also have
\begin{multline*}\hspace*{0.4cm}\lim_{n\rightarrow\infty}\left[M\int_{t_0}^t(t-s)^{\tau_n-(1/2)}\|f(s)\|_X\,ds\right]
\\=\lim_{n\rightarrow\infty}\Big[M\Gamma(\tau_n+(1/2))J_{t_0,t}^{\tau_n+(1/2)}\|f(t)\|_X\Big]
=M\Gamma(1/2)J_{t_0,t}^{1/2}\|f(t)\|_X\\=\int_{t_0}^tM(t-s)^{-(1/2)}\|f(s)\|_X\,ds.\end{multline*}
\end{itemize}

Items $(i)-(iv)$ above, together with Theorem \ref{gedominatedconv}, ensure that $\big[\ln(t-s)-\psi(1)\big]f(s)$ is Bochner integrable and
\begin{equation}\label{ultimanovidade02}\lim_{n\rightarrow\infty}\int_{t_0}^t\left[\dfrac{(t-s)^{\tau_n}\big[\ln(t-s)-\psi(\tau_n+1)\big]}{\Gamma(\tau_n+1)}\right]f(s)\,ds
=\int_{t_0}^t\big[\ln(t-s)-\psi(1)\big]f(s)\,ds,\end{equation}
in the topology of $X$, for almost every $t\in[t_0,t_1]$.

Therefore, from \eqref{ultimanovidade01211}, \eqref{ultimanovidade01} and \eqref{ultimanovidade02} we deduce that
$$J^{1}_{t_0,t}Af(t)=\int_{t_0}^t\big[\ln(t-s)-\psi(1)\big]f(s)\,ds,$$
for almost every $t\in[t_0,t_1]$. Now observe that the above equality allows us to obtain
$$\int_{t_0}^t\ln(t-s)f(s)\,ds=\int_{t_0}^t\Big[Af(s)+\psi(1)f(s)\Big]\,ds,$$
for almost every $t\in[t_0,t_1]$. Since the right side of this identity is absolutely continuous with its derivative in $L^p(t_0,t_1;X)$, we conclude that the left side also has this property, as we wanted. Moreover, we obtain \eqref{charac}.

Conversely, let $f\in L^p(t_0,t_1;X)$ such that
$$\Phi(t):=\int_{t_0}^t\ln(t-s)f(s)\,ds,$$
is absolutely continuous in $[t_0,t_1]$ and its derivative belongs to $L^p(t_0,t_1;X)$. Observe that for any $0<\alpha_1<\alpha_2<\infty$, item $(ii)$ of Theorem \ref{mainsemigrupo} ensures that
$$J_{t_0,t}^{\alpha_2}f(t)-J_{t_0,t}^{\alpha_1}f(t)=A\underbrace{\left(\int_{\alpha_1}^{\alpha_2}J_{t_0,t}^\gamma f(t)\,d\gamma\right)}_{\in\, D(A)},$$
for almost every $t\in[t_0,t_1]$. Therefore, identity \eqref{charac} ensures that
\begin{multline}\label{lastequalityhope0666}J_{t_0,t}^{\alpha_2}f(t)-J_{t_0,t}^{\alpha_1}f(t)\\=-\psi(1)\left(\int_{\alpha_1}^{\alpha_2}J_{t_0,t}^\gamma f(t)\,d\gamma\right)+\dfrac{d}{dt}\left[\int_{t_0}^t\ln(t-s)\left(\int_{\alpha_1}^{\alpha_2}J_{t_0,s}^\gamma f(s)\,d\gamma\right)\,ds\right],\end{multline}
for almost every $t\in[t_0,t_1]$.

Note that Theorem \ref{FubiniFubini} ensures
\begin{equation*}\int_{t_0}^t\ln(t-s)\left(\int_{\alpha_1}^{\alpha_2}J_{t_0,s}^\gamma f(s)\,d\gamma\right)\,ds=\int_{\alpha_1}^{\alpha_2}\left(\int_{t_0}^{t}\ln(t-s)J_{t_0,s}^\gamma f(s)\,ds\right)\,d\gamma,\end{equation*}
for almost every $t\in[t_0,t_1]$, which we reinterpret as
\begin{multline}\label{lastequalityhope01}\int_{t_0}^t\ln(t-s)\left(\int_{\alpha_1}^{\alpha_2}J_{t_0,s}^\gamma f(s)\,d\gamma\right)\,ds\\=\int_{\alpha_1}^{\alpha_2}\left\{\int_{t_0}^{t}\left[\int_{t_0}^{s}\ln(t-s)\left(\dfrac{(s-w)^{\gamma-1}}{\Gamma(\gamma)}\right) f(w)\,dw\right]\,ds\right\}\,d\gamma,\end{multline}
for almost every $t\in[t_0,t_1]$.

However, since by changing the variables $w=s+t_0-h$ we deduce that
\begin{multline*}\int_{t_0}^{t}\left[\int_{t_0}^{s}\ln(t-s)\left(\dfrac{(s-w)^{\gamma-1}}{\Gamma(\gamma)}\right) f(w)\,dw\right]\,ds\\=\int_{t_0}^{t}\left[\int_{t_0}^{s}\ln(t-s)\left(\dfrac{(h-t_0)^{\gamma-1}}{\Gamma(\gamma)}\right) f(s+t_0-h)\,dh\right]\,ds,\end{multline*}
for almost every $t\in[t_0,t_1]$, we can apply Corollary \ref{FubiniFubini2} to obtain
\begin{multline*}\int_{t_0}^{t}\left[\int_{t_0}^{s}\ln(t-s)\left(\dfrac{(s-w)^{\gamma-1}}{\Gamma(\gamma)}\right) f(w)\,dw\right]\,ds\\=\dfrac{1}{\Gamma(\gamma)}\int_{t_0}^{t}(h-t_0)^{\gamma-1}\left[\int_{h}^{t}\ln(t-s)f(s+t_0-h)\,ds\right]\,dh,\end{multline*}
for almost every $t\in[t_0,t_1]$. Thus, by changing the variable $s=r+h-t_0$ we get
\begin{multline*}\int_{t_0}^{t}\left[\int_{t_0}^{s}\ln(t-s)\left(\dfrac{(s-w)^{\gamma-1}}{\Gamma(\gamma)}\right) f(w)\,dw\right]\,ds\\=\dfrac{1}{\Gamma(\gamma)}\int_{t_0}^{t}(h-t_0)^{\gamma-1}\left[\int_{t_0}^{t+t_0-h}\ln(t+t_0-r-h)f(r)\,dr\right]\,dh,\end{multline*}
for almost every $t\in[t_0,t_1]$, and finally by changing the variable $h=t+t_0-\tau$, we achieve the identity
\begin{multline}\label{ultma01}\int_{t_0}^{t}\left[\int_{t_0}^{s}\ln(t-s)\left(\dfrac{(s-w)^{\gamma-1}}{\Gamma(\gamma)}\right) f(w)\,dw\right]\,ds\\=\dfrac{1}{\Gamma(\gamma)}\int_{t_0}^{t}(t-\tau)^{\gamma-1}\left[\int_{t_0}^{\tau}\ln(\tau-r)f(r)\,dr\right]\,d\tau=J_{t_0,t}^\gamma\Phi(t),\end{multline}
for almost every $t\in[t_0,t_1]$. Thus, by replacing \eqref{ultma01} in \eqref{lastequalityhope01} we obtain
\begin{equation}\label{lastequalityhope02}\int_{t_0}^t\ln(t-s)\left(\int_{\alpha_1}^{\alpha_2}J_{t_0,s}^\gamma f(s)\,d\gamma\right)\,ds\\=\int_{\alpha_1}^{\alpha_2}J_{t_0,t}^\gamma\Phi(t)\,d\gamma,\end{equation}
for almost every $t\in[t_0,t_1]$.

Now, observe that Theorem \ref{leibnizint} and equation \eqref{lastequalityhope02} give us
\begin{equation*}\dfrac{d}{dt}\left[\int_{t_0}^t\ln(t-s)\left(\int_{\alpha_1}^{\alpha_2}J_{t_0,s}^\gamma f(s)\,d\gamma\right)\,ds\right]\\=\int_{\alpha_1}^{\alpha_2}\left[\dfrac{d}{dt}J_{t_0,t}^\gamma\Phi(t)\right]\,d\gamma,\end{equation*}
for almost every $t\in[t_0,t_1]$. Now the differentiability of $\Phi(t)$, the fact that $\Phi(t_0)=0$ and Theorem \ref{leibnizint}, allow us to rewrite the above identity as
\begin{equation*}\dfrac{d}{dt}\left[\int_{t_0}^t\ln(t-s)\left(\int_{\alpha_1}^{\alpha_2}J_{t_0,s}^\gamma f(s)\,d\gamma\right)\,ds\right]\\=\int_{\alpha_1}^{\alpha_2}J_{t_0,t}^\gamma\Phi^\prime(t)\,d\gamma,\end{equation*}
for almost every $t\in[t_0,t_1]$. Thus we may rewrite \eqref{lastequalityhope0666} as
$$J_{t_0,t}^{\alpha_2}f(t)-J_{t_0,t}^{\alpha_1}f(t)\\=\int_{\alpha_1}^{\alpha_2}J_{t_0,t}^\gamma \Big[-\psi(1)f(t)+\Phi^\prime(t)\Big]\,d\gamma,$$
for almost every $t\in[t_0,t_1]$. Making $\alpha_1\rightarrow0^+$, Theorem \ref{auxtextfunc2} help us to obtain
$$\dfrac{J_{t_0,t}^{\alpha_2}f(t)-f(t)}{\alpha_2}\\=\dfrac{\displaystyle\int_{0}^{\alpha_2}J_{t_0,t}^\gamma \Big[-\psi(1)f(t)+\Phi^\prime(t)\Big]\,d\gamma}{\alpha_2},$$
for almost every $t\in[t_0,t_1]$. Thus item $(i)$ of Theorem \ref{mainsemigrupo} and Theorem \ref{auxtextfunc2} ensure that
$$\lim_{\alpha_2\rightarrow0^+}\dfrac{J_{t_0,t}^{\alpha_2}f(t)-f(t)}{\alpha_2}\\=-\psi(1)f(t)+\Phi^\prime(t),$$
for almost every $t\in[t_0,t_1]$, i.e, that $f\in D(A)$.
\end{proof}

In order to answer the question proposed in the beginning of this subsection, we need the following technical lemma:

\begin{lemma}\label{digammafunction} If $n\in\mathbb{N}$, it holds that
$$\Gamma(n+1)J_{0,t}^n\ln(t)=t^n\Big[\ln(t)+\psi(1)-\psi(n+1)\Big],$$
for almost every $t\in[0,\infty)$. In the equation above $\psi(z)$ denotes the Digamma function.
\end{lemma}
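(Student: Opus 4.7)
The plan is to proceed by induction on $n$. The base case $n=0$ is immediate, since $J_{0,t}^0\ln(t)=\ln(t)$, $\Gamma(1)=1$, and $\psi(1)-\psi(1)=0$, so both sides reduce to $\ln(t)$ for every $t>0$.

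For the inductive step, assume the identity holds at some $n\geq 0$. Writing $J_{0,t}^{n+1}\ln(t)=\int_0^t J_{0,s}^n\ln(s)\,ds$ (Cauchy's formula for repeated integration, or equivalently the semigroup identity of Theorem \ref{covproper}), using $\Gamma(n+2)=(n+1)\Gamma(n+1)$, and applying the induction hypothesis under the integral sign yields
\begin{equation*}
\Gamma(n+2)\,J_{0,t}^{n+1}\ln(t)=(n+1)\int_0^t s^n\bigl[\ln(s)+\psi(1)-\psi(n+1)\bigr]\,ds.
\end{equation*}

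Integration by parts gives $\int_0^t s^n\ln(s)\,ds=\tfrac{t^{n+1}\ln(t)}{n+1}-\tfrac{t^{n+1}}{(n+1)^2}$ (the boundary contribution at $0$ vanishes since $s^{n+1}\ln(s)\to 0$), while $\int_0^t s^n\,ds=\tfrac{t^{n+1}}{n+1}$. Substituting and collecting terms yields
\begin{equation*}
\Gamma(n+2)\,J_{0,t}^{n+1}\ln(t)=t^{n+1}\Bigl[\ln(t)+\psi(1)-\psi(n+1)-\tfrac{1}{n+1}\Bigr].
\end{equation*}
The Digamma recurrence $\psi(z+1)=\psi(z)+1/z$ at $z=n+1$ converts the bracket into $\ln(t)+\psi(1)-\psi(n+2)$, closing the induction.

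I do not anticipate any real obstacle: the whole argument is routine bookkeeping with the defining Gamma and Digamma recurrences together with a single integration by parts. As an alternative one-shot proof, one could substitute $s=tu$ in $J_{0,t}^n\ln(t)=\tfrac{1}{(n-1)!}\int_0^t(t-s)^{n-1}\ln(s)\,ds$ to reduce the problem to a Beta integral with a logarithmic factor, and invoke the classical identity $\int_0^1 u^{a-1}(1-u)^{b-1}\ln(u)\,du=B(a,b)\bigl[\psi(a)-\psi(a+b)\bigr]$ with $a=1$, $b=n$; however, this requires importing more special-function machinery, so the inductive route feels cleaner in the present context.
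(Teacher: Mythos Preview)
Your inductive argument is correct. It differs from the paper's approach, which proceeds directly: the paper writes $\Gamma(n+1)J_{0,t}^n\ln(t)=n\int_0^t(t-s)^{n-1}\ln(s)\,ds$, substitutes $s=tr$ to factor out $t^n$ and split $\ln(tr)=\ln t+\ln r$, and then evaluates $n\int_0^1(1-r)^{n-1}\ln r\,dr=-\sum_{k=1}^n 1/k$ via repeated integration by parts, matching this harmonic sum with $\psi(n+1)-\psi(1)$. This is essentially the ``one-shot'' alternative you sketch at the end, though carried out elementarily rather than by quoting the logarithmic Beta identity. Your induction trades that computation for a single integration by parts plus the Digamma recurrence $\psi(z+1)=\psi(z)+1/z$ at each step; it is arguably cleaner because it never needs the explicit harmonic-sum evaluation, and it also covers the trivial case $n=0$ that the paper's formulation $n\int_0^t(t-s)^{n-1}\ln s\,ds$ tacitly excludes. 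Either route uses the same two ingredients (integration by parts and the Digamma recurrence), just organized differently.
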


\begin{proof} Observe that
$$\Gamma(n+1)J_{0,t}^n\ln(t)=n\int_{0}^t(t-s)^{n-1}\ln(s)\,ds,$$
for almost every $t\in[0,\infty)$. Therefore, by changing the variable $s=tr$ we obtain
\begin{multline*}\Gamma(n+1)J_{0,t}^n\ln(t)=n t^n\int_{0}^1(1-r)^{n-1}\ln(tr)\,dr=n t^n\int_{0}^1(1-r)^{n-1}\ln(r)\,dr\\+n t^n\ln(t)\int_{0}^1(1-r)^{n-1}\,dr,\end{multline*}
for almost every $t\in[0,\infty)$. Hence, we deduce
$$\Gamma(n+1)J_{0,t}^n\ln(t)= t^n\Big[\ln(t)+n\int_{0}^1(1-r)^{n-1}\ln(r)\,dr\Big],$$
for almost every $t\in[0,\infty)$. Finally, integration by parts and the recurrence relation of the Digamma function gives us
$$n\int_{0}^1(1-r)^{n-1}\ln(r)\,dr=-\sum_{k=1}^{n}\dfrac{1}{k}\qquad\textrm{and}\qquad \psi(n+1)-\psi(1)=\sum_{k=1}^{n}\dfrac{1}{k},$$
what allow us to conclude the arguments of this proof.
\end{proof}

\begin{theorem} Assume that $1\leq p\leq \infty$. If $A:D(A)\subset L^p(t_0,t_1;X)\rightarrow L^p(t_0,t_1;X)$ is the infinitesimal generator of the $C_0-$semigroup $\{J_{t_0,t}^\alpha:\alpha\geq0\}\subset \mathcal{L}(L^p(t_0,t_1;X))$, then $A:D(A)\subset L^p(t_0,t_1;X)\rightarrow L^p(t_0,t_1;X)$ is an unbounded operator.
\end{theorem}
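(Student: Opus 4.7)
The plan is to exhibit a sequence $(f_n)_{n\in\mathbb{N}^*} \subset D(A)$ for which the quotient $\|Af_n\|_{L^p(t_0,t_1;X)}/\|f_n\|_{L^p(t_0,t_1;X)}$ diverges, which immediately forces $A \notin \mathcal{L}(L^p(t_0,t_1;X))$. Fix any $x_0 \in X$ with $\|x_0\|_X = 1$ and define $f_n(t) := (t-t_0)^n x_0$ on $[t_0,t_1]$. The choice of sequence is dictated by Lemma \ref{digammafunction}, whose placement just before the theorem is precisely what delivers the closed form needed to identify $Af_n$.

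The first step is to verify $f_n \in D(A)$ and to write down an explicit formula for $Af_n$. After translating $s \mapsto s-t_0$ and reflecting $v=(t-t_0)-(s-t_0)$, Lemma \ref{digammafunction} with $n$ replaced by $n+1$ yields
\begin{equation*}
\int_{t_0}^t \ln(t-s)(s-t_0)^n\, ds = \frac{(t-t_0)^{n+1}\bigl[\ln(t-t_0) + \psi(1) - \psi(n+2)\bigr]}{n+1},
\end{equation*}
for every $t \in (t_0,t_1]$. This expression is absolutely continuous on $[t_0,t_1]$; differentiating and applying the recurrence $\psi(n+2) = \psi(n+1) + 1/(n+1)$, its derivative equals $(t-t_0)^n[\ln(t-t_0) + \psi(1) - \psi(n+1)]$, a function that clearly belongs to $L^p(t_0,t_1;\mathbb{R})$ for every $1 \leq p \leq \infty$. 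Theorem \ref{gerinf} then places $f_n \in D(A)$ and, via \eqref{charac}, identifies
\begin{equation*}
Af_n(t) = (t-t_0)^n\bigl[\ln(t-t_0) - \psi(n+1)\bigr]\, x_0, \qquad \text{a.e. } t \in [t_0,t_1].
\end{equation*}

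Writing $L := t_1 - t_0$, the denominator is elementary: $\|f_n\|_{L^p}^p = L^{np+1}/(np+1)$ when $p < \infty$ and $\|f_n\|_{L^\infty} = L^n$. For the lower bound on the numerator, I would restrict the integral (respectively the essential supremum) to $t - t_0 \in [L/2, L]$: on that set $(t-t_0)^n \geq (L/2)^n$ and, for $n$ sufficiently large, $|\ln(t-t_0) - \psi(n+1)| \geq \psi(n+1) - C_L$ with $C_L := \max\{|\ln L|, |\ln(L/2)|\}$. Combining these estimates produces
\begin{equation*}
\frac{\|Af_n\|_{L^p(t_0,t_1;X)}}{\|f_n\|_{L^p(t_0,t_1;X)}} \geq c\,\bigl(\psi(n+1) - C_L\bigr),
\end{equation*}
for a positive constant $c = c(L,p)$; since $\psi(n+1) = -\gamma + \sum_{k=1}^n 1/k \to +\infty$, this quotient diverges, proving $A$ unbounded. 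The main technical point is the explicit derivation of $Af_n$ — fusing Lemma \ref{digammafunction} with the characterization \eqref{charac} — after which everything reduces to a routine norm estimate hinging only on the logarithmic growth of the Digamma function.
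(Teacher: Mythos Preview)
Your overall strategy matches the paper's for $1\leq p<\infty$: the same test functions $f_n(t)=(t-t_0)^n x_0$, the same identification of $Af_n$ via Lemma~\ref{digammafunction} and Theorem~\ref{gerinf} (the paper integrates by parts first and applies the lemma with index $n$; you apply it with index $n+1$ and then differentiate, which is an equivalent route to the same formula). For $p=\infty$ the paper instead exhibits a single function $\phi(t)\equiv x$ lying outside $D(A)$, whereas you reuse the sequence $f_n$; that variant is also legitimate in principle.

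There is, however, a real gap in your final norm estimate. Replacing $(t-t_0)^n$ by the pointwise lower bound $(L/2)^n$ on the set $\{t-t_0\in[L/2,L]\}$ costs a factor $2^{-n}$ in the numerator that the denominator does \emph{not} compensate: carrying your two pointwise bounds through for $1\leq p<\infty$ gives only
\[
\frac{\|Af_n\|_{L^p(t_0,t_1;X)}}{\|f_n\|_{L^p(t_0,t_1;X)}}\;\geq\;2^{-(n+1/p)}(np+1)^{1/p}\bigl(\psi(n+1)-C_L\bigr)\longrightarrow 0,
\]
since $2^{-n}$ overwhelms $n^{1/p}\ln n$. Thus no constant $c=c(L,p)$ independent of $n$ can be extracted from the bounds as you stated them; the same defect appears for $p=\infty$, where the crude bound yields a ratio $\gtrsim 2^{-n}(\psi(n+1)-C_L)\to 0$. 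The fix is exactly what the paper does: use only the bound on $|\ln(t-t_0)-\psi(n+1)|$ and keep $(t-t_0)^{np}$ inside the integral, computing
\[
\int_{L/2}^{L}u^{np}\,du=\frac{L^{np+1}}{np+1}\Bigl[1-2^{-(np+1)}\Bigr],
\]
which equals $\|f_n\|_{L^p}^p$ up to the harmless factor $1-2^{-(np+1)}$. The quotient is then bounded below by $\bigl(\psi(n+1)-C_L\bigr)\bigl[1-2^{-(np+1)}\bigr]^{1/p}\to\infty$, and for $p=\infty$ one simply evaluates at $t=t_1$ rather than invoking $(L/2)^n$.
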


\begin{proof} At first consider $p=\infty$. If we choose $x\in X$, with $\|x\|_X=1$, and define function $\phi\in L^\infty(t_0,t_1,X)$ by $\phi(t)=x$, we have that
$$\int_{t_0}^t\ln(t-s)\phi(s)\,ds=(t-t_0)\big[\ln(t-t_0)-1\big]x,$$
for almost every $t\in[t_0,t_1]$. Since the right side of the above equality is absolutely continuous we have
$$\dfrac{d}{dt}\left[\int_{t_0}^t\ln(t-s)\phi(s)\,ds\right]=\ln(t-t_0)x,$$
for almost every $t\in[t_0,t_1]$. However, since $\ln(t-t_0)x\not\in L^\infty(t_0,t_1,X)$, Theorem \ref{gerinf} ensures that $\phi\not\in D(A)$, when $D(A)$ is viewed as a domain in $L^\infty(t_0,t_1;X)$. This implies that $D(A)\subsetneq L^\infty(t_0,t_1;X)$, i.e., $A\not\in \mathcal{L}(L^\infty(t_0,t_1;X))$.

Now consider $1\leq p<\infty$. Recall from identity \eqref{charac} that $A=-\psi(1)Id+\widetilde{A}$, where $D(\widetilde{A})=D({A})$ and $\widetilde{A}:D(\widetilde{A})\subset L^p(t_0,t_1;X)\rightarrow L^p(t_0,t_1;X)$ is the linear operator given by
$$\widetilde{A}f(t)= \dfrac{d}{dt}\int_{t_0}^t\ln(t-s)f(s)\,ds.$$
Thus, it is clear that $\widetilde{A}$ is an unbounded operator if, and only if, $A$ is an unbounded operator. Therefore we shall dedicate our efforts to prove that $\widetilde{A}$ is an unbounded operator.

Let $x\in X$, with $\|x\|_X=1$, $n\in\mathbb{N}^*$ and consider sequence $\phi_{n}(t)=(t-t_0)^nx$. Observe that
$$\|\phi_n\|_{L^p(t_0,t_1;X)}=\left[\dfrac{(t_1-t_0)^{np+1}}{np+1}\right]^{1/p},$$
i.e., $\phi_n\in L^p(t_0,t_1;X)$, for any $n\in\mathbb{N}$.

On the other hand, observe that the regularity of $\phi_n(t)$ and Theorem \ref{leibnizint} allow us to deduce that
$$\dfrac{d}{dt}\int_{t_0}^t\ln(t-s)\phi_n(s)\,ds=\int_{t_0}^t\ln(t-s)\phi^\prime_n(s)\,ds,$$
for almost every $t\in[t_0,t_1]$. Therefore, by changing the variable $s=t-r$ we obtain
$$\dfrac{d}{dt}\int_{t_0}^t\ln(t-s)\phi_n(s)\,ds=xn\int_{0}^{t-t_0}(t-t_0-r)^{n-1}\ln(r)\,dr,$$
for almost every $t\in[t_0,t_1]$, which can be rewritten as
$$\dfrac{d}{dt}\int_{t_0}^t\ln(t-s)\phi_n(s)\,ds=x\Gamma(n+1)J_{0,\tau}^n\ln(\tau)\big|_{\tau=t-t_0},$$
for almost every $t\in[t_0,t_1]$. Thus, if we apply Lemma \ref{digammafunction} and change the variable $t=h+t_0$, we get
\begin{multline*}\left[\int_{t_0}^{t_1}\left\|\dfrac{d}{dt}\int_{t_0}^t\ln(t-s)\phi_n(s)\,ds\right\|_X^p\,dt\right]^{1/p}
\\=\left(\int_{0}^{t_1-t_0}\Big|h^n\big(\ln(h)+\psi(1)-\psi(n+1)\big)\Big|^p\,dh\right)^{1/p},\end{multline*}
where $\psi(z)$ denotes the Digamma function. Hence, since
$$\left(\int_{0}^{t_1-t_0}\Big|h^n\big(\ln(h)+\psi(1)-\psi(n+1)\big)\Big|^p\,dh\right)^{1/p}<\infty,$$
for any $n\in\mathbb{N}$, we deduce that $\phi_n\in D(\widetilde{A})$, for any $n\in\mathbb{N}$.

Now, since $\psi(n)\rightarrow\infty$, when $n\rightarrow\infty$, there exists $N_0\in\mathbb{N}$ such that
$$\psi(n+1)>\psi(1)+\ln(t_1-t_0),$$
for any $n\geq N_0$.
If this is the case, we know that for any $(t_1-t_0)/2\leq h\leq t_1-t_0$ it holds that
$$\ln(h)\leq\ln(t_1-t_0)<\psi(n+1)-\psi(1).$$
and therefore
\begin{multline*}\left\{\int_{0}^{t_1-t_0}\Big|h^n\big(\ln(h)+\psi(1)-\psi(n+1)\big)\Big|^p\,dh\right\}^{1/p}\\
\hspace*{-2cm}\geq\left\{\int_{(t_1-t_0)/2}^{t_1-t_0}\Big|h^n\big(\ln(h)+\psi(1)-\psi(n+1)\big)\Big|^p\,dh\right\}^{1/p}\\
\hspace*{2cm}\geq\big[\psi(n+1)-\psi(1)-\ln(t_1-t_0)\big]\left\{\int_{(t_1-t_0)/2}^{t_1-t_0}h^{np}\,dh\right\}^{1/p}\\
=\big[\psi(n+1)-\psi(1)-\ln(t_1-t_0)\big]\left[1-\dfrac{1}{2^{np+1}}\right]^{1/p}\left[\dfrac{(t_1-t_0)^{np+1}}{np+1}\right]^{1/p}.\end{multline*}

But then we deduce that
\begin{multline*}\lim_{n\rightarrow\infty}\dfrac{\|\widetilde{A}\phi_n\|_{L^p(t_0,t_1;X)}}{\|\phi_n\|_{L^p(t_0,t_1;X)}}\geq\lim_{n\rightarrow\infty}\big[\psi(n+1)-\psi(1)-\ln(t_1-t_0)\big]
\left[1-\dfrac{1}{2^{np+1}}\right]^{1/p}=\infty,\end{multline*}
i.e. $\widetilde{A}$ is an unbounded operator.
\end{proof}

A direct consequence of the above theorem and of item $(iv)$ of Theorem \ref{mainsemigrupo} is the following result.

\begin{corollary} The RL fractional integral operator from $L^p(t_0,t_1;X)$ into itself, is not uniformly continuous when $\alpha\rightarrow0^+$.
\end{corollary}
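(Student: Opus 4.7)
The plan is to chain together three results already established in the paper. The semigroup property, namely that $\{J_{t_0,t}^\alpha:\alpha\geq0\}$ is a $C_0$-semigroup on $L^p(t_0,t_1;X)$, is given by Theorem \ref{covproper}. The preceding theorem identifies its infinitesimal generator $A$ and shows that $A$ is unbounded. Finally, item $(iv)$ of Theorem \ref{mainsemigrupo} provides the equivalence: a $C_0$-semigroup is uniformly continuous if and only if its infinitesimal generator belongs to $\mathcal{L}(X)$.

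First I would invoke Theorem \ref{covproper} to record that $\{J_{t_0,t}^\alpha:\alpha\geq 0\}$ is a $C_0$-semigroup in $L^p(t_0,t_1;X)$ for every $1\leq p\leq\infty$, so that the semigroup framework applies. Then I would recall the conclusion of the theorem immediately above, that the infinitesimal generator $A:D(A)\subset L^p(t_0,t_1;X)\rightarrow L^p(t_0,t_1;X)$ is unbounded; in particular, $A\notin\mathcal{L}(L^p(t_0,t_1;X))$.

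By the contrapositive of item $(iv)$ of Theorem \ref{mainsemigrupo}, it follows that $\{J_{t_0,t}^\alpha:\alpha\geq 0\}$ is not uniformly continuous. Unpacking the definition of uniform continuity of a semigroup, this is precisely the statement
\[
\lim_{\alpha\to 0^+}\bigl\|J_{t_0,t}^\alpha - Id\bigr\|_{\mathcal{L}(L^p(t_0,t_1;X))}\neq 0,
\]
i.e., the RL fractional integral operator is not uniformly continuous as $\alpha\to 0^+$.

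There is essentially no obstacle here; the proof is a one-line invocation of the three preceding results, and the only thing to be careful about is ensuring that the semigroup language matches the statement of the corollary, since the reader might wonder whether failure of uniform continuity at $\alpha_0=0$ is consistent with Theorem \ref{uniformconti01} (which established uniform continuity at every $\alpha_0>0$). This distinction is exactly the motivation for introducing the semigroup machinery, and the corollary records that $\alpha_0=0$ is the genuine exception.
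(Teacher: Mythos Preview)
Your proposal is correct and follows essentially the same approach as the paper: the corollary is stated as a direct consequence of the preceding theorem (unboundedness of the generator $A$) together with item $(iv)$ of Theorem \ref{mainsemigrupo}, and you have simply made this chain of implications explicit.
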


\subsection{The Unbounded Interval Case}

When $I=[t_0,\infty)$, the situation is completely different, as shown by the following result:

\begin{theorem} Let $1\leq p\leq\infty$ and $X$ a non trivial vector space. The RL fractional integral of order $\alpha>0$ is not well-defined as an operator from $L^{p}(t_0,\infty;X)$ into itself.
\end{theorem}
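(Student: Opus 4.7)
I aim to exhibit, for each $\alpha > 0$ and $1 \le p \le \infty$, a function $f \in L^p(t_0,\infty; X)$ whose RL fractional integral fails to lie in $L^p(t_0,\infty; X)$. The counterexamples will have the form $f(t) = \varphi(t)\, x_0$, where $x_0 \in X$ is a fixed non-zero vector (available since $X$ is non-trivial) and $\varphi$ is a scalar decay profile chosen to match $p$. Existence of $J^\alpha_{t_0,t} f(t)$ for a.e.\ $t \in [t_0,\infty)$ is automatic from Theorem \ref{ajustefinal}(i), because each such $f$ is integrable on every bounded subinterval $[t_0,b]$.

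For $p = \infty$, I take $\varphi \equiv 1$; a direct computation gives $J^\alpha_{t_0,t} f(t) = [(t-t_0)^\alpha / \Gamma(\alpha+1)]\, x_0$, which is unbounded as $t \to \infty$, so $J^\alpha f \notin L^\infty(t_0,\infty; X)$. For $1 \le p < \infty$, I fix $\beta \in \bigl(1/p,\, \alpha + 1/p\bigr)$ (a non-empty interval since $\alpha > 0$) and set $\varphi(t) = (1 + t - t_0)^{-\beta}$. The condition $\beta p > 1$ guarantees $f \in L^p(t_0,\infty; X)$. After the substitution $T = t - t_0$, $r = s - t_0$ and restriction of the integration to $r \in [T/2,\, T]$, where $(1+r)^{-\beta} \ge (1+T)^{-\beta}$, I obtain
$$\|J^\alpha_{t_0,t} f(t)\|_X \,\ge\, \frac{\|x_0\|_X}{\Gamma(\alpha)}\,(1+T)^{-\beta} \int_{T/2}^T (T-r)^{\alpha-1}\, dr \,=\, \frac{\|x_0\|_X\, T^\alpha}{2^\alpha\, \Gamma(\alpha + 1)\,(1+T)^\beta},$$
which grows like $c\, T^{\alpha - \beta}$ as $T \to \infty$. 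Since $\beta < \alpha + 1/p$, we have $(\alpha - \beta)\,p > -1$, so $\int^\infty T^{(\alpha - \beta)p}\, dT = \infty$ and consequently $J^\alpha f \notin L^p(t_0,\infty; X)$.

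I do not foresee any significant obstacle. The argument reduces to choosing the decay rate $\beta$ so that $f$ is just barely in $L^p$ while $J^\alpha f$, through the kernel $(T-r)^{\alpha-1}$, picks up enough mass near $r = T$ to fail $L^p$-integrability at infinity; the open interval $\bigl(1/p,\, 1/p + \alpha\bigr)$ is precisely the range making both conditions compatible. The only minor care needed is to verify that the lower bound on $J^\alpha_{t_0,t} f(t)$ holds for every $t$ large enough and not just asymptotically, which follows from the explicit inequality above.
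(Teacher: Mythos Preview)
Your proof is correct. For $p=\infty$ you use exactly the same constant function as the paper. For $1\le p<\infty$ the approaches differ slightly: the paper chooses the borderline decay $\sigma_p(s)=(s-t_0)^{-(\alpha+1/p)}x$ (cut off to zero on $[t_0,t_0+1]$), then reduces $\|J^\alpha\sigma_p\|_{L^p}$ via the substitution $s-t_0=w(t-t_0)$ to a Beta-type integral and extracts a divergent factor $\int_0^{1/2}r^{-1}\,dr$. You instead pick a strictly sub-critical exponent $\beta\in(1/p,\alpha+1/p)$ and obtain a pointwise lower bound on $J^\alpha f$ by restricting the convolution to $r\in[T/2,T]$; the resulting power $T^{\alpha-\beta}$ then fails $L^p$ at infinity by a direct check. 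Your route is a bit more elementary and transparent (no delicate endpoint exponent, no incomplete Beta computation), at the cost of introducing an auxiliary parameter $\beta$; the paper's route pinpoints the exact threshold exponent but requires more manipulation. Both arguments are short and sound.
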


\begin{proof} Let $x\in X$, with $\|x\|_X=1$, consider $\alpha>0$ and define $\sigma_\infty:[t_0,\infty)\rightarrow X$ by $\sigma_\infty(t)=x$. Clearly $\sigma_\infty\in L^\infty(t_0,\infty;X)$. However, since
$$J_{t_0,t}^\alpha\sigma_\infty(t)=\dfrac{(t-t_0)^\alpha x}{\Gamma(\alpha+1)},$$
we conclude that $J_{t_0,t}^\alpha\sigma_\infty(t)$ does not belongs to $L^\infty(t_0,\infty;X)$.

Again, consider $x\in X$, with $\|x\|_X=1$. If $1\leq p<\infty$ and we define function $\sigma_p:[t_0,\infty)\rightarrow X$ by
$$\sigma_p(s)=\left\{\begin{array}{ll}0,&\textrm{if }t_0\leq s\leq t_0+1,\vspace*{0.2cm}\\(s-t_0)^{-(\alpha+(1/p))}x,&\textrm{if }s> t_0+1,\end{array}\right.$$
we observe that $\sigma_p\in L^p(t_0,\infty;X)$, since
$$\|\sigma_p\|_{L^p(t_0,\infty;X)}=\left[{\int_{t_0+1}^\infty \Big[(s-t_0)^{-[\alpha+(1/p)]}\Big]^p}\,ds\right]^{1/p}=\dfrac{1}{(\alpha p)^{1/p}}.$$
On the other hand, by changing variables $s-t_0=w(t-t_0)$, we have that
\begin{align*}J_{t_0,t}^\alpha \sigma_p(t)&=\left\{\begin{array}{ll}0,&\textrm{if }t_0\leq t\leq t_0+1,\vspace*{0.2cm}\\\dfrac{x}{\Gamma(\alpha)}\displaystyle\int_{t_0+1}^{t}{(t-s)^{\alpha-1}(s-t_0)^{-[\alpha+(1/p)]}}\,ds,&\textrm{if }t> t_0+1,\end{array}\right.\\
&=\left\{\begin{array}{ll}0,&\textrm{if }t_0\leq t\leq t_0+1,\vspace*{0.2cm}\\\dfrac{x(t-t_0)^{-(1/p)}}{\Gamma(\alpha)}\displaystyle\int_{1/(t-t_0)}^{1}{(1-w)^{\alpha-1}w^{-[\alpha+(1/p)]}}\,dw,&\textrm{if }t> t_0+1.\end{array}\right.\end{align*}
Again, by changing variables $t=t_0+(1/r)$, we deduce
\begin{align*}\|J_{t_0,t}^\alpha \sigma_p\|_{L^p(t_0,\infty;X)}&=\left\{\int_{t_0+1}^\infty{\left[\dfrac{(t-t_0)^{-(1/p)}}{\Gamma(\alpha)}\displaystyle\int_{1/(t-t_0)}^{1}{(1-w)^{\alpha-1}w^{-[\alpha+(1/p)]}}\,dw\right]^p}\,dt\right\}^{1/p}
\\&=\dfrac{1}{\Gamma(\alpha)}\left\{\int_{0}^1{r^{-1}\left[\displaystyle\int_{r}^{1}{(1-w)^{\alpha-1}w^{-[\alpha+(1/p)]}}\,dw\right]^p}\,dr\right\}^{1/p}
\\&\geq\dfrac{1}{\Gamma(\alpha)}\left\{\int_{0}^{1/2}{r^{-1}\left[\displaystyle\int_{r}^{1}{(1-w)^{\alpha-1}w^{-[\alpha+(1/p)]}}\,dw\right]^p}\,dr\right\}^{1/p}
\\&\geq\dfrac{1}{\Gamma(\alpha)}\left\{\int_{0}^{1/2}{r^{-1}\left[\displaystyle\int_{1/2}^{1}{(1-w)^{\alpha-1}w^{-[\alpha+(1/p)]}}\,dw\right]^p}\,dr\right\}^{1/p}
\\&=\dfrac{1}{\Gamma(\alpha)}\underbrace{\left\{\int_{0}^{1/2}{r^{-1}}\,dr\right\}^{1/p}}_{=\infty}\underbrace{\left[\displaystyle\int_{1/2}^{1}{(1-w)^{\alpha-1}w^{-[\alpha+(1/p)]}}\,dw\right]}_{<\infty}=\infty.\end{align*}
In other words, we have concluded that $J_{t_0,t}^\alpha \sigma_p(t)$ does not belongs to $L^p(t_0,\infty;X)$.
\end{proof}

\section{On the Sharp Constants to Lp-Lp Estimates}\label{sec4} To conclude the discussion about the RL fractional integral operator of order $\alpha>0$ from $L^p(t_0,t_1;X)$ into itself, we now discuss some properties about the constants $N>0$ that makes inequality (c.f. Theorem \ref{minkowskiseq})
\begin{equation}\label{naturalinclusion3}\|J_{t_0,s}^\alpha f\|_{L^p(t_0,t_1;X)}\leq N \|f\|_{L^p(t_0,t_1;X)},\end{equation}
holds for every $f\in L^p(t_0,t_1;X)$.

It is clear that the sharpest constant to \eqref{naturalinclusion3} is the norm of the operator $J_{t_0,t}^\alpha$ from $L^p(t_0,t_1;X)$ into itself, however to shorten the notation we shall refer to this value just as $N_{\alpha,p}$. In this way, we may compute $N_{\alpha,p}$ through the identity
\begin{equation}\label{nalphapsup}
  N_{\alpha,p}=\sup_{\phi\in L^{p}(t_0,t_1;X),\,\phi\not=0}{\left[\dfrac{\|J^\alpha_{t_0,t}\phi\|_{L^{p}(t_0,t_1;X)}}{\|\phi\|_{L^{p}(t_0,t_1;X)}}\right]}.
\end{equation}

Observe that Theorem \ref{minkowskiseq} gives us that $N_{\alpha,p}\leq{(t_1-t_0)^\alpha}/{\Gamma(\alpha+1)}.$  Furthermore, for the cases $p=1$ and $p=\infty$, it is not difficult to deduce the exactly value of $N_{\alpha,1}$ and $N_{\alpha,\infty}$. This is the subject of our next result.

\begin{theorem}\label{primeiroultimo} Assume that $\alpha>0$. If $p=1$ or $p=\infty$ it holds that
$$N_{\alpha,p}=\dfrac{(t_1-t_0)^\alpha}{\Gamma(\alpha+1)}.$$
\end{theorem}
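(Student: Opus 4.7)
The plan is to leverage Theorem \ref{minkowskiseq}, which already supplies the upper bound $N_{\alpha,p} \leq (t_1-t_0)^\alpha/\Gamma(\alpha+1)$ for every $1 \leq p \leq \infty$. Hence the task reduces to exhibiting, for $p = \infty$ and $p = 1$ respectively, functions (or sequences of functions) that realize (or approach) this ratio in \eqref{nalphapsup}.

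For $p = \infty$, I would simply test the supremum against a constant function. Fixing any $x \in X$ with $\|x\|_X = 1$ and setting $f(t) \equiv x$ on $[t_0,t_1]$, a direct evaluation of \eqref{fracinit} yields
$$J^\alpha_{t_0,t} f(t) = \frac{(t-t_0)^\alpha}{\Gamma(\alpha+1)}\, x,$$
so $\|f\|_{L^\infty(t_0,t_1;X)} = 1$ while $\|J^\alpha_{t_0,t} f\|_{L^\infty(t_0,t_1;X)} = (t_1-t_0)^\alpha/\Gamma(\alpha+1)$. The supremum in \eqref{nalphapsup} is therefore attained and equal to the upper bound.

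For $p = 1$ I expect that no single function attains the supremum, so the plan is to use an approximating sequence that concentrates mass near the lower endpoint $t_0$. Fix $x \in X$ with $\|x\|_X = 1$ and, for each $n \in \mathbb{N}^*$ with $t_0 + 1/n \leq t_1$, set $f_n(t) = n\, \chi_{[t_0, t_0+1/n]}(t)\, x$, so that $\|f_n\|_{L^1(t_0,t_1;X)} = 1$. Because $\|J^\alpha_{t_0,t} f_n(t)\|_X$ reduces to the scalar expression $J^\alpha_{t_0,t}\big(n\chi_{[t_0,t_0+1/n]}\big)(t)$, Corollary \ref{FubiniFubini2} applied exactly as in the proof of Theorem \ref{minkowskiseq} gives
$$\|J^\alpha_{t_0,t} f_n\|_{L^1(t_0,t_1;X)} = \frac{n}{\Gamma(\alpha+1)} \int_{t_0}^{t_0+1/n} (t_1 - s)^\alpha\, ds.$$
By continuity of $s \mapsto (t_1-s)^\alpha$ at $s = t_0$, the right-hand side converges to $(t_1-t_0)^\alpha/\Gamma(\alpha+1)$ as $n \to \infty$, which supplies the matching lower bound for $N_{\alpha,1}$.

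I do not anticipate a genuine obstacle here: both the $p = \infty$ computation and the Fubini step in the $p=1$ case are essentially routine once the right test functions are chosen. The only delicate point is the selection of the $f_n$ in $L^1$, where placing the concentration at $t_0$ (rather than at $t_1$) is essential, since the kernel $(t_1-s)^\alpha$ arising after the Fubini swap is maximized precisely there.
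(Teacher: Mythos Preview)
Your proof is correct. The $p=\infty$ case is handled identically to the paper: both test the constant function $t\mapsto x$ with $\|x\|_X=1$ and observe that the supremum in \eqref{nalphapsup} is attained.

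For $p=1$ your route differs from the paper's. The paper employs the one-parameter family $\phi_\beta(t)=(t-t_0)^{-\beta}x$, $\beta\in(0,1)$, computes the ratio explicitly via the Beta integral to obtain
\[
\frac{\|J^\alpha_{t_0,t}\phi_\beta\|_{L^1}}{\|\phi_\beta\|_{L^1}}=\frac{\Gamma(2-\beta)(t_1-t_0)^\alpha}{\Gamma(\alpha+2-\beta)},
\]
and lets $\beta\to1^-$. You instead use the approximate Dirac sequence $f_n=n\,\chi_{[t_0,t_0+1/n]}\,x$ concentrating at $t_0$ and evaluate $\|J^\alpha_{t_0,t}f_n\|_{L^1}$ by the Fubini swap. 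Both sequences exploit the same mechanism (mass concentrating at $t_0$, where the kernel weight $(t_1-s)^\alpha$ is maximal after the change of order of integration), but your computation is more elementary since it avoids special-function identities. The paper's choice has the incidental advantage that the same family $\phi_\beta$ is later reused to produce lower bounds for $N_{\alpha,p}$ when $1<p<\infty$; your sequence would not serve that secondary purpose, but for the theorem at hand it is entirely adequate.
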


\begin{proof} To prove the case when $p=\infty$, consider $x\in X$, with $\|x\|_X=1$, and define $\phi_\infty:[t_0,t_1]\rightarrow{X}$ by $\phi_\infty(t)=x$. With this, we have
$$\|\phi_\infty\|_{L^{\infty}(t_0,t_1;X)}=1\qquad\textrm{and}\qquad\|J_{t_0,t}^\alpha\phi_\infty\|_{L^{\infty}(t_0,t_1;X)}={(t_1-t_0)^\alpha}/{\Gamma(\alpha+1)}.$$
Equality \eqref{nalphapsup} and Theorem \ref{minkowskiseq} allow us to deduce that
$$\dfrac{(t_1-t_0)^\alpha}{\Gamma(\alpha+1)}=\dfrac{\|J^\alpha_{t_0,t}\phi_\infty\|_{L^{\infty}(t_0,t_1;X)}}{\|\phi_\infty\|_{L^{\infty}(t_0,t_1;X)}}\leq\dfrac{(t_1-t_0)^\alpha}{\Gamma(\alpha+1)},$$
and, therefore,  $N_{\alpha,\infty}={(t_1-t_0)^\alpha}/{\Gamma(\alpha+1)}.$

To prove the case $p=1$, let $x\in X$, with $\|x\|_X=1$, $\beta\in(0,1)$ and $\phi_\beta:[t_0,t_1]\rightarrow X$ be given by
$$\phi_\beta(t)=(t-t_0)^{-\beta}x.$$
Since we have
$$\|\phi_\beta\|_{L^{1}(t_0,t_1;X)}=\dfrac{(t_1-t_0)^{1-\beta}}{1-\beta}\quad\textrm{and}\quad
\|J_{t_0,t}^\alpha\phi_\beta\|_{L^{1}(t_0,t_1;X)}=\dfrac{\Gamma(1-\beta)(t_1-t_0)^{\alpha+1-\beta}}{\Gamma(\alpha+2-\beta)},$$
we obtain
$$\dfrac{\|J^\alpha_{t_0,t}\phi_\beta\|_{L^{1}(t_0,t_1;X)}}{\|\phi_\beta\|_{L^{1}(t_0,t_1;X)}}=\dfrac{\Gamma(2-\beta)(t_1-t_0)^\alpha}{\Gamma(\alpha+2-\beta)},$$
for any $\beta\in(0,1)$. But then, Theorem \ref{minkowskiseq} and the Squeeze Theorem ensures that
$$\lim_{\beta\rightarrow1^-}{\left[\dfrac{\Gamma(2-\beta)(t_1-t_0)^\alpha}{\Gamma(\alpha+2-\beta)}\right]}\leq\sup_{\phi\in L^{1}(t_0,t_1;X),\,\phi\not=0}{\left[\dfrac{\|J^\alpha_{t_0,t}\phi\|_{L^{1}(t_0,t_1;X)}}{\|\phi\|_{L^{1}(t_0,t_1;X)}}\right]}\leq\dfrac{(t_1-t_0)^\alpha}{\Gamma(\alpha+1)},$$
or in other words, $N_{\alpha,1}={(t_1-t_0)^\alpha}/{\Gamma(\alpha+1)}.$
\end{proof}

Since we are not able to give analogous results when $1<p<\infty$, in the next two subsections we present an interesting study that allows us to obtain more information about upper and lower bounds to $N_{\alpha,p}$.

\subsection{Maximizing functions.}

Last theorem proves that in $L^\infty(t_0,t_1;X)$ there exists a function $\phi_\infty(t)$ such that
$$N_{\alpha,\infty}={\dfrac{\|J^\alpha_{t_0,t}\phi_\infty\|_{L^{\infty}(t_0,t_1;X)}}{\|\phi_\infty\|_{L^{\infty}(t_0,t_1;X)}}}.$$
This insight made us question in which $L^p(t_0,t_1;X)$ space this behavior is repeated. Bearing this in mind, we introduce the notion of maximizing function, which is an idea inspired by Lieb with his impressive work \cite{Lieb1}.
\begin{definition} If $1\leq p\leq\infty$, we say that $f\in L^p(t_0,t_1;X)$, with $f\not=0$, is a maximizing function to inequality \eqref{naturalinclusion3} if
$$N_{\alpha,p}=\dfrac{\|J_{t_0,t}^\alpha f\|_{L^p(t_0,t_1;X)}}{\|f\|_{L^p(t_0,t_1;X)}}.$$
\end{definition}

Bellow we present the result that discuss the existence of maximizing functions to inequality \eqref{naturalinclusion3}.

\begin{theorem}\label{maxfunctions} Let $\alpha>0$. Then we have that:\vspace*{0,1cm}
\begin{itemize}
\item[(i)] If $p=\infty$, there exists a maximizing function to inequality \eqref{naturalinclusion3}.\vspace*{0,2cm}
\item[(ii)] If $1<p<\infty$, $X$ is reflexive and for any bounded set $F\subset L^p(t_0,t_1;X)$ it holds \eqref{novahip}, then there exists a maximizing function to inequality \eqref{naturalinclusion3}.\vspace*{0,2cm}
\item[(iii)] It does not exists a maximizing function to inequality \eqref{naturalinclusion3} when $p=1$.
\end{itemize}
\end{theorem}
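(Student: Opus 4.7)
The plan splits along the three items, with the middle one being the substantive one.

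For item $(i)$, the constant function $\phi_\infty(t)=x$, with any $x\in X$ of unit norm, has already been shown in the proof of Theorem \ref{primeiroultimo} to satisfy $\|\phi_\infty\|_{L^\infty(t_0,t_1;X)}=1$ and $\|J^\alpha_{t_0,t}\phi_\infty\|_{L^\infty(t_0,t_1;X)}=(t_1-t_0)^\alpha/\Gamma(\alpha+1)=N_{\alpha,\infty}$, so $\phi_\infty$ is a maximizer by definition. Nothing new to prove here.

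For item $(ii)$, I would apply the direct method of the calculus of variations. Fix a normalized maximizing sequence $\{f_n\}\subset L^p(t_0,t_1;X)$ with $\|f_n\|_{L^p(t_0,t_1;X)}=1$ and $\|J^\alpha_{t_0,t}f_n\|_{L^p(t_0,t_1;X)}\to N_{\alpha,p}$. Since $X$ is reflexive and $1<p<\infty$, Remark \ref{remarktestf}$(i)$ tells us $L^p(t_0,t_1;X)$ is reflexive too, so some subsequence (not relabeled) converges weakly to an $f\in L^p(t_0,t_1;X)$, and weak lower semicontinuity of the norm gives $\|f\|_{L^p(t_0,t_1;X)}\leq 1$. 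The standing hypothesis \eqref{novahip} together with Theorem \ref{compactrieman} ensures that $J^\alpha_{t_0,t}$ is compact, and compact operators map weakly convergent sequences to norm convergent ones, so $J^\alpha_{t_0,t}f_n\to J^\alpha_{t_0,t}f$ in $L^p(t_0,t_1;X)$. Consequently $\|J^\alpha_{t_0,t}f\|_{L^p(t_0,t_1;X)}=N_{\alpha,p}$. Because $N_{\alpha,p}>0$ (test against any nonzero constant function), we have $f\neq 0$, and the sandwich $N_{\alpha,p}=\|J^\alpha_{t_0,t}f\|_{L^p(t_0,t_1;X)}\leq N_{\alpha,p}\|f\|_{L^p(t_0,t_1;X)}\leq N_{\alpha,p}$ forces $\|f\|_{L^p(t_0,t_1;X)}=1$, so $f$ is a maximizer.

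For item $(iii)$, I argue by contradiction. Suppose $f\in L^1(t_0,t_1;X)$ with $f\neq 0$ satisfies $\|J^\alpha_{t_0,t}f\|_{L^1(t_0,t_1;X)}=N_{\alpha,1}\|f\|_{L^1(t_0,t_1;X)}=(t_1-t_0)^\alpha\|f\|_{L^1(t_0,t_1;X)}/\Gamma(\alpha+1)$. Running the same chain of inequalities used for $X=\mathbb{R}$ in the proof of Theorem \ref{minkowskiseq} (Bochner triangle inequality followed by the order swap from Corollary \ref{FubiniFubini2}) gives
\begin{equation*}
\Gamma(\alpha+1)\|J^\alpha_{t_0,t}f\|_{L^1(t_0,t_1;X)}\leq\int_{t_0}^{t_1}(t_1-w)^\alpha\|f(w)\|_X\,dw\leq(t_1-t_0)^\alpha\|f\|_{L^1(t_0,t_1;X)}.
\end{equation*}
Our assumed equality forces equality in the last step, i.e.\ $\bigl[(t_1-t_0)^\alpha-(t_1-w)^\alpha\bigr]\|f(w)\|_X=0$ for almost every $w\in[t_0,t_1]$. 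Since the bracket is strictly positive off a null set, we conclude $\|f(w)\|_X=0$ almost everywhere, contradicting $f\neq 0$.

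The main technical input is the compactness of $J^\alpha_{t_0,t}$ in $(ii)$, but that is precisely the conclusion of Theorem \ref{compactrieman} under the hypothesis \eqref{novahip}; everything else is either a one-line verification (for $(i)$ and $(iii)$) or a standard weak-compactness argument.
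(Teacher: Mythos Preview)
Your proof is correct. Items $(i)$ and $(ii)$ follow the paper's argument essentially verbatim; in $(ii)$ you streamline slightly by invoking the fact that compact operators take weakly convergent sequences to norm convergent ones, whereas the paper extracts a strong limit $g$ from compactness separately and then identifies $g=J^\alpha_{t_0,t}f$ via weak limits, but this is the same proof.

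Item $(iii)$ is where you genuinely diverge from the paper. The paper bounds $\|J^\alpha_{t_0,t}f\|_{L^1}$ via the change-of-variables estimate from Theorem \ref{continuity}, writes the resulting upper bound as $\Gamma(\alpha)^{-1}\int_0^{t_1-t_0}h^{\alpha-1}\phi(h)\,dh$ with $\phi(h)=\|f\|_{L^1}^{-1}\int_{t_0}^{t_1-h}\|f(l)\|_X\,dl$ monotone decreasing, and then applies the Second Mean Value Theorem (Theorem \ref{secondmainvalue}) to produce a $\xi\in(0,t_1-t_0)$ with $\int_0^{t_1-t_0}h^{\alpha-1}\phi(h)\,dh=\xi^\alpha/\alpha$, yielding the contradiction $(t_1-t_0)^\alpha\le\xi^\alpha<(t_1-t_0)^\alpha$. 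Your route is more elementary: you read off directly from the Fubini computation in Theorem \ref{minkowskiseq} that equality in the $L^1$--$L^1$ bound forces $\int_{t_0}^{t_1}\big[(t_1-t_0)^\alpha-(t_1-w)^\alpha\big]\|f(w)\|_X\,dw=0$, and since the bracket vanishes only at $w=t_0$ this kills $f$ almost everywhere. Both arguments are short, but yours avoids the Second Mean Value Theorem entirely and makes the obstruction (any maximizer would have to be concentrated at the left endpoint) completely transparent.
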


\begin{proof} $(i)$ It follows directly from the first part of the proof of Theorem \ref{primeiroultimo}.\vspace*{0,2cm}

$(ii)$ Let $1<p<\infty$. By definition of $N_{\alpha,p}$, we may assume that there exists a sequence $\{f_n\}_{n=1}^\infty\subset L^{p}(t_0,t_1;X)$, with $\|f_n\|_{L^{p}(t_0,t_1;X)}=1$, such that
$$\lim_{n\rightarrow\infty}{\|J^\alpha_{t_0,t}f_n\|_{L^{p}(t_0,t_1;X)}}=N_{\alpha,p}.$$

Since $L^{p}(t_0,t_1;X)$ is reflexive (see item $(i)$ of Remark \ref{remarktestf}), taking into account Kakutani's Theorem (see \cite[Theorem 3.17]{Brezis1}), we deduce the existence of a subsequence $\{f_{n_k}\}_{k=1}^\infty\subset\{f_n\}_{n=1}^\infty$ and a limit function $f\in L^{p}(t_0,t_1;X)$, with $\|f\|_{L^{p}(t_0,t_1;X)}\leq1$, such that
$$f_{n_k}\rightarrow f,$$
in the weak-topology of $L^{p}(t_0,t_1;X)$.

On the other hand, since Theorem \ref{compactrieman} ensures that $J_{t_0,t}^\alpha$ is a compact operator, there should exists a subsequence $\{f_{n_{k_l}}\}_{l=1}^\infty\subset \{f_{n_k}\}_{k=1}^\infty$ and $g\in L^{p}(t_0,t_1;X)$ such that
$$J_{t_0,t}^\alpha f_{n_{k_l}}\rightarrow g,$$
in the strong-topology of $L^{p}(t_0,t_1;X)$. It is thus clear that $\|g\|_{L^{p}(t_0,t_1;X)}=N_{\alpha,p}$.

Again, the boundeness of $J_{t_0,t}^\alpha$ allows us to conclude that
$$J_{t_0,t}^\alpha f_{n_{k_l}}\rightarrow J_{t_0,t}^\alpha f,$$
in the weak-topology of $L^{p}(t_0,t_1;X)$. Since convergence in the strong-topology implies convergence in the weak-topology, the uniqueness of the limit allows us to conclude that
$$J_{t_0,t}^\alpha f(t)=g(t),$$
for almost every $t\in[t_0,t_1]$. Note that we still need to verify that $\|f\|_{L^p(t_0,t_1;X)}=1$ in order to $f(t)$ be a maximizing function. But, $\|f\|_{L^p(t_0,t_1;X)}=1$ since
$$N_{\alpha,p}=\|J_{t_0,t}^\alpha f\|_{L^p(t_0,t_1;X)}\leq N_{\alpha,p}\|f\|_{L^p(t_0,t_1;X)}\Longrightarrow 1\leq\|f\|_{L^p(t_0,t_1;X)}.$$

$(iii)$ Assume that there exists $f\in L^1(t_0,t_1;X)$ such that
$$N_{\alpha,1}=\dfrac{\|J_{t_0,t}^\alpha f\|_{L^1(t_0,t_1;X)}}{\|f\|_{L^1(t_0,t_1;X)}}.$$
Following the changing of variables done in the proof of Theorem \ref{continuity} we obtain the inequality
$$\int_{t_0}^{t_1}{\left\|J_{t_0,s}^\alpha f(s)\right\|_X}\,ds\leq \dfrac{1}{\Gamma(\alpha)} \int_{0}^{t_1-t_0}h^{\alpha-1}\left[\int_{t_0}^{t_1-h}\|f(l)\|_X\,dl\right]\,dh.$$

Since Theorem \ref{primeiroultimo} ensures that
$$N_{\alpha,1}=\dfrac{(t_1-t_0)^\alpha}{\Gamma(\alpha+1)},$$
we conclude that
\begin{equation}\label{fimdas1}\dfrac{(t_1-t_0)^\alpha}{\Gamma(\alpha+1)}=\dfrac{\|J_{t_0,t}^\alpha f\|_{L^1(t_0,t_1;X)}}{\|f\|_{L^1(t_0,t_1;X)}}\leq \dfrac{1}{\Gamma(\alpha)} \int_{0}^{t_1-t_0}h^{\alpha-1}\underbrace{\left[\dfrac{\int_{t_0}^{t_1-h}\|f(l)\|_X\,dl}{\|f\|_{L^1(t_0,t_1;X)}}\right]}_{=\phi(h)}\,dh.\end{equation}

Since $\phi(h)$ is monotonically decreasing and $h^{\alpha-1}$ is integrable in $(0,t_1-t_0)$, Theorem \ref{secondmainvalue} ensures the existence of $\xi\in(0,t_1-t_0)$ such that
\begin{multline}\label{fimdas2}\int_{0}^{t_1-t_0}h^{\alpha-1}{\phi(h)}\,dh=\phi(0^+)\left[\int_{0}^\xi h^{\alpha-1}\,dh\right]+\phi([t_1-t_0]^-)\left[\int_{\xi}^{t_1-t_0}h^{\alpha-1}\,dh\right]\\
=\int_{0}^\xi h^{\alpha-1}\,dh=\dfrac{\xi^\alpha}{\alpha}\end{multline}

Therefore, from \eqref{fimdas1} and \eqref{fimdas2} we deduce that
$$\dfrac{(t_1-t_0)^\alpha}{\Gamma(\alpha+1)}\leq \dfrac{\xi^\alpha}{\Gamma(\alpha+1)}<\dfrac{{(t_1-t_0)}^\alpha}{\Gamma(\alpha+1)},$$
what is a contradiction. This completes the proof.
\end{proof}

When $X$ is finite dimensional, since $X$ is reflexive and $J_{t_0,t}^\alpha$ is compact, we obtain directly from the above Theorem that

\begin{corollary} Let $\alpha>0$ and assume that $X$ is a finite dimensional Banach space. Then we have that:\vspace*{0,1cm}
\begin{itemize}
\item[(i)] If $1<p\leq\infty$ there exists a maximizing function to inequality \eqref{naturalinclusion3}.\vspace*{0,2cm}
\item[(ii)] It does not exists a maximizing function to inequality \eqref{naturalinclusion3} when $p=1$.
\end{itemize}
\end{corollary}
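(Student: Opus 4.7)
The plan is to derive both items directly from Theorem \ref{maxfunctions}, by checking that in the finite dimensional setting each of its hypotheses is automatic.

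First, for part $(i)$ with $p=\infty$, I would simply cite item $(i)$ of Theorem \ref{maxfunctions}, which already produces a maximizing function with no restriction on $X$; nothing further is needed.

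Next, for part $(i)$ with $1<p<\infty$, the strategy is to invoke item $(ii)$ of Theorem \ref{maxfunctions}. Its two assumptions are the reflexivity of $X$ and the compactness-type condition \eqref{novahip} for every bounded $F\subset L^p(t_0,t_1;X)$. Reflexivity is automatic since $\dim X<\infty$. For \eqref{novahip}, I would recycle the H\"older estimate already carried out inside Corollary \ref{coroaux}: it shows that $\{J_{t_0,t_1^*}^{1+\alpha}f(t_1^*)-J_{t_0,t_0^*}^{1+\alpha}f(t_0^*):f\in F\}$ is norm-bounded in $X$, and a norm-bounded subset of a finite dimensional normed space is relatively compact. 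With both hypotheses of item $(ii)$ of Theorem \ref{maxfunctions} verified, the existence of a maximizing function follows.

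Finally, part $(ii)$ requires nothing new: item $(iii)$ of Theorem \ref{maxfunctions} rules out maximizers when $p=1$ without any assumption on $X$, so the conclusion is immediate.

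No real obstacle arises; the entire argument is a direct transcription of Theorem \ref{maxfunctions}, together with the elementary observation that in finite dimensions norm-bounded sets are precompact. The only place where one might want to be careful is ensuring that the H\"older computation from Corollary \ref{coroaux} is genuinely independent of $\dim X$, which it is (the estimate only sees $\|f\|_{L^p(t_0,t_1;X)}$), so the finite-dimensionality is used solely to pass from boundedness to relative compactness.
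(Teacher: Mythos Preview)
Your proposal is correct and matches the paper's approach essentially verbatim: the paper simply notes that finite-dimensional $X$ is reflexive and that $J_{t_0,t}^\alpha$ is compact (via Corollary \ref{coroaux}), then applies Theorem \ref{maxfunctions}. Your verification of \eqref{novahip} through the H\"older estimate is precisely the content of Corollary \ref{coroaux}, so there is no meaningful difference.
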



\subsection{Lower and uper bounds to $N_{\alpha,p}$}

We would like to point out that even if the exact value of $N_{\alpha,p}$, when $1<p<\infty$, remains unclear to us, at least on reflexive Banach spaces, we can guarantee that this value is strictly lower than $(t_1-t_0)^{\alpha}/\Gamma(\alpha+1)$. This is the subject of our next Theorem. 

\begin{theorem}\label{maxfunctions2} Consider $\alpha>0$, $1<p<\infty$ and $X$ a reflexive Banach space. Then we have that $N_{\alpha,p}<(t_1-t_0)^{\alpha}/\Gamma(\alpha+1)$.
\end{theorem}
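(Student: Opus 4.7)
The plan is to reduce the estimate to the scalar case $X=\mathbb{R}$, where Corollary \ref{coroaux} and Theorem \ref{maxfunctions}$(ii)$ automatically furnish a maximizing function, and then to upgrade the Minkowski chain used in the proof of Theorem \ref{continuity} to a strict inequality for any nonzero input.

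\textbf{Step 1 (Reduction to scalars).} I would first verify that, for any nontrivial Banach space $X$, the operator norm does not depend on $X$. For the upper bound, the pointwise estimate $\|J^\alpha_{t_0,t}f(t)\|_X \leq J^\alpha_{t_0,t}\|f(\cdot)\|_X(t)$ combined with the scalar operator bound gives $\|J^\alpha f\|_{L^p(t_0,t_1;X)} \leq N_{\alpha,p}^{\mathbb{R}}\|f\|_{L^p(t_0,t_1;X)}$, hence $N_{\alpha,p} \leq N_{\alpha,p}^{\mathbb{R}}$. For the reverse bound, fix a unit vector $x_0\in X$ and observe that $g\mapsto g\cdot x_0$ is a linear isometry of $L^p(t_0,t_1;\mathbb{R})$ into $L^p(t_0,t_1;X)$ which intertwines $J^\alpha_{t_0,t}$. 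Thus $N_{\alpha,p} = N_{\alpha,p}^{\mathbb{R}}$, and it suffices to prove the strict inequality for $X = \mathbb{R}$.

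\textbf{Step 2 (Scalar maximizer).} Since $\mathbb{R}$ is finite dimensional, Corollary \ref{coroaux} provides the compactness of $J^\alpha_{t_0,t}$ on $L^p(t_0,t_1;\mathbb{R})$; since $\mathbb{R}$ is also reflexive, Theorem \ref{maxfunctions}$(ii)$ furnishes a maximizer $f_\star\in L^p(t_0,t_1;\mathbb{R})\setminus\{0\}$ with $\|J^\alpha_{t_0,t} f_\star\|_p = N_{\alpha,p}\|f_\star\|_p$. Using $|J^\alpha_{t_0,t}f_\star|\leq J^\alpha_{t_0,t}|f_\star|$, we may further replace $f_\star$ by $|f_\star|$ and assume $f_\star\geq 0$.

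\textbf{Step 3 (Strict Minkowski step).} I would then trace the estimate in the proof of Theorem \ref{continuity}. After the successive changes of variable $s=t-w$, $t=r+t_0$ and $l=r+t_0-s$, it reads
\begin{equation*}
\Gamma(\alpha)\|J^\alpha_{t_0,t} f_\star\|_p \leq \int_0^{t_1-t_0} s^{\alpha-1}\left[\int_{t_0}^{t_1-s} f_\star(l)^p\,dl\right]^{1/p}ds \leq \frac{(t_1-t_0)^\alpha}{\alpha}\|f_\star\|_p.
\end{equation*}
Set $G(s) := \|f_\star\|_p - \bigl[\int_{t_0}^{t_1-s} f_\star(l)^p\,dl\bigr]^{1/p}$. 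Then $G:[0,t_1-t_0]\to[0,\infty)$ is continuous, $G(0)=0$ and $G(t_1-t_0)=\|f_\star\|_p>0$; by continuity $G>0$ on some interval $(t_1-t_0-\delta, t_1-t_0)$, where also $s^{\alpha-1}>0$, so $\int_0^{t_1-t_0} s^{\alpha-1} G(s)\,ds > 0$. This forces the second inequality in the display above to be strict, yielding $\|J^\alpha_{t_0,t} f_\star\|_p < (t_1-t_0)^\alpha\|f_\star\|_p/\Gamma(\alpha+1)$.

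\textbf{Step 4 (Conclusion).} Combining Steps 2 and 3, $N_{\alpha,p} = \|J^\alpha_{t_0,t} f_\star\|_p/\|f_\star\|_p < (t_1-t_0)^\alpha/\Gamma(\alpha+1)$, and Step 1 transfers this strict inequality to every nontrivial reflexive $X$.

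The main obstacle I anticipate is Step 2: without compactness of $J^\alpha_{t_0,t}$ a maximizing sequence in $L^p(t_0,t_1;X)$ need only converge weakly, and pointwise strict inequality $\|J^\alpha f\|_p < (t_1-t_0)^\alpha\|f\|_p/\Gamma(\alpha+1)$ for every nonzero $f$ does not by itself prevent the supremum from being attained in the limit. The scalar reduction sidesteps this because $\mathbb{R}$ is automatically finite dimensional, so Corollary \ref{coroaux} supplies the missing compactness and Theorem \ref{maxfunctions}$(ii)$ produces an honest extremal to which the strict Minkowski argument of Step 3 applies.
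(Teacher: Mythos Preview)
Your proof is correct and follows a genuinely different route from the paper's. The paper works directly in $L^p(t_0,t_1;X)$: it invokes Theorem~\ref{maxfunctions}(ii) to obtain a maximizing function $f_p$ there, and then, to force strictness, applies the Second Mean Value Theorem (Theorem~\ref{secondmainvalue}) to the monotone function $\phi_p(h)=\big[\int_{t_0}^{t_1-h}\|f_p(l)\|_X^p\,dl\big]^{1/p}/\|f_p\|_{L^p}$, exactly as in the proof of Theorem~\ref{maxfunctions}(iii).

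Your two departures are: (1) the scalar reduction $N_{\alpha,p}^X=N_{\alpha,p}^{\mathbb{R}}$ via the isometric embedding $g\mapsto g\,x_0$, and (2) the elementary continuity argument on $G(s)$ in place of the Second Mean Value Theorem. The first difference is the more substantial one. It makes the reflexivity hypothesis on $X$ irrelevant (your argument gives the strict inequality for \emph{every} nontrivial Banach space), and it also sidesteps a delicate point in the paper's argument: Theorem~\ref{maxfunctions}(ii), as stated, requires not only reflexivity of $X$ but also the relative-compactness condition~\eqref{novahip}, which is not part of the hypotheses of Theorem~\ref{maxfunctions2}. By passing to $X=\mathbb{R}$ you get compactness of $J_{t_0,t}^\alpha$ for free from Corollary~\ref{coroaux}, so the existence of a maximizer is unproblematic. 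The second difference is a matter of taste; both devices exploit that $h\mapsto\int_{t_0}^{t_1-h}|f_\star|^p\,dl$ is continuous with value $0$ at $h=t_1-t_0$, and either yields the strict inequality on the last step of the Minkowski chain from Theorem~\ref{continuity}.
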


\begin{proof} Let $f_p\in L^p(t_0,t_1;X)$ be the maximizing function to inequality \eqref{naturalinclusion3}, ensured by Theorem \ref{maxfunctions}. If we assume that $N_{\alpha,p}=(t_1-t_0)^{\alpha}/\Gamma(\alpha+1)$, like it was observed in the proof of item $(iii)$ of Theorem \ref{maxfunctions}, we deduce that
\begin{equation*}\dfrac{(t_1-t_0)^\alpha}{\Gamma(\alpha+1)}=\left[\dfrac{\|J_{t_0,t}^\alpha f_p\|_{L^p(t_0,t_1;X)}}{\|f_p\|_{L^p(t_0,t_1;X)}}\right]\leq \dfrac{1}{\Gamma(\alpha)} \int_{0}^{t_1-t_0}h^{\alpha-1}\underbrace{\left\{\dfrac{\left[\int_{t_0}^{t_1-h}\|f_p(l)\|^p_X\,dl\right]^{1/p}}{\|f_p\|_{L^p(t_0,t_1;X)}}\right\}}_{=\phi_p(h)}\,dh.\end{equation*}

The remaining of the proof follows exactly as done in item $(iii)$ of Theorem \ref{maxfunctions}.

\end{proof}

\begin{corollary} Consider $\alpha>0$, $1<p<\infty$ and $X$ a finite dimensional Banach space. Then we have that $N_{\alpha,p}<(t_1-t_0)^{\alpha}/\Gamma(\alpha+1)$.
\end{corollary}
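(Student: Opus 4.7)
The plan is almost immediate, since this corollary is a specialization of Theorem \ref{maxfunctions2} to the finite dimensional setting. The only hypotheses that Theorem \ref{maxfunctions2} imposes on the Banach space $X$ are that $X$ be reflexive and that the existence of a maximizing function to inequality \eqref{naturalinclusion3} be guaranteed (via Theorem \ref{maxfunctions}, which in turn uses the compactness criterion \eqref{novahip}). So the task reduces to checking that both of these ingredients are automatic when $\dim X<\infty$.

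First I would recall the classical fact that every finite dimensional normed space is reflexive; this is standard and needs no separate argument. Next, I would invoke Corollary \ref{coroaux}, which asserts that when $X$ is finite dimensional the operator $J^\alpha_{t_0,t}:L^p(t_0,t_1;X)\to L^p(t_0,t_1;X)$ is compact for every $1\leq p<\infty$. In particular, its image of any bounded set is relatively compact, and combined with the identity
$$\Big[J_{t_0^*,t_1^*}\circ J_{t_0,t}^\alpha\Big](F)=\Big\{J_{t_0,t_1^*}^{1+\alpha} f(t_1^*)-J_{t_0,t_0^*}^{1+\alpha} f(t_0^*):f\in F\Big\}$$
already exploited in the proof of Theorem \ref{compactrieman}, this gives property \eqref{novahip} for every bounded $F\subset L^p(t_0,t_1;X)$. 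Thus both hypotheses of Theorem \ref{maxfunctions2} are met.

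Having verified reflexivity and \eqref{novahip}, I would simply apply Theorem \ref{maxfunctions2} with the given $\alpha>0$ and $1<p<\infty$ to conclude $N_{\alpha,p}<(t_1-t_0)^\alpha/\Gamma(\alpha+1)$. Since each step is essentially a citation, there is no serious obstacle; the only thing to double-check is that Corollary \ref{coroaux} delivers \eqref{novahip} uniformly, not merely the relative compactness of $J^\alpha_{t_0,t}F$ in $L^p$, but the proof of that corollary is phrased precisely in terms of \eqref{novahip}, so nothing further is required.
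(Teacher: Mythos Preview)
Your proposal is correct and follows essentially the same route as the paper: finite dimensional implies reflexive, so Theorem \ref{maxfunctions2} applies and yields the strict inequality. You are in fact a bit more careful than the paper, since you also verify condition \eqref{novahip} via Corollary \ref{coroaux}; this extra check is justified because the proof of Theorem \ref{maxfunctions2} relies on Theorem \ref{maxfunctions}(ii), whose hypothesis includes \eqref{novahip} even though the statement of Theorem \ref{maxfunctions2} only lists reflexivity.
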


\begin{proof} It follows directly from Theorem \ref{maxfunctions2} and the fact that $X$ is a finite dimensional Banach space.
\end{proof}

We dedicate the remainder of this section to obtain lower and (better) upper bounds to the value $N_{\alpha,p}$, when $1<p<\infty$.

Let us begin with some auxiliary lemmas. Before that, we just emphasize that for the remainder of this section, $p^*$ is denoting the Holder conjugate exponent of $p$, i.e., $p^*=p/(p-1)$.

\begin{lemma}\label{firstlemaaux} Assume that $\alpha>0$ and $1<p<\infty$.
\begin{itemize}
\item[(i)] If $\alpha>1/p^*$, we have that:
$$\|J_{t_0,t}^\alpha f\|_{L^p(t_0,t_1;X)}\leq \left\{\dfrac{(t_1-t_0)^\alpha}{\Gamma(\alpha)\big[(\alpha-1)p+1\big]^{1/p}\big(\alpha p^*\big)^{1/p^*}}\right\}\|f\|_{L^p(t_0,t_1;X)},$$
for every $f\in L^p(t_0,t_1;X)$. \vspace{0.2cm}
\item[(ii)] If $\alpha>1/p$, we have that:
$$\|J_{t_0,t}^\alpha f\|_{L^p(t_0,t_1;X)}\leq \left\{\dfrac{(t_1-t_0)^\alpha}{\Gamma(\alpha)\big[(\alpha-1)p^*+1\big]^{1/p^*}\big(\alpha p\big)^{1/p}}\right\}\|f\|_{L^p(t_0,t_1;X)},$$
for every $f\in L^p(t_0,t_1;X)$.
\end{itemize}
\end{lemma}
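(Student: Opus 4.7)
My plan is to prove the two items by Hölder's inequality applied in different orders.

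Item (ii) yields to the standard ``Hölder pointwise, then integrate'' technique. Starting from $\Gamma(\alpha)\|J^{\alpha}_{t_0,t}f(t)\|_X\leq\int_{t_0}^t(t-s)^{\alpha-1}\|f(s)\|_X\,ds$, I would apply Hölder's inequality in $s$ with conjugate exponents $p^{*}$ on the kernel and $p$ on $\|f(s)\|_X$. The hypothesis $\alpha>1/p$ ensures $(\alpha-1)p^{*}+1>0$, so $\int_{t_0}^{t}(t-s)^{(\alpha-1)p^{*}}\,ds=(t-t_0)^{(\alpha-1)p^{*}+1}/[(\alpha-1)p^{*}+1]$. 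Taking the $1/p^{*}$ root and using $[(\alpha-1)p^{*}+1]/p^{*}=\alpha-1/p$ gives the pointwise estimate
\begin{equation*}
\|J^{\alpha}_{t_0,t}f(t)\|_X\leq\frac{(t-t_0)^{\alpha-1/p}}{\Gamma(\alpha)[(\alpha-1)p^{*}+1]^{1/p^{*}}}\|f\|_{L^p(t_0,t_1;X)}.
\end{equation*}
Raising to the $p$-th power and integrating $(t-t_0)^{\alpha p-1}$ over $[t_0,t_1]$ produces $(t_1-t_0)^{\alpha p}/(\alpha p)$, which after the $1/p$ root contributes the factor $(\alpha p)^{1/p}$ appearing in the statement of (ii).

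For item (i), I would reverse the order and first apply Minkowski's integral inequality (Corollary \ref{minkowskiminkowski}). Viewing $J^{\alpha}_{t_0,t}f$ as a Bochner integral in the variable $s$, one obtains
\begin{equation*}
\Gamma(\alpha)\|J^{\alpha}_{t_0,t}f\|_{L^p(t_0,t_1;X)}\leq\int_{t_0}^{t_1}\|f(s)\|_X\left[\int_{s}^{t_1}(t-s)^{(\alpha-1)p}\,dt\right]^{1/p}ds.
\end{equation*}
The hypothesis $\alpha>1/p^{*}$ makes $(\alpha-1)p+1>0$, so the inner integral equals $(t_1-s)^{(\alpha-1)p+1}/[(\alpha-1)p+1]$, and its $1/p$ root is $(t_1-s)^{\alpha-1/p^{*}}/[(\alpha-1)p+1]^{1/p}$, furnishing the factor $[(\alpha-1)p+1]^{1/p}$. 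I would then apply Hölder's inequality in $s$ with exponents $p$ on $\|f(s)\|_X$ and $p^{*}$ on the weight $(t_1-s)^{\alpha-1/p^{*}}$; since $(\alpha-1/p^{*})p^{*}=\alpha p^{*}-1>-1$, the resulting $p^{*}$-integral evaluates to $(t_1-t_0)^{\alpha p^{*}}/(\alpha p^{*})$ and yields the factor $(\alpha p^{*})^{1/p^{*}}$ after the $1/p^{*}$ root.

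The main step worth verifying is the book-keeping of exponents: in (i), that $1/p^{*}+(\alpha-1/p^{*})=\alpha$, so that the powers of $(t_1-t_0)$ collapse correctly; and in (ii), that $(\alpha-1/p)p+1=\alpha p$, so the integral $\int_{t_0}^{t_1}(t-t_0)^{\alpha p-1}\,dt$ contributes the expected scaling. No serious obstacle is anticipated beyond this algebra; the mild asymmetry between the two items---direct Hölder versus Minkowski--then--Hölder---reflects the asymmetry of their hypotheses $\alpha>1/p$ and $\alpha>1/p^{*}$, and exactly swaps the roles of $p$ and $p^{*}$ in the resulting denominators.
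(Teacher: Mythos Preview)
Your proposal is correct and follows essentially the same approach as the paper: for item (i) the paper also applies a Minkowski-type inequality first (it cites Corollary \ref{minkowskiminkowski22} rather than \ref{minkowskiminkowski}, but the resulting estimate is the one you wrote) and then H\"older in $s$, while for item (ii) it applies H\"older pointwise and then integrates in $t$. Your exponent bookkeeping matches the paper's computations line by line.
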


\begin{proof}$(i)$ Let $f\in L^p(t_0,t_1;X)$. Observe that Corollary \ref{minkowskiminkowski22} ensures
\begin{multline*}\|J_{t_0,t}^\alpha f\|_{L^p(t_0,t_1;X)}\leq\dfrac{1}{\Gamma(\alpha)}\left[\int_{t_0}^{t_1}\left[\int_{t_0}^t(t-s)^{\alpha-1}\|f(s)\|_X\,ds\right]^pdt\right]^{1/p}\\
\leq\dfrac{1}{\Gamma(\alpha)}\int_{t_0}^{t_1}\left[\int_{s}^{t_1}(t-s)^{(\alpha-1)p}\,dt\right]^{1/p}\|f(s)\|_X\,ds
\\=\left\{\dfrac{1}{\Gamma(\alpha)\big[(\alpha-1)p+1\big]^{1/p}}\right\}\int_{t_0}^{t_1}(t_1-s)^{\alpha-1+(1/p)}\|f(s)\|_X\,ds.\end{multline*}

Finally, Holder's inequality gives us
$$\|J_{t_0,t}^\alpha f\|_{L^p(t_0,t_1;X)}\leq \left\{\dfrac{(t_1-t_0)^\alpha}{\Gamma(\alpha)\big[(\alpha-1)p+1\big]^{1/p}\big(\alpha p^*\big)^{1/p^*}}\right\}\|f\|_{L^p(t_0,t_1;X)}.$$
$(ii)$  Let $f\in L^p(t_0,t_1;X)$. Observe that Holder's inequality ensures
\begin{multline*}\|J_{t_0,t}^\alpha f(t)\|_{X}\leq\dfrac{1}{\Gamma(\alpha)}\int_{t_0}^t(t-s)^{\alpha-1}\|f(s)\|_X\,ds\\\leq\dfrac{1}{\Gamma(\alpha)}\left[\int_{t_0}^{t}(t-s)^{(\alpha-1)p^*}\,ds\right]^{1/p^*}\left[\int_{t_0}^{t}{\|f(s)\|_X^p}\,ds\right]^{1/p}
\\=\left\{\dfrac{(t-t_0)^{\alpha-1+(1/p^*)}}{\Gamma(\alpha)\big[(\alpha-1)p^*+1\big]^{1/p^*}}\right\}\|f\|_{L^p(t_0,t_1;X)}.\end{multline*}

But then,
$$\|J_{t_0,t}^\alpha f\|_{L^p(t_0,t_1;X)}\leq \left\{\dfrac{(t_1-t_0)^\alpha}{\Gamma(\alpha)\big[(\alpha-1)p^*+1\big]^{1/p^*}\big(\alpha p\big)^{1/p}}\right\}\|f\|_{L^p(t_0,t_1;X)}.$$
\end{proof}

\begin{remark}\label{remarkcomplementary}
  When $\alpha\geq1$ and $1< p<\infty$, it always holds that $\alpha>1/p$ and $\alpha>1/p^*$. Thus, Lemma \ref{firstlemaaux} allows us to obtain the estimate\vspace*{0.2cm}
$$N_{\alpha,p}\leq \min{\left\{\dfrac{(t_1-t_0)^\alpha}{\Gamma(\alpha)\big[(\alpha-1)p+1\big]^{1/p}\big(\alpha p^*\big)^{1/p^*}},\dfrac{(t_1-t_0)^\alpha}{\Gamma(\alpha)\big[(\alpha-1)p^*+1\big]^{1/p^*}\big(\alpha p\big)^{1/p}}\right\}}.\vspace*{0.3cm}$$
\end{remark}

It is clear that this minimum, if less than $(t_1-t_0)^\alpha/\Gamma(\alpha+1)$, is a better upper bound to $N_{\alpha,p}$. To prove our next theorem we first present the following auxiliary lemma:

\begin{lemma}\label{secondlemaaux} Consider $1<p<\infty$ and the function $\xi:(0,1]\rightarrow\mathbb{R}$ given by
\begin{equation}\label{funcaoxi}\xi(t):=\dfrac{1+(p-1)(t-1)}{t^{p-1}}.\end{equation}
Then it holds that:
\begin{itemize}
\item[(i)] If $1<p<2$, \,$\xi(t)>1,\forall t\in(0,1);$
\item[(ii)] If $p=2$, \,$\xi(t)=1,\forall t\in(0,1);$
\item[(iii)] If $p>2$, \,$\xi(t)<1,\forall t\in(0,1).$
\end{itemize}
\end{lemma}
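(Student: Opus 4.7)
The plan is to reduce the three claims to a single tangent-line comparison. Multiplying through by $t^{p-1}>0$, the inequality $\xi(t)\gtreqless 1$ is equivalent to
$$1+(p-1)(t-1)\gtreqless t^{p-1}.$$
Here the left-hand side is precisely the tangent line $L(t)$ to the function $h(t):=t^{p-1}$ at the point $t=1$ (note $h(1)=1$ and $h'(1)=p-1$). So the lemma is really a statement about how $h$ sits relative to its tangent at $1$ on the interval $(0,1)$.

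First I would dispose of the trivial case $p=2$: then $h(t)=t$, so $L(t)=h(t)$ identically, which gives $\xi(t)\equiv 1$. For the remaining cases, I would compute $h''(t)=(p-1)(p-2)t^{p-3}$ and examine its sign on $(0,\infty)$. When $1<p<2$, the factor $(p-1)(p-2)$ is negative, so $h$ is strictly concave on $(0,\infty)$; a strictly concave function lies strictly below its tangent line at any interior point except the point of tangency, hence $t^{p-1}<L(t)$ for $t\in(0,1)$, giving $\xi(t)>1$. When $p>2$, the same factor is positive, so $h$ is strictly convex and consequently $t^{p-1}>L(t)$ on $(0,1)$, yielding $\xi(t)<1$.

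There is no real obstacle here; the only thing worth being careful about is the strictness of the inequalities, which follows from the strictness of the convexity/concavity of $h$ (its second derivative never vanishes on $(0,\infty)$ when $p\neq 2$). If a self-contained tangent-line inequality is preferred in place of quoting convexity, one can instead define $\varphi(t):=t^{p-1}-L(t)$, observe $\varphi(1)=\varphi'(1)=0$, and note that $\varphi''(t)=(p-1)(p-2)t^{p-3}$ has constant sign on $(0,\infty)$; integrating this sign information twice from $t$ up to $1$ produces the desired strict inequality on $(0,1)$ directly.
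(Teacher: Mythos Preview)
Your argument is correct. The paper takes a slightly more direct route: it simply computes
\[
\xi'(t)=t^{-p}(p-1)(p-2)(1-t)
\]
and observes $\xi(1)=1$, so on $(0,1)$ the sign of $\xi'$ is that of $(p-1)(p-2)$, giving monotonicity of $\xi$ toward the value $1$ at the right endpoint. Your version first clears the denominator and reinterprets the inequality as a tangent-line comparison for $h(t)=t^{p-1}$ at $t=1$, then reads off the answer from the sign of $h''(t)=(p-1)(p-2)t^{p-3}$. Both proofs ultimately hinge on the sign of $(p-1)(p-2)$; the paper's single differentiation of $\xi$ is marginally shorter, while your reformulation has the advantage of a clean geometric picture (concave/convex graph versus its tangent) and makes the strictness transparent without any endpoint analysis.
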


\begin{proof}
Just observe that $\xi^\prime(t)=t^{-p}\big(p-1\big)\big(p-2\big)\big(1-t\big)$, and $\xi(1)=1$.

\end{proof}

\begin{theorem} Assume that $\alpha\geq1$ and $1< p<\infty$.
\begin{itemize}
\item[(i)] If $1<p\leq2$ then \vspace*{0.2cm}
\begin{equation}\label{estimativefinall01}N_{\alpha,p}\leq\dfrac{(t_1-t_0)^\alpha}{\Gamma(\alpha)\big[(\alpha-1)p+1\big]^{1/p}\big(\alpha p^*\big)^{1/p^*}}<\dfrac{(t_1-t_0)^\alpha}{\Gamma(\alpha+1)}.\vspace*{0.3cm}\end{equation}
\item[(ii)] For $p>2$ we have\vspace*{0.2cm}
\begin{equation}\label{estimativefinall02}N_{\alpha,p}\leq\dfrac{(t_1-t_0)^\alpha}{\Gamma(\alpha)\big[(\alpha-1)p^*+1\big]^{1/p^*}\big(\alpha p\big)^{1/p}}<\dfrac{(t_1-t_0)^\alpha}{\Gamma(\alpha+1)}.\vspace*{0.3cm}\end{equation}
\end{itemize}
\end{theorem}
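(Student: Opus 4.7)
The plan is to apply Lemma \ref{firstlemaaux} for the non-strict estimates and then reduce the strict comparison with $(t_1-t_0)^\alpha/\Gamma(\alpha+1)$ to a one-point evaluation of the function $\xi$ from Lemma \ref{secondlemaaux}. For case (i), since $\alpha\geq 1>1/p^*$ whenever $p>1$, Lemma \ref{firstlemaaux}(i) immediately gives the first inequality in \eqref{estimativefinall01}; Lemma \ref{firstlemaaux}(ii) does the same for \eqref{estimativefinall02}. So everything boils down to proving the two strict inequalities
$$\bigl[(\alpha-1)p+1\bigr]^{1/p}(\alpha p^*)^{1/p^*}>\alpha\qquad\text{and}\qquad \bigl[(\alpha-1)p^*+1\bigr]^{1/p^*}(\alpha p)^{1/p}>\alpha,$$
one for each case.

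I will handle the first (case (i)); the second follows by interchanging $p$ and $p^*$, and $p>2$ forces $p^*\in(1,2)$, which is exactly the range where Lemma \ref{secondlemaaux}(i) applies. Raising to the $p$-th power and using $(p^*)^{p-1}=p^{p-1}/(p-1)^{p-1}$, the first inequality becomes
$$\frac{(\alpha-1)p+1}{\alpha}\,>\,\left(\frac{p-1}{p}\right)^{p-1}.$$
The left-hand side equals $p-(p-1)/\alpha\geq p-(p-1)=1$ for every $\alpha\geq 1$, so it suffices to show that the right-hand side is strictly less than $1$. Plugging $t=(p-1)/p\in(0,1)$ into $\xi$ gives, after direct computation, $\xi((p-1)/p)=p^{p-2}/(p-1)^{p-1}$. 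For $1<p<2$, Lemma \ref{secondlemaaux}(i) yields $\xi((p-1)/p)>1$, equivalently $((p-1)/p)^{p-1}<1/p<1$; for $p=2$ the value is simply $1/2<1$. In both subcases the desired strict bound holds.

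Case (ii) is obtained by repeating the same recipe with $p$ replaced by $p^*$: the reduction now produces $\frac{(\alpha-1)p^*+1}{\alpha}>((p^*-1)/p^*)^{p^*-1}$, and since $p>2\Leftrightarrow p^*\in(1,2)$, Lemma \ref{secondlemaaux}(i) supplies $((p^*-1)/p^*)^{p^*-1}<1/p^*<1$, closing the argument. The only real point of care is the boundary $p=2$ (where $p=p^*$ and the two upper bounds coincide); there the refined $\xi$-estimate degenerates to an equality, but the trivial observation $((p-1)/p)^{p-1}<1$ still delivers strictness. I do not anticipate a conceptual obstacle—the chief bookkeeping task is just verifying that the algebraic reduction is correctly set up and that Lemma \ref{secondlemaaux} is invoked with a parameter in its proper range.
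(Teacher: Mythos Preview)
Your argument is correct, but it follows a somewhat different (and more direct) path than the paper's.

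For the first inequality in each case, the paper does more than you do: it invokes Lemma~\ref{secondlemaaux} with $t=1-1/(\alpha p)\in(0,1)$ to compare the two bounds of Lemma~\ref{firstlemaaux} and show that, for $1<p\le 2$, the bound from part~(i) is the smaller one (and the reverse for $p>2$), thereby identifying the minimum in Remark~\ref{remarkcomplementary}. You bypass this comparison entirely and simply quote the relevant half of Lemma~\ref{firstlemaaux}, which is enough for the theorem as stated. For the strict inequality, the paper asserts $\alpha\ge 1>\eta(p)$ (with $\eta$ as in \eqref{funcimpo}) and deduces $[(\alpha-1)p+1]^{1/p}(\alpha p^*)^{1/p^*}>\alpha$; your algebraic reduction to $\frac{(\alpha-1)p+1}{\alpha}\ge 1>\big((p-1)/p\big)^{p-1}$ is cleaner and fully self-contained.

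One remark: your appeal to Lemma~\ref{secondlemaaux} at $t=(p-1)/p$ is unnecessary. You only need $\big((p-1)/p\big)^{p-1}<1$, which is immediate since $(p-1)/p\in(0,1)$ and $p-1>0$; the lemma gives the sharper bound $<1/p$, but that extra precision plays no role. Likewise in case~(ii) you can drop the lemma altogether. The trade-off is that the paper's route records the additional fact that the stated bound is actually the minimum of the two available estimates, whereas your route is shorter but does not make that comparison explicit.
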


\begin{proof} $(i)$

Observe that inequality
\begin{equation}\label{secondlemaauxaux121}\dfrac{(t_1-t_0)^\alpha}{\Gamma(\alpha)\big[(\alpha-1)p+1\big]^{1/p}\big(\alpha p^*\big)^{1/p^*}}\leq\dfrac{(t_1-t_0)^\alpha}{\Gamma(\alpha)\big[(\alpha-1)p^*+1\big]^{1/p^*}\big(\alpha p\big)^{1/p}}\end{equation}
is equivalent to
\begin{equation}\label{secondlemaauxaux}1\leq\dfrac{1-(p-1)[1/(\alpha p)]}{\big\{1-[1/(\alpha p)]\big\}^{p-1}}.\end{equation}

Since $1-\big[1/(\alpha p)\big]\in(0,1)$, \eqref{secondlemaauxaux} is equivalent to
$$1\leq\xi\Big(1-\big[1/(\alpha p)\big]\Big),$$
where $\xi(t)$ is defined in \eqref{funcaoxi}. Since $1<p\leq2$, Lemma \ref{secondlemaaux} guarantees that inequality \eqref{secondlemaauxaux} holds, what implies that the first part of inequality \eqref{estimativefinall01} holds. It is also important to emphasize that the equality in \eqref{secondlemaauxaux}, and consequently in \eqref{secondlemaauxaux121}, occurs only when $p=2$.

To verify the second part of inequality \eqref{estimativefinall01}, notice that
$$\alpha\geq1>\dfrac{p^{p-1}(p-1)}{p^p-(p-1)^{p-1}}\Longrightarrow \big[(\alpha-1)p+1\big]^{1/p}(\alpha p^*)^{1/p^*}>\alpha,$$
what ensures the inequality
$$\dfrac{(t_1-t_0)^\alpha}{\Gamma(\alpha)\big[(\alpha-1)p+1\big]^{1/p}\big(\alpha p^*\big)^{1/p^*}}<\dfrac{(t_1-t_0)^\alpha}{\Gamma(\alpha+1)}.\vspace*{0.2cm}$$

$(ii)$
Like before, since inequality
$$\dfrac{(t_1-t_0)^\alpha}{\Gamma(\alpha)\big[(\alpha-1)p+1\big]^{1/p}\big(\alpha p^*\big)^{1/p^*}}>\dfrac{(t_1-t_0)^\alpha}{\Gamma(\alpha)\big[(\alpha-1)p^*+1\big]^{1/p^*}\big(\alpha p\big)^{1/p}}$$
is equivalent to
\begin{equation}\label{secondlemaauxaux2}1>\dfrac{1-(p-1)[1/(\alpha p)]}{\big\{1-[1/(\alpha p)]\big\}^{p-1}},\end{equation}
with $1-\big[1/(\alpha p)\big]\in(0,1)$ and $p>2$, Lemma \ref{secondlemaaux} guarantees that inequality \eqref{secondlemaauxaux2} holds, what implies that the first part of inequality \eqref{estimativefinall02} holds.

Now observe that
$$\alpha\geq1>\dfrac{p^{1/(p-1)}}{p^{p/(p-1)}-(p-1)}\Longrightarrow \big[(\alpha-1)p^*+1\big]^{1/p^*}(\alpha p)^{1/p}>\alpha,$$
what ensures that the second part of inequality \eqref{estimativefinall02} holds.

\end{proof}

Let us now discuss the case $0<\alpha<1$, which demands more attention. To begin, consider functions $\eta,\theta:(1,\infty)\rightarrow(0,\infty)$ given by
\begin{equation}\label{funcimpo}\eta(t)=\dfrac{t^{t-1}(t-1)}{t^t-(t-1)^{t-1}}\qquad\textrm{and}\qquad\theta(t)=\dfrac{t^{1/(t-1)}}{t^{t/(t-1)}-(t-1)}.\end{equation}

Since $\eta(t)$ is a continuous increasing function that satisfies
$$\lim_{t\rightarrow1^+}\eta(t)=0\quad\textrm{and}\quad\lim_{t\rightarrow\infty}\eta(t)=1,$$
while $\theta(t)$ is a continuous decreasing function that satisfies
$$\lim_{t\rightarrow1^+}\theta(t)=1\quad\textrm{and}\quad\lim_{t\rightarrow\infty}\theta(t)=0,$$
we may consider $p_{1,\alpha}$ and $p_{2,\alpha}$ the respective unique solutions in $(1,\infty)$ of the equations
\begin{equation}\label{impoconst}\alpha=\eta(p_{1,\alpha})\qquad\textrm{and}\qquad \alpha=\theta(p_{2,\alpha}).\end{equation}

The above considerations are enough for us to present our next result.

\begin{theorem} Consider $0< \alpha<1$ and $1< p<\infty$.
\begin{itemize}
\item[(i)]  If $\alpha>1/p^*$, then we have that\vspace*{0.2cm}
\begin{equation}\label{estimativefinal16a}N_{\alpha,p}\leq\dfrac{(t_1-t_0)^\alpha}{\Gamma(\alpha)\big[(\alpha-1)p+1\big]^{1/p}\big(\alpha p^*\big)^{1/p^*}}.\vspace*{0.3cm}\end{equation}
Moreover, if $p<p_{1,\alpha}$, where $p_{1,\alpha}$ is given in \eqref{impoconst}, we have that
\begin{equation}\label{estimativefinal16b}\dfrac{(t_1-t_0)^\alpha}{\Gamma(\alpha)\big[(\alpha-1)p+1\big]^{1/p}\big(\alpha p^*\big)^{1/p^*}}<\dfrac{(t_1-t_0)^\alpha}{\Gamma(\alpha+1)}.\vspace*{0.3cm}\end{equation}
Otherwise, i.e., when $p\geq p_{1,\alpha}$, we cannot improve the boundeness given by Theorem \ref{maxfunctions2} with Lemma \ref{firstlemaaux}.\vspace*{0.3cm}
\item[(ii)]  If $\alpha>1/p$, then we have that\vspace*{0.2cm}
\begin{equation}\label{estimativefinal17a}N_{\alpha,p}\leq\dfrac{(t_1-t_0)^\alpha}{\Gamma(\alpha)\big[(\alpha-1)p^*+1\big]^{1/p^*}\big(\alpha p\big)^{1/p}}.\vspace*{0.3cm}\end{equation}
Moreover, if $p>p_{2,\alpha}$, where $p_{2,\alpha}$ is given in \eqref{impoconst}, we have that
\begin{equation}\label{estimativefinal17b}\dfrac{(t_1-t_0)^\alpha}{\Gamma(\alpha)\big[(\alpha-1)p^*+1\big]^{1/p^*}\big(\alpha p\big)^{1/p}}<\dfrac{(t_1-t_0)^\alpha}{\Gamma(\alpha+1)}.\vspace*{0.3cm}\end{equation}
Otherwise, i.e., when $p\leq p_{2,\alpha}$, we cannot improve the boundeness given by Theorem \ref{maxfunctions2} with Lemma \ref{firstlemaaux}.\vspace*{0.3cm}
\end{itemize}
\end{theorem}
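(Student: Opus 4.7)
The plan is to prove each item by directly invoking the corresponding part of Lemma \ref{firstlemaaux} and then reducing the comparison with $(t_1-t_0)^\alpha/\Gamma(\alpha+1)$ to a one-variable inequality involving the functions $\eta$ and $\theta$ introduced in \eqref{funcimpo}. For item $(i)$, the bound \eqref{estimativefinal16a} is immediate from Lemma \ref{firstlemaaux}$(i)$ once the hypothesis $\alpha>1/p^*$ is noted. Symmetrically, for item $(ii)$, \eqref{estimativefinal17a} follows at once from Lemma \ref{firstlemaaux}$(ii)$ under $\alpha>1/p$. Thus the real content of the theorem lies in deciding when these bounds actually improve upon $(t_1-t_0)^\alpha/\Gamma(\alpha+1)$.

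For the improvement in \eqref{estimativefinal16b}, I would reduce the desired inequality $[(\alpha-1)p+1]^{1/p}(\alpha p^*)^{1/p^*}>\alpha$ to algebraic form. Raising both sides to the $p$-th power and using the identity $p/p^*=p-1$, together with the substitution $u=\alpha p$, this inequality becomes
$$[u-(p-1)]\,u^{p-1}(p-1)^{-(p-1)}>u^p/p^p,$$
which after dividing by $u^{p-1}$ and rearranging collapses to
$$\alpha>\dfrac{(p-1)p^{p-1}}{p^p-(p-1)^{p-1}}=\eta(p).$$
Since $\eta$ is continuous and strictly increasing on $(1,\infty)$ with $\eta(p_{1,\alpha})=\alpha$, the condition $\alpha>\eta(p)$ is exactly $p<p_{1,\alpha}$, which proves \eqref{estimativefinal16b}. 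The same calculation, when $p\geq p_{1,\alpha}$, shows the reversed inequality, which means the constant from Lemma \ref{firstlemaaux}$(i)$ is at least $(t_1-t_0)^\alpha/\Gamma(\alpha+1)$ and therefore offers no gain over the bound provided by Theorem \ref{maxfunctions2}.

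For item $(ii)$, I would carry out the completely analogous computation: raise $[(\alpha-1)p^*+1]^{1/p^*}(\alpha p)^{1/p}>\alpha$ to the $p^*$-th power, use $p^*/p=1/(p-1)$, and reduce the inequality to $\alpha>\eta(p^*)$. The key observation here is the identity $\eta(p^*)=\theta(p)$, which can be verified by substituting $t=p/(p-1)$ into the definition of $\eta$ and simplifying; equivalently, one can check it holds on a dense set and invoke continuity. Once this is established, the strict monotonicity of $\theta$ (decreasing) together with $\theta(p_{2,\alpha})=\alpha$ shows that $\alpha>\theta(p)$ is equivalent to $p>p_{2,\alpha}$, giving \eqref{estimativefinal17b}. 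The dual case $p\leq p_{2,\alpha}$ is handled exactly as in part $(i)$.

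The main obstacle I anticipate is the algebraic identification $\eta(p^*)=\theta(p)$, since the other steps are routine consequences of Lemma \ref{firstlemaaux} and the monotonicity properties of $\eta,\theta$ already noted in the paper. Once this identity is recorded, both items follow from parallel computations, and the ``otherwise'' claims are a direct reading of the reversed form of the reduction.
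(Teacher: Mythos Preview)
Your proof is correct and follows essentially the same approach as the paper: both invoke Lemma~\ref{firstlemaaux} directly for \eqref{estimativefinal16a} and \eqref{estimativefinal17a}, and then reduce the comparison with $(t_1-t_0)^\alpha/\Gamma(\alpha+1)$ to the inequality $\alpha>\eta(p)$ (resp.\ $\alpha>\theta(p)$), concluding via the monotonicity of $\eta$ and $\theta$. Your detour in item~(ii) through the identity $\eta(p^*)=\theta(p)$ is a pleasant way to recycle the algebra from item~(i); the paper instead performs the reduction to $\theta(p)$ directly, but the substance is identical.
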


\begin{proof}
When $\alpha>1/p^*$, Lemma \ref{firstlemaaux} ensures that inequality \eqref{estimativefinal16a} holds, and when $\alpha>1/p$ it guarantees that inequality \eqref{estimativefinal17a} holds.

Now, to prove the second part of item $(i)$, recall that $\eta(t)$, given in \eqref{funcimpo}, is an increasing function and that $p<p_{1,\alpha}$. Therefore, we have that
$$\alpha=\eta(p_{1,\alpha})>\eta(p)=\dfrac{p^{p-1}(p-1)}{p^p-(p-1)^{p-1}}\Longrightarrow \big[(\alpha-1)p+1\big]^{1/p}(\alpha p^*)^{1/p^*}>\alpha,$$
i.e., inequality \eqref{estimativefinal16b} holds.

Finally, to prove the second part of item $(ii)$, recall that $\theta(t)$, given in \eqref{funcimpo}, is a decreasing function and that $p>p_{2,\alpha}$. Therefore, we have that
$$\alpha=\theta(p_{2,\alpha})>\theta(p)=\dfrac{p^{1/(p-1)}}{p^{p/(p-1)}-(p-1)}\Longrightarrow \big[(\alpha-1)p^*+1\big]^{1/p^*}(\alpha p)^{1/p}>\alpha,$$
i.e., inequality \eqref{estimativefinal17b} holds\end{proof}

To end this section, let us discuss the existence of a lower bound to the constant $N_{\alpha,p}$, when $1<p<\infty$. We follow one of the ideas presented in the proof of Theorem \ref{primeiroultimo}.

Let $x\in X$, $\beta\in[0,1/p)$ and $\phi_\beta:[t_0,t_1]\rightarrow X$ be given by
$$\phi_\beta(t)=(t-t_0)^{-\beta}x.$$
Since we have
\begin{multline*}\|\phi_\beta\|_{L^{p}(t_0,t_1;X)}=\dfrac{(t_1-t_0)^{(1/p)-\beta}\|x\|_X}{(1-\beta p)^{1/p}}\quad\textrm{and}
\quad\\\|J_{t_0,t}^\alpha\phi_\beta\|_{L^{p}(t_0,t_1;X)}=\dfrac{\Gamma(1-\beta)(t_1-t_0)^{\alpha+(1/p)-\beta}\|x\|_X}{\Gamma(\alpha+1-\beta)[(\alpha-\beta)p+1]^{1/p}},\end{multline*}
we deduce by inequality \ref{naturalinclusion3} that
\begin{equation}\label{finalcomp}\hspace*{0.2cm}\dfrac{(t_1-t_0)^\alpha\,\Gamma(1-\beta)(1-\beta p)^{1/p}}{\Gamma(\alpha+1-\beta)[(\alpha-\beta)p+1]^{1/p}}=\dfrac{\|J^\alpha_{t_0,t}\phi_\beta\|_{L^{p}(t_0,t_1;X)}}{\|\phi_\beta\|_{L^{p}(t_0,t_1;X)}}\leq N_{\alpha,p},\end{equation}
for any $\beta\in[0,1/p)$.

By considering function $\zeta:[0,1/p)\rightarrow\mathbb{R}$ given by\vspace*{0.2cm}
$$\zeta(t)=\dfrac{\Gamma(1-t)(1-t p)^{1/p}}{\Gamma(\alpha+1-t)[(\alpha-t)p+1]^{1/p}},\vspace*{0.2cm}$$
we may check numerically that there exists $\beta_p\in [0,1/p)$ that is the maximum value of this function. Analytically it is complicated to determine $\beta_p$ because it would be the unique solution of the equation\vspace*{0.2cm}
$$g(t)\Big\{\alpha p+(1-tp)\big[(\alpha-t)p+1\big]\Big[\psi(1-t)-\psi(\alpha+1-t)\Big]\Big\}=\zeta^\prime(t)=0,\vspace*{0.2cm}$$
where $\psi(t)$ is the Digamma function and $g:[0,1/p)\rightarrow\mathbb{R}$ is given by\vspace*{0.2cm}
$$g(t)=-\left[\dfrac{\Gamma(1-t)(1-tp)^{(1/p)-1}}{\Gamma(\alpha+1-t)\big[(\alpha-t)p+1\big]^{(1/p)+1}}\right].\vspace*{0.2cm}$$

Therefore, we choose $\beta=0$ in \eqref{finalcomp} as a lower (computable) bound to $N_{\alpha,p}$. This discussion can be described by the following result.

\begin{theorem} Let $\alpha>0$ and $1<p<\infty$. Then \vspace*{0.2cm}
$$\dfrac{(t_1-t_0)^\alpha}{\Gamma(\alpha+1)[\alpha p+1]^{1/p}}\leq N_{\alpha,p}.$$
\end{theorem}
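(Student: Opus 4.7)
The plan is to instantiate the general inequality derived in the paragraph immediately preceding the theorem, namely \eqref{finalcomp}, at the specific value $\beta = 0$. The pre-theorem discussion already exhibits a one-parameter family of test functions $\phi_\beta(t) = (t-t_0)^{-\beta}x$ (with $x \in X$, $\|x\|_X = 1$, and $\beta \in [0,1/p)$) for which the ratio $\|J_{t_0,t}^\alpha \phi_\beta\|_{L^p}/\|\phi_\beta\|_{L^p}$ has a closed form, and the author explains why optimizing over $\beta$ leads to a transcendental equation involving the Digamma function. The theorem takes the pragmatic route of keeping $\beta=0$.

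First I would fix $x \in X$ with $\|x\|_X = 1$ and take the constant function $\phi_0(t) \equiv x$, which is precisely $\phi_\beta$ at $\beta=0$. A direct computation gives $\|\phi_0\|_{L^p(t_0,t_1;X)} = (t_1-t_0)^{1/p}$ and, by evaluating the fractional integral of a constant,
$$J_{t_0,t}^\alpha \phi_0(t) = \frac{(t-t_0)^\alpha}{\Gamma(\alpha+1)}\, x,$$
so that
$$\|J_{t_0,t}^\alpha \phi_0\|_{L^p(t_0,t_1;X)} = \frac{1}{\Gamma(\alpha+1)}\left[\int_{t_0}^{t_1}(t-t_0)^{\alpha p}\, dt\right]^{1/p} = \frac{(t_1-t_0)^{\alpha+1/p}}{\Gamma(\alpha+1)\,[\alpha p + 1]^{1/p}}.$$

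Finally, by the supremum characterization of $N_{\alpha,p}$ in \eqref{nalphapsup}, the ratio of these norms is a lower bound for $N_{\alpha,p}$:
$$\frac{(t_1-t_0)^\alpha}{\Gamma(\alpha+1)\,[\alpha p + 1]^{1/p}} = \frac{\|J_{t_0,t}^\alpha \phi_0\|_{L^p(t_0,t_1;X)}}{\|\phi_0\|_{L^p(t_0,t_1;X)}} \leq N_{\alpha,p},$$
which is exactly the claim. Equivalently, one may read off this inequality from \eqref{finalcomp} by setting $\beta=0$ (since $\Gamma(1)=1$ and $(1-0\cdot p)^{1/p}=1$).

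There is essentially no obstacle here: the proof is a single substitution into an already-established identity. The only mild care needed is to verify the two norm computations and to note that $\phi_0 \in L^p(t_0,t_1;X)$ is nonzero, so it is eligible as a test function in the supremum defining $N_{\alpha,p}$. The genuinely hard problem -- finding the optimal $\beta \in [0,1/p)$, or better yet identifying a true maximizer in $L^p$ -- is left open, as indicated in the surrounding discussion.
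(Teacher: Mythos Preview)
Your proposal is correct and follows essentially the same approach as the paper: both select the test function $\phi_0(t)=x$ (i.e., the case $\beta=0$ of the family $\phi_\beta$), compute the two $L^p$ norms explicitly, and invoke the supremum characterization \eqref{nalphapsup} to obtain the stated lower bound. The paper phrases this as specializing \eqref{finalcomp} at $\beta=0$, which you also note.
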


\section{Compilation of some classical results}\label{sec5}

Let us summarize some well-known results that are used throughout this manuscript. Besides, since we use several classical theorems adapted to measurable vector-valued functions, we find useful to state and make bibliographical references to these theorems in order to keep this text self contained.

The first one is the classical Riesz-Thorin interpolation of real functions (see \cite[Theorem 6.27]{Fol01} for details on the proof).

\begin{theorem}[Riesz-Thorin Interpolation]\label{riezthorin} Consider $1\leq p_\theta,p_1,p_2,q_\theta,q_1,q_2\leq\infty$ and $\theta\in(0,1)$ satisfying the identities
$$\dfrac{1}{p_\theta}=\dfrac{(1-\theta)}{p_1}+\dfrac{\theta}{p_2}\qquad\textrm{and}\qquad\dfrac{1}{q_\theta}=\dfrac{(1-\theta)}{q_1}+\dfrac{\theta}{q_2}.$$
If $T$ is a linear operator that maps the space $L^{p_1}(t_0,t_1;\mathbb{R})+L^{p_2}(t_0,t_1;\mathbb{R})$ into the space $L^{q_1}(t_0,t_1;\mathbb{R})+L^{q_2}(t_0,t_1;\mathbb{R})$ and satisfies
$$\|Tf\|_{L^{q_1}(t_0,t_1;\mathbb{R})}\leq M_1\|f\|_{L^{p_1}(t_0,t_1;\mathbb{R})},$$
for any $f\in L^{p_1}(t_0,t_1;\mathbb{R})$ and
$$\|Tf\|_{L^{q_2}(t_0,t_1;\mathbb{R})}\leq M_2\|f\|_{L^{p_2}(t_0,t_1;\mathbb{R})},$$
for any $f\in L^{p_2}(t_0,t_1;\mathbb{R})$, then $T$ is a linear operator that maps $L^{p_\theta}(t_0,t_1;\mathbb{R})$ into $L^{q_\theta}(t_0,t_1;\mathbb{R})$ and satisfies
$$\|Tf\|_{L^{q_\theta}(t_0,t_1;\mathbb{R})}\leq M_1^{1-\theta}M_2^\theta\|f\|_{L^{p_\theta}(t_0,t_1;\mathbb{R})},$$
for any $f\in L^{p_\theta}(t_0,t_1;\mathbb{R})$.
\end{theorem}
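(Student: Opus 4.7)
The plan is to apply Hadamard's three lines theorem after a complex-analytic interpolation construction, which is the standard route to Riesz-Thorin. By density of simple functions in $L^{p_\theta}(t_0,t_1;\mathbb{R})$ and by duality
$$\|h\|_{L^{q_\theta}(t_0,t_1;\mathbb{R})} = \sup\Bigl\{\Bigl|\textstyle\int_{t_0}^{t_1} hg\,dt\Bigr| : g \text{ simple},\ \|g\|_{L^{q_\theta^*}(t_0,t_1;\mathbb{R})}\leq 1\Bigr\},$$
where $q_\theta^*$ is the H\"older conjugate of $q_\theta$, it suffices to establish
$$\Bigl|\textstyle\int_{t_0}^{t_1}(Tf)(t)\,g(t)\,dt\Bigr|\leq M_1^{1-\theta}M_2^\theta$$
for every pair of simple functions $f,g$ with $\|f\|_{L^{p_\theta}}=\|g\|_{L^{q_\theta^*}}=1$.

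Write $f=\sum_j|a_j|e^{i\alpha_j}\chi_{E_j}$ and $g=\sum_k|b_k|e^{i\beta_k}\chi_{F_k}$ on disjoint measurable sets with $a_j,b_k\neq 0$. I introduce the analytic exponents
$$\frac{1}{p(z)}=\frac{1-z}{p_1}+\frac{z}{p_2},\qquad \frac{1}{q^*(z)}=\frac{1-z}{q_1^*}+\frac{z}{q_2^*},\qquad z\in\overline{S}:=\{z\in\mathbb{C} : 0\leq\Re z\leq1\},$$
so that $p(\theta)=p_\theta$ and $q^*(\theta)=q_\theta^*$, together with the analytic family of simple functions
$$f_z=\sum_j|a_j|^{p_\theta/p(z)}e^{i\alpha_j}\chi_{E_j},\qquad g_z=\sum_k|b_k|^{q_\theta^*/q^*(z)}e^{i\beta_k}\chi_{F_k},$$
and the scalar $F(z):=\int_{t_0}^{t_1}(Tf_z)(t)\,g_z(t)\,dt$. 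Linearity of $T$ expresses $F(z)$ as a finite sum of entire exponential functions of $z$ times the fixed scalars $\int(T\chi_{E_j})\chi_{F_k}\,dt$, hence $F$ is entire, continuous, and uniformly bounded on $\overline{S}$.

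The heart of the argument is the boundary computation. On $\Re z=0$, the pointwise identity $|f_{it}(x)|=|f(x)|^{p_\theta/p_1}$ gives $\|f_{it}\|_{L^{p_1}}^{p_1}=\|f\|_{L^{p_\theta}}^{p_\theta}=1$, and likewise $\|g_{it}\|_{L^{q_1^*}}=1$; H\"older's inequality combined with $\|T\|_{L^{p_1}\to L^{q_1}}\leq M_1$ then yields $|F(it)|\leq M_1$. The symmetric estimate on $\Re z=1$ gives $|F(1+it)|\leq M_2$. Hadamard's three lines theorem therefore produces $|F(\theta)|\leq M_1^{1-\theta}M_2^\theta$, and since $f_\theta=f$ and $g_\theta=g$ this is precisely the desired inequality. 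The main delicate point will be the degenerate situation where some $p_i$ or $q_i$ equals $\infty$, in which case the ratio $p_\theta/p(z)$ or $q_\theta^*/q^*(z)$ must be reinterpreted through the explicit affine expressions $p_\theta[(1-z)/p_1+z/p_2]$, with the convention $|a_j|^0=1$, and the pairings on the boundary replaced by the $L^\infty$–$L^1$ duality; the interpolation mechanism itself is unaffected, so the conclusion persists after this routine adjustment.
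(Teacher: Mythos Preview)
Your three-lines argument is correct and is the standard proof of Riesz--Thorin. The paper does not prove this result itself but simply cites \cite[Theorem~6.27]{Fol01}, where precisely this Hadamard three-lines construction is carried out, so your proposal aligns with the intended reference.
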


We also recall Minkowski's inequality to integrals, which is a classical result that can be found in \cite[Theorem 202]{HaLiPo1}.

\begin{theorem}[Minkowski's Inequality to Integrals]\label{minkowski} Let $f:[t_0,t_1]\times[t_0',t_1']\rightarrow\mathbb{R}$ be a Lebesgue measurable function and assume that $1\leq p<\infty$. Then
$$\left[\int_{t_0'}^{t_1'}{\left|\int_{t_0}^{t_1}{f(s,r)}\,ds\right|^p}\,dr\right]^{1/p}
\leq\int_{t_0}^{t_1}{\left[\int_{t_0'}^{t_1'}{\big|f(s,r)\big|^p}\,dr\right]^{1/p}}\,ds.$$
\end{theorem}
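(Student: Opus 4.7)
The plan is to reduce immediately to the case $f \geq 0$, since replacing $f$ by $|f|$ leaves the right-hand side unchanged and does not decrease the left-hand side. The case $p=1$ follows directly from Tonelli's theorem, which in fact yields equality, so I will concentrate on $1 < p < \infty$. The standard strategy here is the duality-via-H\"older trick.

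Let $q = p/(p-1)$ be the conjugate exponent, and set $F(r) := \int_{t_0}^{t_1} f(s,r) \, ds$ (assuming $f \geq 0$). If the right-hand side of the claimed inequality is infinite there is nothing to prove, so assume it is finite; Tonelli then ensures $F$ is measurable. I next write
\begin{align*}
\int_{t_0'}^{t_1'} F(r)^p \, dr
&= \int_{t_0'}^{t_1'} F(r)^{p-1} \left[\int_{t_0}^{t_1} f(s,r) \, ds\right] dr \\
&= \int_{t_0}^{t_1} \left[\int_{t_0'}^{t_1'} F(r)^{p-1} f(s,r) \, dr\right] ds,
\end{align*}
where the interchange of the order of integration is legal by Tonelli since the integrand is nonnegative. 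Applying H\"older's inequality to the inner integral with exponents $q$ and $p$, and using $(p-1)q = p$, I obtain
$$\int_{t_0'}^{t_1'} F(r)^{p-1} f(s,r) \, dr \leq \left[\int_{t_0'}^{t_1'} F(r)^{p} \, dr\right]^{1/q} \left[\int_{t_0'}^{t_1'} f(s,r)^p \, dr\right]^{1/p}.$$
Substituting this back and dividing both sides by $\left[\int F^p\right]^{1/q}$ produces the stated Minkowski inequality after recognizing $1 - 1/q = 1/p$.

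The main obstacle is the final division step, which requires $\|F\|_p$ to be finite and nonzero. The case $\|F\|_p = 0$ is trivial. To handle the possibility that $\|F\|_p = \infty$ \emph{a priori}, the standard workaround is a truncation argument: replace $F(r)^{p-1}$ by $\min\{F(r), N\}^{p-1}$ (or integrate over $\{r : F(r) \leq N\}$), carry out the same H\"older computation, rearrange to get a bound on $\int \min\{F,N\}^p \, dr$ in terms of the finite right-hand side uniformly in $N$, and then invoke monotone convergence as $N \to \infty$. Once the nonnegative case is settled, the general (signed or complex) measurable case follows by applying it to $|f|$ and using $\left|\int f(s,r)\,ds\right| \leq \int |f(s,r)|\,ds$ pointwise.
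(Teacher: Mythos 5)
Your proof is correct. Note that the paper itself offers no proof of this statement: Theorem \ref{minkowski} appears in the final section as a compilation of classical results and is simply attributed to Hardy--Littlewood--P\'olya \cite[Theorem 202]{HaLiPo1}. Your duality-via-H\"older argument is the standard one (and essentially the one in that reference); the only delicate point is the division by $\left[\int F^p\right]^{1/q}$, and your truncation $F_N=\min\{F,N\}$ handles it cleanly, since on the bounded interval $[t_0',t_1']$ the truncated function is automatically in $L^p$, so the rearranged inequality $\left[\int F_N^p\,dr\right]^{1/p}\leq \int_{t_0}^{t_1}\left[\int_{t_0'}^{t_1'}f(s,r)^p\,dr\right]^{1/p}ds$ holds uniformly in $N$ and monotone convergence finishes the argument. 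No gaps.
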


Since we recursively apply two specific consequences of Minkowski's inequality to integrals, which are not in the form presented above, we state them bellow.

\begin{corollary}\label{minkowskiminkowski22} If $f:[t_0,t_1]\rightarrow X$ is a Bochner integrable function, $g:\mathbb{R}\rightarrow[0,\infty)$ is a locally Lebesgue integrable function and $1\leq p<\infty$, then
\begin{equation*}\left[\int_{t_0}^{t_1}{\left[\int_{t_0}^{r}{g(r-s)\left\|f(s)\right\|_X}\,ds\right]^p}\,dr\right]^{1/p}\leq
\int_{t_0}^{t_1}{\left[\int_{s}^{t_1}{\big[g(r-s)\big]^p}\,dr\right]^{1/p}}\|f(s)\|_X\,ds.\end{equation*}
\end{corollary}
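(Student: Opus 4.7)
The plan is to reduce Corollary \ref{minkowskiminkowski22} to Theorem \ref{minkowski} by recasting the triangular domain of integration as a rectangular one through an indicator function. The only minor obstacle is bookkeeping around the variable upper limit $r$ in the inner integral, and verifying joint measurability so that Theorem \ref{minkowski} applies cleanly.

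First, I would introduce the auxiliary function $h:[t_0,t_1]\times[t_0,t_1]\to[0,\infty)$ defined by
$$h(s,r)=g(r-s)\,\|f(s)\|_X\,\chi_{\{t_0\leq s\leq r\leq t_1\}}(s,r).$$
Since $f$ is Bochner measurable, $\|f(\cdot)\|_X$ is Lebesgue measurable; since $g$ is locally Lebesgue measurable, the composition $(s,r)\mapsto g(r-s)$ is Lebesgue measurable on $[t_0,t_1]^2$; finally, the indicator of the triangle $\{s\leq r\}$ is measurable. Hence $h$ is a nonnegative Lebesgue measurable function on $[t_0,t_1]\times[t_0,t_1]$.

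Next, the key rewriting: for every $r\in[t_0,t_1]$ one has
$$\int_{t_0}^{r}g(r-s)\,\|f(s)\|_X\,ds=\int_{t_0}^{t_1}h(s,r)\,ds,$$
and for every $s\in[t_0,t_1]$ one has
$$\int_{t_0}^{t_1}\bigl[h(s,r)\bigr]^{p}\,dr=\|f(s)\|_X^{\,p}\int_{s}^{t_1}\bigl[g(r-s)\bigr]^{p}\,dr.$$
Applying Theorem \ref{minkowski} to $h$ (which is legitimate since $h\geq 0$ and $1\leq p<\infty$) yields
$$\left[\int_{t_0}^{t_1}\left|\int_{t_0}^{t_1}h(s,r)\,ds\right|^{p}dr\right]^{1/p}\leq\int_{t_0}^{t_1}\left[\int_{t_0}^{t_1}\bigl[h(s,r)\bigr]^{p}dr\right]^{1/p}ds.$$

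Substituting the two identities above into this inequality gives precisely
$$\left[\int_{t_0}^{t_1}\left[\int_{t_0}^{r}g(r-s)\|f(s)\|_X\,ds\right]^{p}dr\right]^{1/p}\leq\int_{t_0}^{t_1}\left[\int_{s}^{t_1}\bigl[g(r-s)\bigr]^{p}dr\right]^{1/p}\|f(s)\|_X\,ds,$$
which is the claimed estimate. No deep obstacle arises; the whole argument is a packaging of the triangular region into a rectangular one so that the classical Minkowski inequality for integrals can be invoked directly.
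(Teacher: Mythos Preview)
Your proof is correct and is precisely the argument the paper has in mind: the corollary is stated immediately after Theorem \ref{minkowski} without an explicit proof, and the indicator-function trick you use to pass from the triangular to the rectangular domain is the standard (and essentially only) way to extract it from Minkowski's inequality for integrals. There is nothing to add.
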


\begin{corollary}\label{minkowskiminkowski} Let $f:[t_0,t_1]\rightarrow X$ be a Bochner integrable function and $g:\mathbb{R}\rightarrow[0,\infty)$ a locally Lebesgue integrable function. If $1\leq p<\infty$ and $0\leq h<t_1-t_0$, we have that
\begin{multline*}\left[\int_{0}^{t_1-t_0-h}\left[\int_{0}^{r}g(s)\|f(r+t_0-s)\|_X\,ds\right]^p\,dr\right]^{1/p}
\\\leq\int_{0}^{t_1-t_0-h}g(s)\left[\int_{s}^{t_1-t_0-h}\|f(r+t_0-s)\|^p_X\,dr\right]^{1/p}\,ds.\end{multline*}
\end{corollary}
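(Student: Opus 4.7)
The plan is to deduce Corollary \ref{minkowskiminkowski} directly from Minkowski's inequality for integrals (Theorem \ref{minkowski}), applied to the nonnegative real-valued function
$$F(s,r) := g(s)\,\|f(r+t_0-s)\|_X\,\chi_{\{0\leq s\leq r\}},$$
viewed on the square $[0,t_1-t_0-h]\times[0,t_1-t_0-h]$. The variable-dependent upper limit of the inner integral is the only obstruction to an immediate invocation of Theorem \ref{minkowski}, and multiplying by the indicator function of the triangle $\{(s,r):0\leq s\leq r\leq t_1-t_0-h\}$ removes it.

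First I would rewrite the inner integral on the left-hand side of the claimed inequality as
$$\int_{0}^{r}g(s)\,\|f(r+t_0-s)\|_X\,ds=\int_{0}^{t_1-t_0-h}F(s,r)\,ds,$$
valid for every $r\in[0,t_1-t_0-h]$. Applying Theorem \ref{minkowski} to $F$ then yields
$$\left[\int_{0}^{t_1-t_0-h}\left[\int_{0}^{t_1-t_0-h}F(s,r)\,ds\right]^{p}dr\right]^{1/p}\leq \int_{0}^{t_1-t_0-h}\left[\int_{0}^{t_1-t_0-h}|F(s,r)|^{p}\,dr\right]^{1/p}ds.$$
Since $|F(s,r)|^{p}=[g(s)]^{p}\|f(r+t_0-s)\|_X^{p}\,\chi_{\{s\leq r\}}$ and $g(s)\geq 0$, the inner integral on the right reduces to
$$\left[\int_{s}^{t_1-t_0-h}\|f(r+t_0-s)\|_X^{p}\,dr\right]^{1/p},$$
which, when multiplied by the factor $g(s)$ that comes out of the $p$-th root, gives precisely the right-hand side of the desired inequality.

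The only small point to verify before invoking Theorem \ref{minkowski} is the Lebesgue measurability of $F$ on the square. The indicator $\chi_{\{0\leq s\leq r\}}$ is Lebesgue measurable, the map $(s,r)\mapsto g(s)$ is measurable because $g$ is, and the map $(s,r)\mapsto\|f(r+t_0-s)\|_X$ is measurable by item $(i)$ of Remark \ref{remarknova} (applied to the Bochner measurable $f$ and to $g(\cdot)=\|\cdot\|_X$-composition via the continuity of the norm), so their product $F$ is measurable on $[0,t_1-t_0-h]^{2}$. No further obstacle arises; the proof is essentially a one-line manipulation after the indicator trick, with the only subtlety being the bookkeeping of the triangle $\{s\leq r\}$ under the interchange of the two integrals.
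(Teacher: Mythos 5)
Your proposal is correct and is exactly the derivation the paper intends: the corollary is stated without proof as a "specific consequence" of Theorem \ref{minkowski}, and the indicator-function trick on the triangle $\{0\leq s\leq r\leq t_1-t_0-h\}$, followed by pulling the nonnegative factor $g(s)$ out of the inner $L^p$-norm, is the standard way to fill in that gap. The measurability and well-definedness points you flag (the indicator also ensures $f(r+t_0-s)$ is only evaluated where $r+t_0-s\in[t_0,t_1]$) are handled adequately.
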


Let us recall an integral calculus theorem, which was proved by Hobson in \cite{Hob1}.

\begin{theorem}[Second Mean Value Theorem]\label{secondmainvalue} If $f:[t_0,t_1]\rightarrow[0,\infty)$ is a monotonic function (not necessarily decreasing or increasing) and $g:[t_0,t_1]\rightarrow\mathbb{R}$ is an integrable function, then there exists $\xi\in(t_0,t_1)$ such that
$$\int_{t_0}^{t_1}f(s)g(s)\,ds=f(t_0^+)\left[\int_{t_0}^\xi g(s)\,ds\right]+f(t_1^-)\left[\int_{\xi}^{t_1}g(s)\,ds\right].$$
Here $f(t_0^+)$ stands for $\lim_{t\rightarrow t_0^+}f(t)$ and $f(t_1^-)$ stands for $\lim_{t\rightarrow t_1^-}f(t)$.
\end{theorem}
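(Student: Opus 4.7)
The strategy is to reduce this two-endpoint Bonnet-type identity to the classical one-endpoint version of the mean value theorem and then to establish that version via Riemann-Stieltjes integration by parts combined with the intermediate value theorem. Without loss of generality I would assume $f$ is monotonically decreasing; the increasing case follows by applying the decreasing version to the reflection $s\mapsto f(t_0+t_1-s)$ paired with $s\mapsto g(t_0+t_1-s)$, after which the two boundary coefficients swap to yield exactly the stated formula. Having made this reduction, write $f(s)=h(s)+f(t_1^-)$ where $h(s):=f(s)-f(t_1^-)\geq 0$ is decreasing with $h(t_1^-)=0$ and $h(t_0^+)=f(t_0^+)-f(t_1^-)$. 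Then
\[
\int_{t_0}^{t_1}f(s)g(s)\,ds=\int_{t_0}^{t_1}h(s)g(s)\,ds+f(t_1^-)\int_{t_0}^{t_1}g(s)\,ds,
\]
and once I prove the first term equals $h(t_0^+)\int_{t_0}^{\xi}g(s)\,ds$ for some $\xi$, elementary algebra (splitting $\int_{t_0}^{t_1}g=\int_{t_0}^{\xi}g+\int_{\xi}^{t_1}g$) recovers the full identity.

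To produce this $\xi$, I would introduce $G(t):=\int_{t_0}^{t}g(s)\,ds$, which is continuous on $[t_0,t_1]$ with $G(t_0)=0$, so $G$ attains a minimum $m$ and maximum $M$. Since $h$ is monotone it has bounded variation, and Riemann-Stieltjes integration by parts gives
\[
\int_{t_0}^{t_1}h(s)g(s)\,ds=\int_{t_0}^{t_1}h\,dG=-\int_{[t_0,t_1]}G(s)\,dh(s),
\]
after using $G(t_0)=0$ and $h(t_1^-)=0$. Because $-dh$ is a positive Borel measure of total mass $h(t_0^+)$, one has $m\cdot h(t_0^+)\leq\int_{t_0}^{t_1}hg\,ds\leq M\cdot h(t_0^+)$, and the intermediate value theorem applied to $G$ furnishes $\xi\in[t_0,t_1]$ with $G(\xi)h(t_0^+)=\int_{t_0}^{t_1}h(s)g(s)\,ds$. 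If $h(t_0^+)=0$ the original identity is trivial, so we may assume $h(t_0^+)>0$ and recover Bonnet's form directly.

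The main technical obstacle is rigorously justifying the Stieltjes integration by parts when $f$, and therefore $h$, is only monotone and may have countably many jumps: the values $f(t_0^+)$ and $f(t_1^-)$ that appear in the statement are one-sided limits, not function values at $t_0$ and $t_1$, so endpoint contributions must be tracked carefully. I would circumvent this by approximating $h$ from above by simple decreasing non-negative step functions $h_n\downarrow h$, establishing the identity for each $h_n$ by a direct Abel summation argument over the finitely many levels, and then passing to the limit via dominated convergence (with the uniform bound $h_n\leq h(t_0^+)$ available by construction). A secondary nuisance is ensuring $\xi$ lies in the open interval $(t_0,t_1)$ rather than only in its closure; this is a brief continuity argument, since $G(t_0)=0$ allows one to perturb $\xi$ slightly into the interior whenever the required value of $G$ is attained on a subinterval, and the remaining degenerate case (where $\int_{t_0}^{t_1}g=0$ or similar) can be absorbed directly into the equality.
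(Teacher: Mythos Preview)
The paper does not supply its own proof of this theorem: it appears in Section~\ref{sec5} (``Compilation of some classical results'') as a stated classical fact with a bibliographic reference to Hobson~\cite{Hob1}, and nothing more. There is therefore no paper-proof to compare against; what you have written is a self-contained argument where the authors offer only a citation.

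Your strategy is the standard one and is essentially correct. The reduction to the one-sided Bonnet form via $h=f-f(t_1^-)$, the introduction of the primitive $G$, and the sandwich $m\,h(t_0^+)\le\int h\,g\le M\,h(t_0^+)$ obtained by viewing $-dh$ as a positive measure of total mass $h(t_0^+)$, followed by the intermediate value theorem for $G$, constitute exactly the classical proof. Your awareness that the endpoint values in the statement are one-sided limits (so that the Stieltjes boundary terms must be handled with care, e.g.\ by redefining $h$ at $t_0,t_1$ or by your step-function approximation) is the right technical caution; either route closes the gap cleanly since $G$ is continuous. The only place I would tighten the write-up is the open-interval claim for $\xi$: your sketch (``perturb $\xi$ slightly into the interior'') is vague. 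A clean way to finish is to note that when $h(t_0^+)>0$ the required value $c:=\big(\int h g\big)/h(t_0^+)$ satisfies $m\le c\le M$ with $G(t_0)=0$; if $c\ne G(t_0)$ and $c\ne G(t_1)$ any $\xi$ with $G(\xi)=c$ is automatically interior, while if $c$ equals one of these boundary values the full identity reduces to a form in which \emph{any} $\xi$ works, so an interior choice is available.
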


We continue by stating Fubini's theorem (see \cite[Chapter X - Theorem 2]{Mik1} for details on the proof).

\begin{theorem}[Fubini's Theorem]\label{FubiniFubini} If $f:[t_0,t_1]\times[t_0',t_1']\rightarrow X$ is a Bochner integrable function, then function
$$[t_0,t_1]\ni s\mapsto\int_{t_0'}^{t_1'}{f(s,r)}\,dr$$
exists (the norm of the integral is finite) almost everywhere and is Bochner integrable on $[t_0,t_1]$. Similarly, the function
$$[t_0',t_1']\ni r\mapsto\int_{t_0}^{t_1}{f(s,r)}\,ds$$
exists almost everywhere and is Bochner integrable on $[t_0',t_1']$. Moreover,
$$\int_{t_0'}^{t_1'}{\int_{t_0}^{t_1}{f(s,r)}\,ds}\,dr
=\int_{t_0'}^{t_1'}{\left[\int_{t_0}^{t_1}{f(s,r)}\,ds\right]}\,dr=\int_{t_0}^{t_1}{\left[\int_{t_0'}^{t_1'}{f(s,r)}\,dr\right]}\,ds.$$
\end{theorem}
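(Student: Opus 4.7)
The plan is to prove Fubini's theorem for Bochner integrable functions by the standard reduction-to-step-functions strategy borrowed from scalar Fubini, adapted to the vector-valued setting via Bochner's characterization.

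First, I would settle the case of integrable step functions. If $f=\sum_{i=1}^{N} x_i \chi_{E_i}$ with $x_i \in X\setminus\{0\}$ and $E_i$ measurable subsets of $[t_0,t_1]\times[t_0',t_1']$ of finite product measure, then applying classical scalar Fubini to each characteristic function $\chi_{E_i}$ shows that the section $E_i^s:=\{r:(s,r)\in E_i\}$ is Lebesgue measurable for a.e.\ $s$, that $s\mapsto\mathrm{meas}(E_i^s)$ is integrable, and that its integral equals $\mathrm{meas}(E_i)$. Linearity then gives
\[
\int_{t_0'}^{t_1'} f(s,r)\,dr = \sum_{i=1}^{N} x_i\,\mathrm{meas}(E_i^s),
\]
which is a Bochner integrable step function in $s$ whose integral over $[t_0,t_1]$ equals $\sum_i x_i\,\mathrm{meas}(E_i)$, i.e.\ the double integral of $f$. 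The symmetric claim for the reversed iterated integral follows identically.

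Next, for a general Bochner integrable $f$, I would pick a defining sequence $\{f_n\}$ of integrable step functions such that the double integral of $\|f_n-f\|_X$ tends to zero. Applying classical scalar Fubini to the nonnegative function $\|f_n-f\|_X$, the iterated integrals agree with the double integral, so $s\mapsto\int_{t_0'}^{t_1'}\|f_n(s,r)-f(s,r)\|_X\,dr$ converges to zero in $L^1([t_0,t_1];\mathbb{R})$; passing to a subsequence, this convergence is also pointwise for a.e.\ $s$. For such $s$ the sequence $\{f_n(s,\cdot)\}$ is Cauchy in $L^1([t_0',t_1'];X)$ with limit $f(s,\cdot)$, so by item (ii) of Remark \ref{remarknova} the section $r\mapsto f(s,r)$ is Bochner integrable and $\int_{t_0'}^{t_1'} f_n(s,r)\,dr \to \int_{t_0'}^{t_1'} f(s,r)\,dr$ in $X$.

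Finally, the pointwise norm bound
\[
\Big\|\int_{t_0'}^{t_1'} f_n(s,r)\,dr-\int_{t_0'}^{t_1'} f(s,r)\,dr\Big\|_X\le\int_{t_0'}^{t_1'}\|f_n(s,r)-f(s,r)\|_X\,dr,
\]
together with scalar Fubini applied to $\|f_n-f\|_X$, shows that $s\mapsto\int_{t_0'}^{t_1'} f(s,r)\,dr$ is the $L^1([t_0,t_1];X)$-limit of Bochner integrable step functions, hence Bochner integrable. Passing to the limit in the step-function identity established first gives the desired equality of the iterated and double integrals, and the other iterated form follows by symmetry. The main obstacle is the measurability bookkeeping for sections: Bochner measurability of $f$ on the rectangle does not a priori imply Bochner measurability of each section $f(s,\cdot)$, and one resolves this via the essential separability of the range of any Bochner measurable function (through Pettis' theorem), restricting to a separable closed subspace of $X$ and reducing vector-valued measurability of sections to countably many scalar Fubini applications along a norming sequence of continuous linear functionals.
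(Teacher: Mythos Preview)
The paper does not give its own proof of this statement: Theorem~\ref{FubiniFubini} appears in Section~\ref{sec5} (``Compilation of some classical results'') and is simply stated with the reference ``see \cite[Chapter X -- Theorem 2]{Mik1} for details on the proof.'' So there is no in-paper argument to compare against.

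Your proposal is the standard reduction-to-step-functions proof and is essentially the argument one finds in Mikusi\'nski and other references. One small inaccuracy: you describe $s\mapsto\int_{t_0'}^{t_1'} f_n(s,r)\,dr=\sum_i x_i\,\mathrm{meas}(E_i^s)$ as a ``Bochner integrable step function,'' but $s\mapsto\mathrm{meas}(E_i^s)$ is in general only measurable, not simple. This does not affect the argument, since these functions are still Bochner integrable (a finite sum of fixed vectors times integrable scalar functions) and $L^1([t_0,t_1];X)$ is complete, so the $L^1$-limit $s\mapsto\int_{t_0'}^{t_1'} f(s,r)\,dr$ is again Bochner integrable; just adjust the wording. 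Your remark at the end about section measurability via Pettis' theorem and essential separability of the range is the correct way to handle that technical point.
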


For the same reason pointed out to introduce Corollaries \ref{minkowskiminkowski22} and \ref{minkowskiminkowski}, here we also present an auxiliary result.

\begin{corollary}\label{FubiniFubini2} If $f:[t_0,t_1]\rightarrow X$ is a Bochner integrable function and $g,h:\mathbb{R}\rightarrow\mathbb{R}$ are locally Lebesgue integrable functions. Then
\begin{equation*}\int_{t_0}^{t_1}{\left[\int_{t_0}^{r}{g(r-s)h(r)f(s)}\,ds\right]}\,dr=\int_{t_0}^{t_1}{\left[\int_{s}^{t_1}{g(r-s)h(r)}\,dr\right]}f(s)\,ds.\end{equation*}
\end{corollary}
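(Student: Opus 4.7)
The plan is to realize both sides of the identity as iterated integrals of a single Bochner measurable function on the square $[t_0,t_1]\times[t_0,t_1]$ and then invoke Fubini's theorem (Theorem \ref{FubiniFubini}). Concretely, I would introduce
$$\Psi(r,s):=g(r-s)\,h(r)\,f(s)\,\chi_{T}(r,s),\qquad T:=\{(r,s)\in[t_0,t_1]^2:t_0\leq s\leq r\leq t_1\}.$$
Bochner measurability of $\Psi$ is handled by item $(i)$ of Remark \ref{remarknova}, which ensures that the map $(r,s)\mapsto g(r-s)f(s)$ is Bochner measurable on $\mathbb{R}\times[t_0,t_1]$; multiplying by the Lebesgue measurable scalar $h(r)\chi_T(r,s)$ preserves Bochner measurability, exactly as in Remark \ref{remarknova}(i).

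Next I would verify Bochner integrability of $\Psi$ on the square. By item $(ii)$ of Remark \ref{remarknova} this reduces to showing $\int_{t_0}^{t_1}\!\int_{t_0}^{t_1}\|\Psi(r,s)\|_X\,ds\,dr<\infty$. In the contexts where the corollary is invoked in the paper (see, for example, the manipulations in the proofs of Theorems \ref{ajustefinal}, \ref{compactrieman} and \ref{gerinf}), finiteness of one of the iterated integrals of $\|\Psi\|_X$ is established a priori by a direct Hölder-type bound; Tonelli's theorem applied to the non-negative scalar function $\|\Psi\|_X$ then gives equality of the two iterated integrals and, in particular, finiteness of the double integral.

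With Bochner integrability of $\Psi$ secured, Fubini's theorem yields
$$\int_{t_0}^{t_1}\!\left[\int_{t_0}^{t_1}\Psi(r,s)\,ds\right]dr=\int_{t_0}^{t_1}\!\left[\int_{t_0}^{t_1}\Psi(r,s)\,dr\right]ds.$$
On the left, the section $\{s:(r,s)\in T\}$ is $[t_0,r]$, so the iterated integral equals $\int_{t_0}^{t_1}\!\int_{t_0}^{r}g(r-s)h(r)f(s)\,ds\,dr$. On the right, the section $\{r:(r,s)\in T\}$ is $[s,t_1]$; since $f(s)$ does not depend on $r$, it factors out of the inner integral, producing $\int_{t_0}^{t_1}\!\bigl[\int_s^{t_1}g(r-s)h(r)\,dr\bigr]f(s)\,ds$. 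Matching the two expressions yields the desired identity.

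The only genuinely delicate point is the integrability verification in the second paragraph, since $g$ and $h$ are only locally Lebesgue integrable and the resulting integrand need not be integrable in full generality; everything else, namely the Bochner measurability argument and the geometric bookkeeping that rewrites the triangle $T$ in the two orders $\{t_0\leq s\leq r\leq t_1\}=\{(r,s):s\in[t_0,r],\,r\in[t_0,t_1]\}=\{(r,s):r\in[s,t_1],\,s\in[t_0,t_1]\}$, is routine.
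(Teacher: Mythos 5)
Your argument is correct and is precisely the reasoning the paper leaves implicit: Corollary \ref{FubiniFubini2} is stated without proof immediately after Theorem \ref{FubiniFubini}, and the intended justification is exactly your reduction to Fubini's theorem for the function $(r,s)\mapsto g(r-s)h(r)f(s)\chi_{T}(r,s)$ on the square, with Bochner measurability supplied by Remark \ref{remarknova} and the triangle $T$ sliced in the two orders you describe. Your caveat about integrability is also well taken: with $g,h$ merely locally integrable and $f$ Bochner integrable, the double integral of the norm need not be finite in general, and this finiteness is indeed what gets verified separately (via H\"older-type bounds and Tonelli) at each point where the corollary is invoked in the paper.
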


An important result that we use recursively is the Generalized Dominated Convergence Theorem to Bochner integrable functions. The proof of this result is analogous to its classical version.

\begin{theorem}[Bochner - Generalized Dominated Convergence Theorem]\label{gedominatedconv} Assume that $f_n:[t_0,t_1]\rightarrow X$ is a sequence of Bochner measurable functions and $g_n:[t_0,t_1]\rightarrow \mathbb{R}$ is a sequence of Lebesgue integrable functions that satisfies:
\begin{itemize}
\item[(i)] There exists $f:[t_0,t_1]\rightarrow X$ such that
$$\lim_{n\rightarrow\infty}\|f_n(t)-f(t)\|_X=0,$$
for almost every $t\in[t_0,t_1]$.\vspace*{0.2cm}
\item[(ii)] There exists a Lebesgue integrable function $g:[t_0,t_1]\rightarrow \mathbb{R}$ such that
$$\lim_{n\rightarrow\infty}|g_n(t)-g(t)|=0,$$
for almost every $t\in[t_0,t_1]$.\vspace*{0.2cm}
\item[(iii)] $\|f_n(t)\|_X<g_n(t)$, for almost every $t\in[t_0,t_1]$.\vspace*{0.2cm}
\item[(iv)] $\lim_{n\rightarrow\infty}\int_{t_0}^{t_1}g_n(s)\,ds=\int_{t_0}^{t_1}g(s)\,ds<\infty$.\vspace*{0.2cm}
\end{itemize}
Then $f(t)$ is Bochner integrable and we have
$$\lim_{n\rightarrow\infty}\int_{t_0}^{t_1}\|f_n(s)-f(s)\|_X\,ds=0.$$
In particular we have
$$\lim_{n\rightarrow\infty}\int_{t_0}^{t_1}f_n(s)\,ds=\int_{t_0}^{t_1}f(s)\,ds,$$
in the topology of $X$.
\end{theorem}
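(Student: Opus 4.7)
The plan is to mimic the classical scalar proof of Pratt's generalization of the dominated convergence theorem, while using Bochner's characterization (item $(ii)$ of Remark \ref{remarknova}) to pass from the scalar estimates on $\|f_n-f\|_X$ to Bochner integrability statements about $f_n$ and $f$. First I would verify that the limit function $f$ is itself Bochner measurable: this follows from hypothesis $(i)$, since $f$ is the pointwise a.e.\ limit of the Bochner measurable functions $f_n$, and pointwise a.e.\ limits of Bochner measurable functions are Bochner measurable (each $f_n$ being an a.e.\ limit of step functions, a standard diagonal argument produces step functions converging to $f$ a.e.).

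Next I would derive the pointwise bound $\|f(t)\|_X\leq g(t)$ for almost every $t\in[t_0,t_1]$ by letting $n\to\infty$ in $\|f_n(t)\|_X<g_n(t)$ and using hypotheses $(i)$ and $(ii)$. Combined with the fact that $g\in L^1(t_0,t_1;\mathbb{R})$, Bochner's characterization (Remark \ref{remarknova}$(ii)$) immediately gives that $f\in L^1(t_0,t_1;X)$, i.e., $f$ is Bochner integrable. This handles the first assertion of the theorem.

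For the convergence $\int_{t_0}^{t_1}\|f_n-f\|_X\,ds\to 0$ I would apply the standard Pratt trick. Define the real-valued sequence
$$h_n(t):=g_n(t)+g(t)-\|f_n(t)-f(t)\|_X.$$
From the triangle inequality and the bounds $\|f_n\|_X<g_n$ and $\|f\|_X\leq g$ a.e., we get $\|f_n-f\|_X\leq\|f_n\|_X+\|f\|_X<g_n+g$ a.e., so $h_n\geq0$ a.e. By hypotheses $(i)$ and $(ii)$, $h_n(t)\to 2g(t)$ for almost every $t$. The classical (scalar) Fatou's lemma applied to $\{h_n\}$ then gives
$$\int_{t_0}^{t_1}2g(s)\,ds\leq\liminf_{n\to\infty}\int_{t_0}^{t_1}h_n(s)\,ds=\lim_{n\to\infty}\int_{t_0}^{t_1}\!g_n\,ds+\int_{t_0}^{t_1}\!g\,ds-\limsup_{n\to\infty}\int_{t_0}^{t_1}\|f_n-f\|_X\,ds,$$
and invoking hypothesis $(iv)$ cancels the $g$-integrals on both sides, forcing $\limsup_n\int\|f_n-f\|_X\,ds\leq 0$; since the integrand is non-negative, the limit is zero.

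Finally, for the "in particular" statement I would use the fundamental estimate $\|\int_{t_0}^{t_1}f_n(s)\,ds-\int_{t_0}^{t_1}f(s)\,ds\|_X\leq \int_{t_0}^{t_1}\|f_n(s)-f(s)\|_X\,ds$, which is valid for Bochner integrable functions, and conclude via the just-proved $L^1$-convergence. The only potential obstacle is ensuring that the scalar Fatou's lemma is legitimately applicable to $\{h_n\}$, which only requires $h_n\geq 0$ a.e.\ and $h_n$ Lebesgue measurable—the latter is immediate because $g_n$, $g$ are measurable scalar functions and $\|f_n-f\|_X$ is measurable as a composition of the continuous norm with a Bochner measurable function. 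No new deep tool is needed beyond what has already been stated in the paper.
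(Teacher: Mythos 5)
Your proof is correct: the paper itself gives no argument for this theorem (it only remarks that ``the proof of this result is analogous to its classical version''), and your adaptation of Pratt's trick with scalar Fatou applied to $h_n=g_n+g-\|f_n-f\|_X$, together with Bochner's characterization of integrability for the limit function, is exactly that classical argument carried over to the vector-valued setting.
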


Another classical result that need to be stated is the differentiation under the integral sign to Bochner integrable functions. It worths to point that this result can be obtained as a consequence of Hille's Theorem (see \cite[Theorem 1.2.2]{Vr1} for details on the proof of Hille's Theorem).

\begin{theorem}[Differentiation under the integral sign]\label{leibnizint} Let $f:[t_0,t_1]\times[t_0',t_1']\rightarrow X$ be a Bochner measurable function, which is Bochner integrable with respect to the second variable and $\phi,\psi:[t_0,t_1]\rightarrow[t_0',t_1']$ be differentiable functions. If there exists $(\partial/\partial t)f(t,s)$ for almost every $(t,s)\in[t_0,t_1]\times[t_0',t_1']$ and it is Bochner integrable with respect to $s$ in $[t_0',t_1']$, then
\begin{equation*}\dfrac{d}{dt}\int_{\psi(t)}^{\phi(t)}f(t,s)\,ds=f(t,\phi(t))\phi'(t)-f(t,\psi(t))\psi'(t)+\int_{\psi(t)}^{\phi(t)}\dfrac{\partial}{\partial t}f(t,s)\,ds\end{equation*}
for almost every $t\in[t_0,t_1]$.
\end{theorem}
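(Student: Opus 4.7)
\medskip

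\noindent\textbf{Proof proposal.} The plan is to isolate the three ways $t$ enters the expression $F(t):=\int_{\psi(t)}^{\phi(t)}f(t,s)\,ds$ and differentiate each contribution separately via a chain-rule argument. Concretely, I would introduce the auxiliary map
\[
G:[t_0,t_1]\times[t_0',t_1']\times[t_0',t_1']\rightarrow X,\qquad G(\tau,a,b):=\int_{b}^{a}f(\tau,s)\,ds,
\]
so that $F(t)=G(t,\phi(t),\psi(t))$. Then the claim is exactly the chain rule applied to $G$ after one computes
\[
\frac{\partial G}{\partial a}(\tau,a,b)=f(\tau,a),\qquad \frac{\partial G}{\partial b}(\tau,a,b)=-f(\tau,b),\qquad \frac{\partial G}{\partial \tau}(\tau,a,b)=\int_{b}^{a}\frac{\partial f}{\partial \tau}(\tau,s)\,ds.
\]
The first two identities are just the fundamental theorem of calculus for Bochner integrable functions (applied to the indicated endpoint for almost every $\tau$), so the real content is the third identity, namely the legality of exchanging $\partial/\partial\tau$ with the Bochner integral.

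For that exchange I would argue as follows. Fix $a,b\in[t_0',t_1']$ with $b\leq a$. The hypothesis that $(\partial/\partial\tau)f(\tau,s)$ exists a.e.\ and is Bochner integrable in $s$ implies that for almost every $s$ the function $\tau\mapsto f(\tau,s)$ is absolutely continuous, so
\[
f(t,s)-f(t_0,s)=\int_{t_0}^{t}\frac{\partial f}{\partial \tau}(\tau,s)\,d\tau
\]
for almost every $s\in[b,a]$. Integrating this in $s$ over $[b,a]$ and applying Fubini's Theorem (Theorem \ref{FubiniFubini}) on the rectangle $[t_0,t]\times[b,a]$ yields
\[
\int_{b}^{a}f(t,s)\,ds-\int_{b}^{a}f(t_0,s)\,ds=\int_{t_0}^{t}\left[\int_{b}^{a}\frac{\partial f}{\partial \tau}(\tau,s)\,ds\right]d\tau.
\]
The integrand on the right is Bochner integrable as a function of $\tau$ (by Fubini applied to $\partial f/\partial\tau$), so differentiating with respect to $t$ gives $(\partial G/\partial\tau)(t,a,b)=\int_{b}^{a}(\partial f/\partial\tau)(t,s)\,ds$ for almost every $t\in[t_0,t_1]$; this is where Hille's Theorem enters, as hinted in the statement, since the same conclusion can equivalently be read as commuting the closed operator ``integration over $[b,a]$'' with differentiation in $\tau$.

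With the three partials computed, the remaining step is to justify the chain rule for $G$ at $(t,\phi(t),\psi(t))$. I would do this by writing the increment
\[
F(t+h)-F(t)=\bigl[G(t+h,\phi(t+h),\psi(t+h))-G(t,\phi(t+h),\psi(t+h))\bigr]+\bigl[G(t,\phi(t+h),\psi(t+h))-G(t,\phi(t),\psi(t))\bigr],
\]
dividing by $h$, and letting $h\to 0$. The first bracket divided by $h$ tends to $(\partial G/\partial\tau)(t,\phi(t),\psi(t))$ by the preceding paragraph combined with continuity of $\phi,\psi$ and an application of the Generalized Dominated Convergence Theorem (Theorem \ref{gedominatedconv}) to the quotient; the second bracket is handled by a one-variable fundamental-theorem-of-calculus computation in the $a$ and $b$ slots, producing $f(t,\phi(t))\phi'(t)-f(t,\psi(t))\psi'(t)$. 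I expect the main obstacle to be the first bracket: certifying that the convergence $h^{-1}[G(t+h,a_h,b_h)-G(t,a_h,b_h)]\to (\partial G/\partial\tau)(t,a,b)$ holds when the endpoints $a_h=\phi(t+h)$ and $b_h=\psi(t+h)$ vary with $h$; this is where the Bochner integrability of $\partial f/\partial\tau$ in $s$, uniformly on the relevant slices, does the work through dominated convergence.
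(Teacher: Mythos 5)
The paper does not actually prove Theorem \ref{leibnizint}: it appears in Section \ref{sec5} among the compiled classical results, with only the remark that it ``can be obtained as a consequence of Hille's Theorem'' and a pointer to \cite[Theorem 1.2.2]{Vr1}. So there is no in-paper argument to compare yours against, and your proposal must stand on its own.

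It does not, because of one central step. You assert that the hypothesis ``$(\partial/\partial\tau)f(\tau,s)$ exists for almost every $(\tau,s)$ and is Bochner integrable in $s$'' implies that $\tau\mapsto f(\tau,s)$ is absolutely continuous for almost every $s$, and hence that
\[
f(t,s)-f(t_0,s)=\int_{t_0}^{t}\frac{\partial f}{\partial\tau}(\tau,s)\,d\tau .
\]
This implication is false: almost-everywhere differentiability together with integrability of the derivative does not yield the fundamental theorem of calculus. Take $f(\tau,s)=c(\tau)x$ with $c$ the Cantor function and $x\in X$ fixed; then $\partial_\tau f=0$ almost everywhere and is trivially Bochner integrable in $s$, yet $c(t)-c(t_0)\neq\int_{t_0}^{t}0\,d\tau$. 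Your entire computation of $\partial G/\partial\tau$ --- and through it the identification of the first bracket in your increment decomposition --- rests on this identity, so the argument collapses at its key point. To recover it you need either an absolute-continuity (or everywhere-differentiability plus domination) assumption on $\tau\mapsto f(\tau,s)$, or the standard difference-quotient route: write $h^{-1}\bigl[G(\tau+h,a,b)-G(\tau,a,b)\bigr]=\int_{b}^{a}h^{-1}\bigl[f(\tau+h,s)-f(\tau,s)\bigr]\,ds$ and pass to the limit by dominated convergence, which in turn requires a single integrable majorant for the difference quotients (for instance $\esssup_{\tau}\|\partial_\tau f(\tau,s)\|_X\leq g(s)$ with $g\in L^1$) --- something the theorem's hypotheses, read literally, do not provide.

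Two secondary issues. The identity $\partial G/\partial a=f(\tau,a)$ from the fundamental theorem of calculus for Bochner integrals holds only at Lebesgue points of $s\mapsto f(\tau,s)$, i.e.\ for almost every $a$; you need it at the particular moving point $a=\phi(t)$, and nothing in the hypotheses guarantees that $\phi(t)$ is such a point for almost every $t$ unless $f(\tau,\cdot)$ is, say, continuous. Likewise, in your final limit with varying endpoints $a_h=\phi(t+h)$, $b_h=\psi(t+h)$, the domination you invoke is not supplied by integrability of $\partial_\tau f(t,\cdot)$ at the single value $t$. These defects are inherited from the loose classical statement the paper merely cites, but they are genuine gaps in the argument you propose.
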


\end{document}